\documentclass[11pt,leqno]{amsart}

\usepackage{graphicx}
\graphicspath{ {./images/} }

\usepackage[dvipsnames]{xcolor}
\definecolor{b}{HTML}{4472c4}
\definecolor{o}{HTML}{ED7D31}
\definecolor{g}{HTML}{70ad47}

\usepackage{fullpage}
\usepackage{amsmath,amsthm,amssymb,latexsym,color}
\usepackage[hidelinks,unicode,linktocpage=true]{hyperref}
\usepackage{lipsum}
\usepackage{enumitem,bbm}
\usepackage{hyperref}
\hypersetup{
    colorlinks=true,
    linkcolor=blue,
    filecolor=blue,
    citecolor=blue,
    urlcolor=cyan}

\usepackage{tikz}
\usepackage{subfigure}

\usepackage{multirow}


\linespread{1.1}

\thispagestyle{empty}

\newtheorem{theorem}{Theorem}[section]

\newtheorem{lemma}[theorem]{Lemma}
\newtheorem{corollary}[theorem]{Corollary}
\newtheorem{definition}[theorem]{Definition}
\newtheorem{proposition}[theorem]{Proposition}
\newtheorem{fact}[theorem]{Fact}
\newtheorem{claim}[theorem]{Claim}

\newtheorem{problem}[theorem]{Problem}

\theoremstyle{remark}
\newtheorem{remark}[theorem]{Remark}

\newtheorem{convention}[theorem]{Convention}

\theoremstyle{plain}


\newcommand{\Z}{\mathbb{Z}}
\newcommand{\R}{\mathbb{R}}

\newcommand{\Exp}{\mathbb{E}}

\def\Prob{{\mathbb P}}

\newcommand{\supp}{{\rm supp\,}}

\def\dist{{\rm dist}}

\def\dist{{\rm dist}}


\newcommand{\eps}{\epsilon}


\newcommand{\seminorm}[1]{{\left\vert\kern-0.25ex\left\vert\kern-0.25ex\left\vert #1
    \right\vert\kern-0.25ex\right\vert\kern-0.25ex\right\vert}}

\setcounter{tocdepth}{1}

\begin{document}

\title[A combinatorial approach to nonlinear spectral gaps]{A combinatorial approach to nonlinear spectral gaps}

\author{Dylan J. Altschuler, Pandelis Dodos, Konstantin Tikhomirov and Konstantinos Tyros}

\address{Department of Mathematical Sciences, Carnegie Mellon University}
\email{daltschu@andrew.cmu.edu}

\address{Department of Mathematics, University of Athens, Panepistimiopolis 157 84, Athens, Greece}
\email{pdodos@math.uoa.gr}

\address{Department of Mathematical Sciences, Carnegie Mellon University}
\email{ktikhomi@andrew.cmu.edu}

\address{Department of Mathematics, University of Athens, Panepistimiopolis 157 84, Athens, Greece}
\email{ktyros@math.uoa.gr}

\thanks{2020 \textit{Mathematics Subject Classification}: 05C12, 05C48, 05C50, 05C80, 30L05, 46B07, 46B09, 46B85.}
\thanks{\textit{Key words}: random regular graphs, Poincar\'{e} inequalities, unconditional bases, cotype.}


\begin{abstract}

A seminal open question of Pisier and Mendel--Naor asks whether every degree-regular graph which satisfies the classical discrete Poincar\'e inequality for scalar functions, also satisfies an analogous inequality for functions taking values in \textit{any} normed space with non-trivial cotype. Motivated by applications, it is also greatly important to quantify the dependence of the corresponding optimal Poincar\'e constant on the cotype $q$. Works of Odell--Schlumprecht (1994), Ozawa (2004),
and Naor (2014) make substantial progress on the former question by providing a positive answer for normed spaces which also have an unconditional basis, in addition to finite cotype. However, little is known in the way of quantitative estimates: the mentioned results imply a bound on the Poincar\'e constant depending super-exponentially on $q$.

We introduce a novel combinatorial framework for proving quantitative nonlinear spectral gap estimates.
The centerpiece is a property of regular graphs that we call \textit{long range expansion},
which holds with high probability for random regular graphs. Our main result is that
any regular graph with the long-range expansion property satisfies a discrete Poincar\'{e}
inequality for any normed space with an unconditional basis and cotype $q$,
with a Poincar\'{e} constant that depends {\it polynomially} on~$q$, which is optimal.
As an application, any normed space with an unconditional basis which admits a low distortion embedding
of an $n$-vertex random regular graph, must have cotype at least polylogarithmic in $n$.
This extends a celebrated lower-bound of Matou\v{s}ek for low distortion embeddings of random graphs into $\ell_q$ spaces.

\end{abstract}

\maketitle

\tableofcontents

\numberwithin{equation}{section}

\section{Introduction} \label{sec1}

\subsection{Expander graphs, and discrete Poincar\'{e} inequalities}

Let $d\geqslant 3$ be an integer, let $G=(V_G,E_G)$ be a $d\text{-regular}$ graph, and set $n:=|V_G|$.
The adjacency matrix $A_G$ of\, $G$ is symmetric, its eigenvalues are real, and we write them as
\begin{equation} \label{eq-eigen1}
\lambda_n(G) \leqslant \cdots \leqslant \lambda_2(G) \leqslant \lambda_1(G)=d.
\end{equation}
We also set
\begin{equation} \label{eq-eigen2}
\lambda(G):=\max\big\{ |\lambda_2(G)|,\dots, |\lambda_n(G)|\big\}.
\end{equation}
The reciprocal of the spectral gap, and more precisely the quantity $\frac{d}{2(d-\lambda_2(G))}$, can be also defined as the
smallest constant $\gamma\in (0,\infty]$ such that for any $f\colon V_G\to \mathbb{R}$,
\begin{equation} \label{poincare-real}
\frac{1}{|V_G|^2} \sum_{v,w\in V_G} \big(f(v)-f(w)\big)^2 \leqslant \frac{\gamma}{|E_G|}
\sum_{\{v,w\}\in E_G} \big(f(v)-f(w)\big)^2.
\end{equation}
It is natural to generalize this discrete Poincar\'{e} inequality for functions taking values in normed spaces or, more generally,
in metric spaces. These generalizations have been studied extensively in metric geometry, nonlinear functional analysis
and, more recently, in theoretical computer science; see, e.g., \cite{ANNRW18a,Es22,Gr03,Ma97,MN14,MN15,Pi10} and
references therein.

In this paper we shall focus on extensions of \eqref{poincare-real} for functions taking values in normed spaces.
\begin{definition}[Nonlinear spectral gap]
Let $d\geqslant 3$ be an integer, and let $G=(V_G,E_G)$ be a $d\text{-regular}$ graph. Also let $X$ be a normed space,
and let $p\geqslant 1$. By $\gamma(G,\|\cdot\|_X^p)$ we denote the smallest constant $\gamma\in (0,\infty]$ such that
for any $f\colon V_G \to X$,
\begin{equation} \label{eq-poincare}
\frac{1}{|V_G|^2} \sum_{v,w\in V_G} \|f(v)-f(w)\|_X^p \leqslant \frac{\gamma}{|E_G|}
\sum_{\{v,w\}\in E_G} \|f(v)-f(w)\|_X^p.
\end{equation}
\end{definition}
Here, one is interested in obtaining estimates for $\gamma(G,\|\cdot\|_X^p)$ that depend on graph-parameters of $G$ and the
geometry of $X$, but \emph{not} on the size of the vertex set of $G$. To this end, it is clear that one has to assume
that the graph $G$ is an expander, that is, $G$ has a non-trivial spectral gap. It is also a standard observation that
the space $X$ must not contain isomorphic copies of $\ell_{\infty}^k$'s of large dimension $k$.
By a classical result of Maurey and Pisier \cite{MP76}, the latter property is quantified via the notion of cotype.
\begin{definition}[Cotype]
Let $q\geqslant 2$, and let\, $\mathrm{C}\geqslant 1$. We say that a normed space $X$ has
\emph{cotype $q$ with constant $\mathrm{C}$}
if for every choice $x_1,\dots,x_m$ of vectors in $X$,
\[ \Exp\,\Big\|\sum_{i=1}^m r_i x_i\Big\|_X^q \geqslant \frac{1}{\mathrm{C}^q}\sum_{i=1}^m \|x_i\|_X^q, \]
where $(r_i)_{i=1}^m$ are i.i.d. Rademacher random variables.
\end{definition}
The previous remarks can be summarized by saying that \eqref{eq-poincare}
can only hold true for regular expander graphs and for normed spaces with non-trivial cotype.
It is not known whether trivial cotype is the \emph{only} obstacle to the validity of \eqref{eq-poincare}.
\begin{problem}[Pisier/Mendel--Naor\footnote{Related problems have been asked by Linial;
see \cite[Paragraph 3.1]{Ma11}.}] \label{prob-pmn}
Is it true that every regular expander graph satisfies \eqref{eq-poincare}
for functions taking values in a normed space $X$ with non-trivial cotype?
\end{problem}
Besides its intrinsic theoretical interest, a positive answer to Problem \ref{prob-pmn}
will have profound consequences in analysis and geometry; we refer the reader to \cite[Section 1]{MN14}
and \cite[Section~6]{Es22} for a detailed discussion of the scope of Problem \ref{prob-pmn}.

At present, Problem \ref{prob-pmn} remains open. In fact, even the existence of a single sequence $(G_n)$
of regular expander graphs with $|V_{G_n}|\to\infty$ that satisfy \eqref{eq-poincare}
for all normed spaces with non-trivial cotype has not been established. The strongest result in this direction is due
to Lafforgue \cite{La08} who constructed, with algebraic methods, a sequence $(G_n)$
of regular expanders graphs with $|V_{G_n}|\to\infty$ and which satisfy \eqref{eq-poincare}
for all normed spaces with non-trivial {\it type}; further examples have been obtained in \cite{MN14,dLdS23}.

\subsubsection{Normed spaces with an unconditional basis}

Nonlinear spectral gaps are better understood (at least qualitatively) in the important setting of normed spaces with an unconditional basis, a standard form of symmetry that is frequently assumed in Banach space theory.
\begin{definition}[Unconditional bases]
Let $K\geqslant 1$. We say that a basis\footnote{Here, we implicitly assume that $X$ is finite-dimensional;
the infinite-dimensional case is defined similarly.} $(x_j)_{j=1}^k$ of a normed space~$X$ is \emph{$K$-unconditional}
if for every choice of scalars $a_1,\dots,a_k$,
\[ \max\bigg\{ \Big\|\sum_{j=1}^k \sigma_j a_j x_j \Big\|_X \colon \sigma_1,\dots,\sigma_k\in \{-1,1\}\bigg\}
\leqslant K \bigg\| \sum_{j=1}^k a_j x_j\bigg\|_X. \]
In particular, a basis $(x_j)_{j=1}^k$ is $1$-unconditional if for every choice of scalars $a_1,\dots,a_k$,
$$ \bigg\|\sum_{j=1}^k a_j x_j\bigg\|_X =\bigg\|\sum_{j=1}^k |a_j| x_j\bigg\|_X. $$
\end{definition}
The class of normed spaces with an unconditional basis and non-trivial cotype is quite extensive. It includes a variety of classical spaces, such as $\ell_q$ for all $q\geqslant 1$, and $L_q$ for all $q>1$, as well as more ``exotic" examples (e.g., \cite{FJ74}).

The Poincar\'{e} constant $\gamma(G,\|\cdot\|_X^p)$ for a normed space $X$ with an unconditional basis and non-trivial cotype
can be estimated using a  transfer argument that originates in the work of Ozawa~\cite{Oz04}.
More precisely, by combining a comparison result of nonlinear spectral gaps due to Naor \cite{Na14} together
with a nonlinear embedding due to Odell--Schlumprecht \cite{OS94}, one can show that if $G$ is
a $d$-regular graph ($d\geqslant 3$), then for any normed space $X$ with an $1$-unconditional basis that has cotype $q\geqslant 2$
with constant $\mathrm{C}\geqslant 1$, we have\footnote{For details concerning our asymptotic notation we refer to
Section \ref{sec2}.}
\begin{equation} \label{eq-os-intro-1}
\gamma(G,\|\cdot\|_X^2) \lesssim (q\mathrm{C})^{O(q)}\, \Big(\frac{d}{d-\lambda_2(G)}\Big)^8.
\end{equation}
Under the additional assumption that $X$ is $q$-concave with $M_{(q)}(X)=1$ (e.g., if $X$ is $\ell_q$), this estimate can be improved to
\begin{equation} \label{eq-os-intro-2}
\gamma(G,\|\cdot\|_X^2) \lesssim 2^{O(q)}\, \Big(\frac{d}{d-\lambda_2(G)}\Big)^8.
\end{equation}
We refer the reader to Appendix \ref{appendix} for information on the concepts not introduced at this point, as well as
for an exposition of the proofs of \eqref{eq-os-intro-1} and \eqref{eq-os-intro-2}.

The exponential dependence on the cotype in \eqref{eq-os-intro-1} and \eqref{eq-os-intro-2} is an inherent characteristic
of Ozawa's method. This has been analyzed in detail by Naor \cite{Na14} who also showed, by a completely
different argument, an optimal estimate for the case of $\ell_q$ spaces:
\[ \gamma(G,\|\cdot\|_{\ell_q}^2) \lesssim q^2\, \Big(\frac{d}{d-\lambda_2(G)}\Big), \]
for any $d$-regular graph $G$ and any $q\geqslant 2$.

\subsection{Main results}

The goal of this paper is to obtain a nearly optimal improvement of \eqref{eq-os-intro-1}
for random regular graphs.

To this end, the following combinatorial property of (deterministic) regular graphs is a principal conceptual
contribution of this paper. (For any graph $G$, by $\dist_G(\cdot,\cdot)$ we denote the
shortest-path distance on $G$; see Section \ref{sec2} for more details.)
\begin{definition}[Long-range expansion] \label{def-lre}
Let $n\geqslant d\geqslant 3$ be integers, and let $G$ be a $d$-regular graph on $[n]:=\{1,\dots,n\}$.
Given parameters $\alpha\in(0,1]$, $\varepsilon\in(0,1]$ and $L\geqslant 1$, we say that $G$
satisfies the \emph{long-range expansion property}, denoted by $\mathrm{Expan}(\alpha,\varepsilon,L)$,
if the following are satisfied.
\begin{enumerate}
\item[$(A)$] \label{Part-A} For every nonempty subset $S$ of $[n]$ and every positive integer $\ell$,
\[ \big|\{v\in [n]\colon \dist_G(v,S)\leqslant \ell\}\big|
\geqslant \min\Big\{\frac{3n}{4},\alpha (d-1)^\ell\,|S|\Big\}. \]
\item[$(B)$] \label{Part-B} For every nonempty subset $S$ of $[n]$ and every positive integer $\ell$ with
$\alpha (d-1)^{\ell-1}|S|\leqslant \frac{3n}{4}$, setting
\[ T:=\Big\{e\in E_G\colon \big|\{v\in S \colon \dist_G(v,e)\leqslant \ell-1\}\big|\geqslant L(d-1-\varepsilon)^\ell\Big\}, \]
there is at least one vertex $v\in S$ with
\[ \big|\{e\in T\colon \dist_G(v,e)\leqslant \ell-1\}\big|\leqslant L(d-1-\varepsilon)^\ell. \]
\end{enumerate}
\end{definition}
As we shall see in Proposition \ref{lemma:long-range-expansion} below,
for any integer $d\geqslant 6$, a uniformly random $d$-regular graph satisfies property
$\mathrm{Expan}(\alpha,\varepsilon,L)$---for appropriate choices
of $\alpha,\varepsilon$ and $L$---with high probability. More importantly, the long-range expansion
property has a significant effect on the behavior of the Poincar\'{e} constant. This is the content of the
following theorem which is the main result of this paper.
\begin{theorem}[Nonlinear spectral gap via long-range expansion] \label{aekjfnakfn}
Let $q\geqslant 2$, let $\mathrm{C}\geqslant 1$, let $K\geqslant 1$, let $d\geqslant 3$ be an integer, let $\alpha,\varepsilon\in (0,1]$, let $L\geqslant 1$, and set
\begin{equation} \label{eq-Gamma}
\Gamma=\Gamma(q,\mathrm{C},K,d,\alpha,\varepsilon,L):= 2^{115}\, q^{10}\, \mathrm{C}^2\, K^3\,
\Big( \frac{d^{25}\, L^8}{\alpha^{14}\, \varepsilon^{18}}\Big). 
\end{equation}
Let $G$ be a $d$-regular graph that satisfies property $\mathrm{Expan}(\alpha,\varepsilon,L)$.
Then, for any finite-dimensional normed space $X$ with a $K$-unconditional basis that has cotype $q$
with constant\, $\mathrm{C}$, we have
\begin{equation} \label{eq-new1}
\gamma(G,\|\cdot\|_X) \leqslant \Gamma;
\end{equation}
more generally, for any $p\geqslant 1$,
\begin{equation} \label{eq-newnew1}
\gamma(G,\|\cdot\|^p_X) \leqslant (p\Gamma)^p.
\end{equation}
\end{theorem}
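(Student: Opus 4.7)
The plan is a multi-scale argument that leverages the two axioms of $\mathrm{Expan}(\alpha,\varepsilon,L)$ to bundle all spatial scales into a single application of cotype, rather than invoking cotype once per scale (which is the bottleneck responsible for the exponential-in-$q$ dependence in the Ozawa-type bound \eqref{eq-os-intro-1}).

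First I would set up a ball-averaging decomposition. Fix $f\colon V_G \to X$, and for each integer $\ell \geq 0$ and each $v\in V_G$, let $A_\ell f(v)$ denote the average of $f$ over $B_\ell(v)=\{u\colon \dist_G(u,v)\leq \ell\}$. Axiom $(A)$ guarantees $|B_\ell(v)|\gtrsim \alpha(d-1)^\ell$ until the ball saturates at $\tfrac{3n}{4}$ of the vertex set, which happens at a critical scale $\ell_*\leq \log_{d-1}(n/\alpha)+O(1)$. Past $\ell_*$, every two balls overlap substantially, so $A_{\ell_*}f(v)$ agrees with the global mean $\bar f$ up to an error controllable by a single edge-difference sum, and the telescoping identity
\[
\|f(v)-\bar f\|_X\;\leq\;\|A_{\ell_*}f(v)-\bar f\|_X+\sum_{\ell=0}^{\ell_*-1}\|A_\ell f(v)-A_{\ell+1}f(v)\|_X
\]
reduces the Poincar\'e inequality to bounding, for each scale $\ell$, the scale-$\ell$ gradient $n^{-1}\sum_v\|A_\ell f(v)-A_{\ell+1}f(v)\|_X$ by $|E_G|^{-1}\sum_{e\in E_G}\|f_+(e)-f_-(e)\|_X$ times a controlled constant.

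Second, I would deploy the $K$-unconditional basis and the cotype hypothesis via Rademacher symmetrization. Writing $f(v)=\sum_jf_j(v)e_j$, unconditionality gives $\|x\|_X\asymp_K \Exp_r\|\sum_j r_j x_j e_j\|_X$, Kahane's inequality (with constant $O(\sqrt q)$) moves between $L_1$ and $L_q$ moments in $r$, and cotype supplies the lower bound $\Exp_r\|\sum_j r_j x_j e_j\|_X^q\geq \mathrm C^{-q}\sum_j |x_j|^q\|e_j\|_X^q$. The scale-$\ell$ gradient $A_\ell f(v)-A_{\ell+1}f(v)$ can then be written as a weighted combination of edge differences $f(u_1)-f(u_2)$ with $e=\{u_1,u_2\}$ in the boundary annulus of $B_\ell(v)$, the weights being purely combinatorial. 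Axiom $(B)$ is the crucial input here: for each scale $\ell$ and for the set $S$ defined by the support of the gradient, it certifies that the number of edges simultaneously "close" to many elements of $S$ is bounded by roughly $L(d-1-\varepsilon)^\ell$, which in turn prevents any single edge from being over-counted when one sums the per-$v$ gradient estimates into a per-edge sum.

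Third, to avoid the $\mathrm C^{\ell_*}$ blow-up of a naive scale-by-scale application, I would apply cotype (together with Kahane) only once, \emph{after} bundling the per-scale estimates. Concretely, I would view the sequence of scale-gradients as coordinates of a vector in $\ell_q$ and use cotype to bound its $\ell_q$-norm by a single global Rademacher moment over all edges; the edge-overcounting bound from $(B)$ then converts this back to $|E_G|^{-1}\sum_e\|f_+(e)-f_-(e)\|_X$. This is where I expect the main obstacle: organizing the per-scale weights so that one single, global application of cotype suffices while respecting the delicate balance between the growth rate $\alpha(d-1)^\ell$ dictated by $(A)$ and the allowable saturation $L(d-1-\varepsilon)^\ell$ dictated by $(B)$. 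The $q^{10}\mathrm C^2K^3$ prefactor in $\Gamma$ should emerge from a small, constant number of symmetrization / Kahane / cotype invocations, and the $d^{25}L^8\alpha^{-14}\varepsilon^{-18}$ factor from the combinatorial constants embedded in the axioms. Finally, \eqref{eq-newnew1} follows from the $p=1$ case by raising the inequality to the $p$-th power along paths of length $O(p)$ and applying Jensen's inequality, which absorbs a factor of $p^p$ into the constant.
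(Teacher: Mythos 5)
Your proposal takes a genuinely different route from the paper, and at the crucial juncture it has a gap that you yourself flag but do not resolve, so I do not think it closes.

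The paper does \emph{not} use a ball-averaging (Littlewood--Paley) decomposition at all, and for good reason: for a general normed space $X$ with merely an unconditional basis and finite cotype, one has no useful control of $\|A_\ell f(v)-A_{\ell+1} f(v)\|_X$ by edge differences without some form of martingale cotype, which is precisely the hypothesis one is trying to avoid. Instead, the paper first \emph{reduces to $\{-1,0,1\}$-valued maps with zero empirical median} (Theorem~\ref{afjnobgiugbo} via Corollary~\ref{cor-strong-poincare}), by encoding arbitrary vectors as binary vectors in a higher-dimensional $X(\ell_1^m)$. This reduction is essential: it makes the $1$-unconditionality usable at the level of supports --- for binary vectors, $\|f(w)-f(w')\|_X \geqslant \|P_J(f(v))\|_X$ whenever $J\subseteq\supp(f(v))\cap\supp(f(w)-f(w'))$, no averaging needed. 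The remainder of the proof then works coordinate-by-coordinate on the support of $f(v)$, using axiom~$(A)$ to locate for each $v$ and each $j\in\supp(f(v))$ a scale $\ell_{v,j}$ with many $j$-jump edges nearby (Lemma~\ref{afhbaofjhbojhb}), and axiom~$(B)$ to greedily prune these into nearly disjoint families $\tilde E(v,j)$ (Lemma~\ref{afjhsbfiasdjfhbdsifjhb}). Your telescoping sum never passes through this support-level reduction, so the unconditionality has no combinatorial handle to grab.

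The second and more serious gap is exactly the one you acknowledge: how to avoid a factor $\mathrm{C}^{\ell_*}$ (or, after Kahane, $q^{\ell_*}$) from $\ell_*\approx\log_{d-1} n$ scales. Your plan --- ``view the sequence of scale-gradients as coordinates of a vector in $\ell_q$ and use cotype once globally'' --- is not a well-posed step: each scale-gradient is an element of $X$ itself, not a scalar coordinate, and there is no canonical $\ell_q$-direct-sum structure that the hypothesis on $X$ provides. The paper resolves this with a \emph{dichotomy} (Lemma~\ref{afokjnlfkjnsadfkjhsfb}): at each scale and dyadic level, either enough distinct edges are used that a plain double-counting bound is already summable in $\ell$ (after weighting by the fixed convergent sequence $a_\ell$), or many of the compensating edges are heavily reused, which, combined with the near-disjointness from axiom~$(B)$, forces those edges $\{w,w'\}$ to have very large $\supp(f(w)-f(w'))$; Proposition~\ref{aljfhbsjfhbskajhsb} (a ``restricted cotype vs.\ almost-disjoint supports'' estimate, applied once to the single vector $|f(w)-f(w')|$ rather than across scales) then yields a \emph{strictly improved} per-edge bound with an extra factor $\big(\tfrac{d-1}{d-1-\varepsilon}\big)^{\ell/q}$, and this geometric gain is what makes the sum over scales converge. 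That mechanism --- reuse forces large support, and cotype punishes large support --- is the heart of the polynomial-in-$q$ bound, and it has no counterpart in your sketch. Finally, your Jensen-along-paths sketch for \eqref{eq-newnew1} is not what the paper does; the paper applies the $p=1$ case to $g(v)=\|f(v)\|_X^{p-1}f(v)$ and closes with H\"older, a self-improvement argument that avoids any path decomposition.
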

\begin{remark} \label{rem-new1}
Fix $q\geqslant 2$ and an integer $d\geqslant 3$, and let $G$ be any $d$-regular graph $G$~on~$[n]$ with $\log n \gtrsim q$.
By Matou\v{s}ek's version \cite{Ma97} of Bourgain's embedding theorem, there exists a map $f\colon [n]\to \ell_q^k$,
with $k$ large enough, such that $\dist_G(v,w) \leqslant \|f(v)-f(w)\|_{\ell_q^k} \lesssim \frac{\log n}{q} \, \dist_G(v,w)$
for all $v,w\in [n]$, which is easily seen to imply that $\gamma(G,\|\cdot\|^p_{\ell^k_q}) \gtrsim_{d,p} q^p$
for any $p\geqslant 1$. We note that
$\ell^k_q$ has an $1$-unconditional basis and cotype $q$ with constant $\mathrm{C}=O(1)$
(see, e.g., \cite{JL01}).
Thus, our estimates in \eqref{eq-new1} and \eqref{eq-newnew1}
establish that an optimal dependence of the Poincar\'e constant
on $q$ is {\it polynomial}.
\end{remark}
\begin{remark} \label{rem-new2}
The bulk of the proof of Theorem \ref{aekjfnakfn} consists in showing the estimate \eqref{eq-new1}.
The case $p>1$ follows easily from the case $p=1$ and a (standard) variant of Matou\v{s}ek's extrapolation
argument \cite{Ma97}.
\end{remark}
The following proposition complements Theorem~\ref{aekjfnakfn} and shows that regular graphs with
the long-range expansion property exist in abundance.
\begin{proposition}[Long-range expansion is typical] \label{lemma:long-range-expansion}
Let $d \geqslant 3$ be an integer, and set
\begin{align}
\alpha=\alpha(d) & := d^{-10^{11} \ln d},\label{era-1} \\
\varepsilon & := 0.2, \label{era-2} \\
L= L(d) & := 24\,\alpha^{-1}. \label{era-3}
\end{align}
Let $n\geqslant d$ such that $nd$ is an even integer, and let\, $\mathbb{P}$ be the uniform probability measure~on~$G(n,d)$,
the set of $d$-regular graphs on $n$ vertices.
\begin{enumerate}
\item[(a)] There exists a constant $c_1=c_1(d)>0$ such that
\[ \mathbb{P}\big[G \text{ satisfies part \emph{(\hyperref[Part-A]{$A$})} of property\, $\mathrm{Expan}(\alpha,\varepsilon,L)$}\big]
\geqslant 1-O_{d}\Big(\frac{1}{n^{c_1}}\Big);  \]
namely, for any integer $d\geqslant 3$, a uniformly random $d$-regular graph on $n$ vertices satisfies
part \emph{(\hyperref[Part-A]{$A$})} of the long-range expansion property\, $\mathrm{Expan}(\alpha,\varepsilon,L)$
with high probability.
\item[(b)] If\, $d\geqslant 6$ and $G$ is any (deterministic) $d$-regular graph on $[n]$ with
$\lambda(G) \leqslant 2.1\sqrt{d-1}$, then $G$ satisfies part \emph{(\hyperref[Part-B]{$B$})}
of property\, $\mathrm{Expan}(\alpha,\varepsilon,L)$. Thus, by Friedman's
second eigenvalue theorem \emph{\cite{Fr08}}, for any integer $d\geqslant 6$, there exists a constant $c_2=c_2(d)>0$ such that
\[ \mathbb{P}\big[G \text{ satisfies part \emph{(\hyperref[Part-B]{$B$})} of property\, $\mathrm{Expan}(\alpha,\varepsilon,L)$}\big]
\geqslant 1-O_{d}\Big(\frac{1}{n^{c_2}}\Big).  \]
\end{enumerate}
In particular, for any integer $d\geqslant 6$, a uniformly random $d$-regular graph on $n$ vertices
satisfies the long-range expansion property  $\mathrm{Expan}(\alpha,\varepsilon,L)$
with probability at least $1-O_d\left(n^{-c_3}\right)$
for some constant $c_3=c_3(d)>0$.
\end{proposition}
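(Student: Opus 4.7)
The statement splits into (a) a probabilistic vertex-expansion claim, (b) a deterministic spectral implication combined with Friedman's theorem, and the ``in particular'' conclusion (immediate by intersecting the two high-probability events).

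For part (a), the plan is to work in the configuration model, which is contiguous to the uniform measure on $G(n,d)$, and to expose edges in breadth-first order starting from a fixed set $S$. At each step a half-edge either lands on a ``fresh'' vertex---multiplying the growing ball by an expected factor of $\approx d-1$---or ``collides'' with an already-exposed vertex. A standard first-moment estimate bounds the probability of $m$ collisions in the first $M$ pairings by $(CM^2/(nd))^m$. For $|B_\ell(S)|$ to miss the target $\alpha(d-1)^\ell|S|$ before reaching $3n/4$, a definite fraction of pairings must collide, so the probability for a fixed pair $(S,\ell)$ is inverse-polynomially small. Union-bounding over the $\binom{n}{s}$ choices of $S$, over $s$, and over the $O(\log_{d-1}n)$ relevant radii yields the tail $O(n^{-c_1})$; the extravagantly small choice $\alpha=d^{-10^{11}\ln d}$ provides ample slack to absorb losses.

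For the deterministic half of part (b), fix $S$ and $\ell$ with $\alpha(d-1)^{\ell-1}|S|\leq 3n/4$, set $N_e(S):=|\{v\in S:\dist_G(v,e)\leq \ell-1\}|$, and let $T:=\{e\in E_G:N_e(S)\geq L(d-1-\varepsilon)^\ell\}$. By Markov's inequality it suffices to show $\sum_{v\in S}|\{e\in T:\dist_G(v,e)\leq \ell-1\}|\leq L(d-1-\varepsilon)^\ell|S|$, and double-counting rewrites this sum as $\sum_{e\in T}N_e(S)$. Combining the defining lower bound $\sum_{e\in T}N_e(S)\geq L(d-1-\varepsilon)^\ell|T|$ with Cauchy--Schwarz reduces the task to
\[
\sum_{e\in E_G}N_e(S)^2\,\leq\,L^2(d-1-\varepsilon)^{2\ell}|S|.
\]
For $e=\{u_1,u_2\}$, $N_e(S)\leq h(u_1)+h(u_2)$ where $h(v):=|\{w\in S:\dist_G(v,w)\leq \ell-1\}|$, so the left side is at most $2d\sum_v h(v)^2\leq 2d\|M\mathbf{1}_S\|_2^2$, with $M:=\sum_{k=0}^{\ell-1}A_G^k$ and using the pointwise bound $h(v)\leq (M\mathbf{1}_S)(v)$ (each vertex in $B_{\ell-1}(v)\cap S$ contributes at least one walk). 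Decomposing $\mathbf{1}_S=(|S|/n)\mathbf{1}+g$ with $g\perp\mathbf{1}$ and applying $\lambda(G)\leq 2.1\sqrt{d-1}$ gives
\[
\|M\mathbf{1}_S\|_2^2\,\leq\,\frac{d^{2\ell}|S|^2}{n(d-1)^2}+\ell^2\,(2.1\sqrt{d-1})^{2\ell-2}|S|.
\]
The size constraint on $|S|/n$ bounds the first term by $O(|S|/\alpha)$, while the second is polynomial in $\ell$ times $(d-1)^{\ell-1}|S|$. For $d\geq 6$ the ratios $d^2/((d-1)(d-1-\varepsilon)^2)$ and $4.41(d-1)/(d-1-\varepsilon)^2$ are strictly less than $1$, so the choices $\varepsilon=0.2$ and $L=24/\alpha$ make both contributions dominated by $L^2(d-1-\varepsilon)^{2\ell}|S|/(2d)$, closing the deterministic implication. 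The high-probability half of (b) is Friedman's second-eigenvalue theorem, and the final ``in particular'' statement follows from a union bound with $c_3=\min(c_1,c_2)$.

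The main obstacle is the uniform combinatorial control in part (a): the bound must hold simultaneously over all $(S,\ell)$, and one must carefully handle the transition from the ``growing'' regime $|B_\ell(S)|\ll n$ to the ``saturated'' regime $|B_\ell(S)|\geq 3n/4$. Part (b), by contrast, is essentially template once the Markov--Cauchy--Schwarz--spectral-decomposition strategy is chosen; the effort there is almost entirely in verifying that the stated numerical constants close the argument.
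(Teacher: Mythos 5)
Your treatment of part~(b) is correct but follows a genuinely different path from the paper. You start from the observation that, by averaging, it suffices to bound $\sum_{e\in T}N_e(S)$ (with $N_e(S)=|\{v\in S:\dist_G(v,e)\leqslant\ell-1\}|$), and then you close the argument with Cauchy--Schwarz, the pointwise majorization $N_{\{u_1,u_2\}}(S)\leqslant(M\mathbbm{1}_S)(u_1)+(M\mathbbm{1}_S)(u_2)$ with $M=\sum_{k=0}^{\ell-1}A_G^k$, and a spectral decomposition of $\mathbbm{1}_S$. The paper instead argues by contradiction: assuming every $v\in S$ sees too many edges of $T$, it constructs the balanced test vector $x=\sqrt{|R^\complement|/|R|}\,n^{-1/2}\mathbbm{1}_R-\sqrt{|R|/|R^\complement|}\,n^{-1/2}\mathbbm{1}_{R^\complement}$ (where $R$ is the vertex set spanned by $T$), bounds $\|\sum_{k=1}^\ell A_G^k x\|_2$ from below combinatorially and from above spectrally (Fact~\ref{f2-part-b}), and reads off a contradiction. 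Both are spectral arguments exploiting $\lambda(G)\leqslant 2.1\sqrt{d-1}$; your version has the merit of avoiding the contradiction scaffolding and the bookkeeping over $R$ versus $R^\complement$. The numerical closing conditions you state ($d^2/((d-1)(d-1-\varepsilon)^2)<1$ and $4.41(d-1)/(d-1-\varepsilon)^2<1$ for $d\geqslant 6$, $\varepsilon=0.2$, with $L=24/\alpha$ absorbing the prefactors) do check out.

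There is, however, a real gap in part~(a). Your sketch --- fixed $(S,\ell)$, a first-moment estimate $(CM^2/(nd))^m$ on $m$ collisions among the first $M$ pairings, and a union bound over $S$ and $\ell$ --- does not by itself yield the tail $O(n^{-c_1})$ when $d$ is small and $|S|$ is small. The difficulty, which the paper flags explicitly, is the base case of the induction: for $d=3,4,5$ and $|S|=O(1)$, the increment $|\Delta_{\mathbf{G}}(S,\ell)|$ is poorly concentrated while the explored set is still of bounded size, because the number of paired half-edges $M$ is itself $O(1)$ and the bound $(CM^2/n)^m$ does not improve with $\ell$. Simply declaring ``a definite fraction of pairings must collide'' does not give a failure probability summable against the $\binom{n}{|S|}$ choices of $S$ uniformly in the small-$\ell$ range. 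The paper resolves this by a Cheeger bootstrap: conditioned on the Friedman event $\lambda(G)\leqslant 2.1\sqrt{d-1}$ (which holds with polynomial failure probability), the Cheeger inequality gives a deterministic weak growth rate $\geqslant 1.0016$ per step (Fact~\ref{fact_new_Cheeger_expansion}), so after $L_0\approx 5\times 10^7$ steps the explored ball is large enough for the configuration-model concentration (Corollary~\ref{c2-pap1}, Proposition~\ref{p3-pap1}) to take over; the extremely small value of $\alpha$ is precisely what pays for the $(d-1)^{L_0}$ multiplicative loss of this deterministic warm-up. You correctly identify the large-radius transition to the saturated regime (handled in the paper by Lemma~\ref{lemCheeger}), but you miss the small-radius bootstrap, which is where the actual technical work of part~(a) lives.
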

Evidently part (\hyperref[Part-B]{$B$}) of the long-range expansion property is a spectral property related to edge expansion.
On the other hand, part (\hyperref[Part-A]{$A$}) quantifies vertex expansion. A closely related property called
\emph{lossless expansion}\footnote{A graph $G=(V_G,E_G)$ is said to be an \emph{$(\eps,\delta)$-lossless expander}
if, for every nonempty $S \subseteq V_G$ with $|S| \leqslant \eps |V_G|$, we have that
$\big| \{v\in V_G \colon \dist_G(v,S)= 1\}\big| \geqslant (d-1-\delta)|S|$.} has been extensively studied
in the computer science literature (see, e.g., \cite{Ka95,CRVW02}), and is known to hold for random regular graphs
of constant degree~\cite{Vad12}.
\begin{remark}[Explicit constructions] \label{rem-explicit-constructions}
Part (\hyperref[Part-B]{$B$}) of the long-range expansion property is satisfied by all explicit constructions
of Ramanujan graphs \cite{LPS88}. On the other hand, it seems quite challenging to give explicit constructions
of regular graphs satisfying part (\hyperref[Part-A]{$A$}). Indeed, the closely related problem of finding explicit
constructions of lossless expanders remains open and it has profound applications in computer science,
especially to coding theory. The explicit construction of either of these forms of vertex expansion could plausibly
lead to the other. As a natural candidate, it is a folklore conjecture that the Ramanujan graphs constructed
in \cite{LPS88} should be lossless expanders. However, the matter is delicate: there are high-girth Ramanujan graphs
which are known to be far from having lossless expansion \cite{MS21}.
\end{remark}


\subsection{Application to bi-Lipschitz embeddings}

Given a finite metric space $\mathcal{M}=(M,d)$ and a normed space $(X,\|\cdot\|_X)$, the \emph{$($bi-Lipschitz$)$ distortion}
of $\mathcal{M}$ into $X$, denoted by $c_X(\mathcal{M})$, is defined to be the smallest positive constant $D$ for which
there exists  a map $f\colon M\to X$ such that $d(i,j)\leqslant \|f(i)-f(j)\|_X\leqslant D\, d(i,j)$ for all $i,j\in M$.
When $X=\ell_p$ with $p\in [1,\infty]$, then for simplicity we set $c_p(\mathcal{M})= c_{\ell_p}(\mathcal{M})$.
It is a classical fact that $c_{\infty}(\mathcal{M})=1$, so we may define $p(\mathcal{M})$ to be the infimum over all
$p\in [2,\infty)$ such that $c_p(\mathcal{M})<20$. (As it is explained in \cite{Na14}, the choice of the number $20$
is rather arbitrary, and it is fixed for convenience; one could equally define the parameter $p(\mathcal{M})$
using any number bigger that $1$ instead of $20$.) In particular, for any connected\footnote{If $G$ is not connected,
then, by convention, we set $p(G)=0$.} $d$-regular graph $G$ on $[n]$, $p(G)$ denotes this parameter for the metric space $([n],\mathrm{dist}_G)$.

The study of the statistical properties of $p(G)$, when $G$ is sampled uniformly from $G(n,d)$,
can be traced to the work of Matou\v{s}ek \cite{Ma97} that implies that, in the regime $d=O(1)$, we have
\begin{equation} \label{eq-dist-e1}
p(n,d):= \underset{G\sim G(n,d)}{\mathbb{E}}\, p(G) \gtrsim \log_d n.
\end{equation}
More refined estimates for $p(n,d)$ have been obtained by Naor \cite[Proposition 1.9]{Na14}.

One can consider a relaxation of the parameter $p(G)$ by replacing $\ell_p$ spaces with
normed spaces with an unconditional basis and cotype. More precisely, for any connected $d$-regular graph $G$~on~$[n]$,
let $\mathrm{uc}(G)$ denote the infimum over all $q\in [2,\infty)$ such that $c_X(G)<20$ for some
finite-dimensional normed space $X$ that has a $20$-unconditional basis and cotype $q$ with constant~$20$.
(Again, $20$ is rather arbitrary, and it is fixed for convenience.)
Notice that $\mathrm{uc}(G)\leqslant p(G)$.

As a consequence of the
main result of our paper, we have the following analogue of \eqref{eq-dist-e1}.
\begin{corollary}
For any integer $d\geqslant 6$,
\[ \underset{G\sim G(n,d)}{\mathbb{E}}\, \mathrm{uc}(G) \gtrsim_d (\log_d n)^{\frac{1}{10}}; \]
in other words, the quantity $\mathbb{E}\, \mathrm{uc}(G)$ grows polylogarithmically in the vertex set size.
\end{corollary}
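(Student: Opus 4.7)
The plan is to deduce the corollary directly from Theorem \ref{aekjfnakfn} and Proposition \ref{lemma:long-range-expansion}, combined with the folklore fact that in any $d$-regular graph on $n$ vertices the average shortest-path distance is $\Omega_d(\log_d n)$. Since Theorem \ref{aekjfnakfn} yields a Poincar\'e constant that is polynomial of degree $10$ in $q$, any embedding of a random $d$-regular graph into a space of cotype $q$ with distortion $O(1)$ will be obstructed unless $q$ exceeds a tenth root of the typical distance in $G$.

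Fix $d\geqslant 6$ and let $G\sim G(n,d)$. By Proposition \ref{lemma:long-range-expansion}, the event $\Event$ that $G$ satisfies $\mathrm{Expan}(\alpha(d),0.2,L(d))$ has probability at least $1-O_d(n^{-c_3})$. First observe that on $\Event$ the graph $G$ is connected: applying part \emph{(A)} with $S=\{v\}$ and any $\ell$ large enough that $\alpha(d-1)^\ell\geqslant 3n/4$ shows every ball of radius $\ell$ contains at least $3n/4$ vertices, forcing any two such balls to meet. Next, fix $G\in\Event$ and any $q>\mathrm{uc}(G)$; by definition there exist a finite-dimensional normed space $X$ with a $20$-unconditional basis and cotype $q$ with constant $20$, together with $f\colon[n]\to X$ satisfying $\dist_G(v,w)\leqslant\|f(v)-f(w)\|_X\leqslant 20\,\dist_G(v,w)$. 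Applying Theorem \ref{aekjfnakfn} with $K=\mathrm{C}=20$ gives $\gamma(G,\|\cdot\|_X)\leqslant C_1(d)\,q^{10}$ for some $C_1(d)$ depending only on $d$; plugging $f$ into \eqref{eq-poincare} with $p=1$ and using the bi-Lipschitz bounds yields
\[
\frac{1}{n^2}\sum_{v,w\in[n]}\dist_G(v,w)\;\leqslant\;\frac{1}{n^2}\sum_{v,w}\|f(v)-f(w)\|_X\;\leqslant\;\frac{C_1(d)\,q^{10}}{|E_G|}\cdot 20\,|E_G|\;=\;20\,C_1(d)\,q^{10}.
\]

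For a matching lower bound on the left-hand side, note that the ball of radius $\ell$ around any vertex of a $d$-regular graph contains at most $2(d-1)^\ell$ vertices, so taking $\ell=\lfloor\log_{d-1}(n/4)\rfloor$ shows that for each $v$ at least half of the other vertices lie at distance strictly greater than $\ell$; hence $\frac{1}{n^2}\sum_{v,w}\dist_G(v,w)\geqslant\ell/2\gtrsim_d\log_d n$. Combining with the previous display gives $\log_d n\lesssim_d q^{10}$, so $q\gtrsim_d(\log_d n)^{1/10}$, and taking the infimum over such $q$ yields $\mathrm{uc}(G)\gtrsim_d(\log_d n)^{1/10}$ on $\Event$. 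Since $\mathrm{uc}(G)\geqslant 0$ on the complement, $\Exp\,\mathrm{uc}(G)\geqslant\Prob[\Event]\cdot c_d(\log_d n)^{1/10}\gtrsim_d(\log_d n)^{1/10}$ for $n$ large, as required. There is no serious technical obstacle beyond Theorem \ref{aekjfnakfn} itself: its exponent $10$ translates cleanly into the exponent $1/10$ of the corollary after the standard diameter calculation.
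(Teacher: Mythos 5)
Your proof is correct and follows essentially the same route as the paper's: invoke Proposition \ref{lemma:long-range-expansion} to get the long-range expansion event with high probability, apply Theorem \ref{aekjfnakfn} with $K=\mathrm{C}=20$ to bound the Poincar\'e constant by $O_d(q^{10})$, combine the bi-Lipschitz embedding with the $p=1$ Poincar\'e inequality and the $\gtrsim_d \log_d n$ lower bound on average distance in a $d$-regular graph, and conclude. The only cosmetic differences are that the paper instantiates $q=2\,\mathrm{uc}(G)$ rather than taking an infimum over $q>\mathrm{uc}(G)$, and that you spell out the connectivity and ball-counting details that the paper leaves implicit.
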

\begin{proof}
Fix a $d$-regular graph $G$ on $[n]$ that satisfies property $\mathrm{Expan}(\alpha,\varepsilon,L)$ for some
$\alpha,\varepsilon,L>0$. Set $q:=2\,\mathrm{uc}(G)$ and notice that, by definition of $q$, there exists a finite-dimensional
normed space $X$ that has a $20\text{-unconditional}$ basis and cotype $q$ with constant $20$, and a map $f\colon [n]\to X$
such that $\dist_G(v,w)\leqslant \|f(v)-f(w)\|_X\leqslant 20\, \dist_G(v,w)$ for all $v,w\in [n]$. Since $G$ is $d$-regular, we have
\[ \log_d n \lesssim_d \frac{1}{n^2} \sum_{v,w\in [n]} \dist_G(v,w) \leqslant \frac{1}{n^2} \sum_{v,w\in [n]} \|f(v)-f(w)\|_X. \]
On the other hand, by Theorem \ref{aekjfnakfn}, we have
\begin{align*}
\frac{1}{n^2} \sum_{v,w\in [n]} \|f(v)-f(w)\|_X & \leqslant
20\,\gamma(G,\|\cdot\|_X) \, \frac{1}{|E_G|}\, \sum_{\{v,w\}\in E_G} \dist_G(v,w) \\
& \lesssim_{d,\alpha,\varepsilon,L} q^{10}
\lesssim_{d,\alpha,\varepsilon,L} \mathrm{uc}(G)^{10},
\end{align*}
that implies that $\mathrm{uc}(G)\gtrsim_{d,\alpha,\varepsilon,L} (\log_d n)^{\frac{1}{10}}$.
By Proposition \ref{lemma:long-range-expansion}, the result follows.
\end{proof}

\subsection{Outline of the proof of Theorem \ref{aekjfnakfn}}

The first step is to reduce the validity of the Poincar\'{e} inequality  \eqref{eq-poincare}
to the estimate
\begin{equation} \label{expo-new}
\sum_{v\in V_G} \|f(v)\|_X \lesssim \sum_{\{w,w'\}\in E_G} \|f(w)-f(w')\|_X,
\end{equation}
for functions $f\colon V_G\to \{-1,0,1\}^k$ which map to ``binary'' vectors and have zero as their empirical median.
The advantage of this reduction is that, by the assumption that $X$ has unconditional basis,
the norm of a binary vector is controlled by the location of its support; in particular, for an edge $e=\{w,w'\}\in E_G$
and a vertex $v\in V_G$, the vector $f(w)-f(w')$ has bigger norm than the vector $f(v)$ if the support
of $f(w)-f(w')$ contains the support of~$f(v)$. This reduction is established in Corollary \ref{cor-strong-poincare}
by noting that, if a \textit{dimension-free} non-linear Poincar\'e inequality for functions taking binary values holds true,
then the general case also follows by encoding arbitrary vectors using binary vectors, possibly in some higher-dimensional space.

With this reduction at hand we proceed as follows. For every vertex $v\in V_G$
and every coordinate~$j$ in the support of $f(v)$, it is shown in Lemma \ref{afhbaofjhbojhb}
that there is some distance $\ell_{v,j}$ such that a significant portion of edges $e=\{w,w'\}\in B(v,\ell_{v,j}-1)$
(that is, edges at distance at most $\ell_{v,j}-1$ from $v$) satisfy $f(w)_j-f(w')_j \neq 0$. This is a consequence of
part (\hyperref[Part-A]{$A$}) of the long-range expansion property and the assumption that $f$ has zero as empirical median.
Next, we extract a large subset of these edges, that we denote by $\tilde E(v,j)$, so that $\tilde E(v,j)$ is still a
significant portion of $B(v,\ell_{v,j}-1)$ and, moreover, the collection $(\tilde E(v,j))_v$ is nearly disjoint
for every fixed $j$. This is achieved in Lemma~\ref{afjhsbfiasdjfhbdsifjhb} using part (\hyperref[Part-B]{$B$})
of the long-range expansion property.

For each distance $\ell$ and each vertex $v\in V_G$, let $J(v,\ell):= \{j\colon \ell_{v,j} = \ell\}$ denote
the ``scale $\ell$ support'' of $f(v)$. Moreover, for an edge $e=\{w,w'\}\in E_G$, let
$J(v,\ell,e):= \{j \in J(v,\ell)\colon e \in \tilde E(v,j)\}$ denote the ``scale $\ell$ mutual support" of $f(w)-f(w')$ and $f(v)$.
In Lemma~\ref{aifgvsifghvfihfgiw} we show that for each scale $\ell$ and each vertex $v\in V_G$, we can find a set of edges
whose size is of order $(d-1)^\ell$ such that for each edge in this set, $\|\chi_{J(v,\ell,e)}\|_X$ is comparable to
$\|\chi_{J(v,\ell)}\|_X$; we informally say that such an edge \emph{compensates} $v$ at scale~$\ell$.

In order to finish the proof, we combine our analysis along all scales $\ell$. To this end, we write
\begin{equation} \label{eq-recombination}
\sum_{v \in V_G} \|f(v)\|_X =
\sum_{v \in V_G} \Big\|\sum_{\ell\geqslant 1} P_{J(v,\ell)}\big(f(v)\big)\Big\|_X
\leqslant \sum_{\ell\geqslant 1} \sum_{v \in V_G}  \big\| P_{J(v,\ell)}\big(f(v)\big)\big\|_X\,.
\end{equation}
Thus, it is enough to bound the right-hand side of \eqref{eq-recombination} by the
contribution of edges in the right-hand-side of \eqref{expo-new}. Our previous estimates only allow us
to achieve this for a single scale $\ell$. We claim, however, that these estimates can be strengthened
to be summable over~$\ell$. The mechanism is the following dichotomy, made precise in Lemma \ref{afokjnlfkjnsadfkjhsfb}.
As previously noted, each vertex has order $(d-1)^\ell$ compensating edges at scale $\ell$.
The key issue is how often these edges are reused between the vertices.
If the total number of edges that are used to compensate vertices at a given scale is sufficiently large, then
the estimates coming from Lemma~\ref{aifgvsifghvfihfgiw} together with a double-counting argument can be used
to establish the summability over~$\ell$. Otherwise, for the remaining scales, we can extract a smaller, but still sizable, set of edges each of which must be used to
compensate many vertices. Since the sets $\tilde E(v,j)$ are nearly disjoint (over~$v$, for fixed $j$),
the only way for an edge $e=\{w,w'\}\in E_G$ to be used to compensate many vertices at a given scale is for the vector $f(w)-f(w')$ to have very large support. And in this case, the bounds
obtained by Lemma~\ref{aifgvsifghvfihfgiw} can be significantly strengthened. This is achieved
using Proposition~\ref{aljfhbsjfhbskajhsb} which relies on the fact that the space $X$ has an unconditional
basis and non-trivial cotype. In either case, this dichotomy yields ~\eqref{expo-new}, completing the proof of Theorem \ref{aekjfnakfn}.


\section{Background material} \label{sec2}

\subsection{Graph theoretic preliminaries}

All graphs in this paper are simple; we allow multigraphs to have self-loops. If $G$ is any graph, then by $V_G$ we denote
its vertex set, and by $E_G$ we denote the set of its edges. As we have already mentioned, for any pair
$n\geqslant d\geqslant 3$ of positive integers, by $G(n,d)$ we denote the set of all $d$-regular graphs on $[n]:=\{1,\dots,n\}$.

\subsubsection{\!}

For any graph/multigraph $G$, by $\dist_G(\cdot,\cdot)$ we denote the shortest-path distance on~$G$ between vertices;
by convention we set $\dist_G(v,w):=\infty$ if $v,w\in V_G$ are contained in different connected components of $G$.
If $v$ is a vertex of $G$ and $S$ is a nonempty subset of $V_G$, then define the shorthand
$\dist_G(v,S):= \min\big\{ \dist_G(v,w)\colon w\in S\big\}$; in particular, if $e = \{w,w'\}$ is an edge of $G$, then
$\dist_G(v,e):=\min\big\{ \dist_G(v,w),\dist_G(v,w')\big\}$.

\subsubsection{\!}

If $X$ is a finite set, then a \emph{(partial) matching} of $X$ is a collection of pairwise (vertex-)disjoint edges of the
complete graph on $X$; notice, in particular, that the empty set is always a (partial) matching. A \emph{perfect matching}
of $X$ is matching such that every element of $X$ belongs to some edge of the matching; note that there exist
perfect matchings of $X$ if and only if $|X|$ is even.

\subsection{Banach space theoretic preliminaries}

The standard basis vectors in $\R^k$ will be denoted by $e_1,\dots,e_k$.
Given a vector $y\in\R^k$, by $\supp(y):=\{j\in [k]\colon y_j\neq 0\}$ we denote its support,
and by $|y|\in \mathbb{R}^k$ we denote the vector of absolute values of components of $y$;
moreover, if $J\subseteq [k]$, then we set $P_J(y):=\sum_{s\in J}\langle y,e_s\rangle e_s$,
so that $P_J(y)$ is obtained from $y$ by zeroing out all components indexed over $J^\complement:= [k]\setminus J$.
Finally, for any $J \subseteq [k]$, we define the vector $\chi_J \in \R^k$ by
$\chi_J:= (\mathbbm{1}_{[j \in J]})_{j=1}^k$.
\begin{convention}
With a slight abuse of terminology, we will say that a norm $\|\cdot\|_X$ in $\R^k$ is $1\text{-unconditional}$
if the standard vector basis in $\R^k$ is $1$-unconditional, that is, if\, $\|y\|_X =\big\||y|\big\|_X$ for all $y\in\R^k$.
\end{convention}

\subsection{Asymptotic notation}

If $a_1,\dots,a_k$ are parameters, then we write $O_{a_1,\dots,a_k}(X)$ to denote a quantity that is bounded in magnitude
by $X C_{a_1,\dots,a_k}$, where $C_{a_1,\dots,a_k}$ is a positive constant that depends on the parameters $a_1,\dots,a_k$.
We also write $Y\lesssim_{a_1,\dots,a_k}\!X$ or $X\gtrsim_{a_1,\dots,a_k}\!Y$ for the estimate $|Y|=O_{a_1,\dots,a_k}(X)$.
Finally, we write $o_{a_1,\dots,a_k}(1)$ to denote a sequence that goes to $0$ with a rate of convergence depending
on $a_1,\dots,a_k$.

\section{Restricted cotype, and reductions} \label{sec3}

\begin{definition}[Restricted cotype]
Let $q \geqslant 2$, and let $\mathrm{C}\geqslant 1$. We say that a finite collection $\{y_\alpha\}_{\alpha\in A}$
of vectors in a Banach space $X$ has \emph{restricted cotype $q$ with constant $\mathrm{C}$}
if for every nonempty subset $A'$ of $A$,
\[ \Exp\,\Big\|\sum_{\alpha\in A'} r_\alpha y_\alpha\Big\|_X^q
\geqslant \frac{1}{\mathrm{C}^q} \sum_{\alpha\in A'} \|y_\alpha\|_X^q, \]
where $(r_\alpha)_{\alpha\in A}$ are i.i.d. Rademacher random variables.
\end{definition}
We will apply the definition of restricted cotype to reduce
the proof of Theorem~\ref{aekjfnakfn} to the setting where the components of
the vectors $f(v)$, $v\in[n]$, take values in $\{-1,0,1\}$.
\begin{theorem}[Poincar\'{e} inequality
for vectors in $\{-1,0,1\}^k$]\label{afjnobgiugbo}
Let $q\geqslant 2$, let $\mathrm{C}\geqslant 1$, let $d\geqslant 3$ be an integer, let $\alpha,\varepsilon\in (0,1]$, let $L\geqslant 1$, and set
\begin{equation} \label{eq-new2}
\Pi=\Pi(q,\mathrm{C},d,\alpha,\varepsilon,L) := 2^{113}\, q^{10}\, \mathrm{C}^2\,
\Big( \frac{d^{24}\, L^8}{\alpha^{14}\,\varepsilon^{18}}\Big).
\end{equation}
Let $G$ be a $d$-regular graph on $[n]$ that satisfies property $\mathrm{Expan}(\alpha,\varepsilon,L)$,
let $k\geqslant 1$, and let $\|\cdot\|_X$ be an $1\text{-unconditional}$ norm in $\R^k$.
If $f\colon [n]\to \{-1,0,1\}^k$ is any non-identically zero map satisfying
\begin{equation} \label{eq-new-empirical}
\min\Big\{ \big|\{v\in [n]\colon f(v)_j\geqslant 0\}\big|,
\big|\{v\in [n]\colon f(v)_j\leqslant 0\}\big| \Big\}\geqslant \frac{n}{2} \ \ \ \text{ for every } j\in [k],
\end{equation}
and such that the collection of vectors $\big\{P_J\big(|f(v)|\big)\colon v\in [n] \text{ and }J\subseteq [k]\big\}$
has restricted cotype $q$ with constant $\mathrm{C}$, then
\[ \sum_{v\in[n]}\|f(v)\|_X \leqslant \Pi\,\sum_{\{w,w'\}\in E_G}\|f(w)-f(w')\|_X. \]
\end{theorem}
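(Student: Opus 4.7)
The plan is to follow the roadmap given at the end of Section~1. Since each $f(v)\in\{-1,0,1\}^k$ and $\|\cdot\|_X$ is $1$-unconditional, $\|f(v)\|_X=\|\chi_{\supp f(v)}\|_X$ and $\|f(w)-f(w')\|_X=\|\chi_{\supp(f(w)-f(w'))}\|_X$, so everything reduces to the combinatorial geometry of supports. The core idea is to assign to each coordinate $j\in\supp f(v)$ a \emph{scale} $\ell_{v,j}$ measuring how far from $v$ one must travel to witness a sign change in coordinate $j$, to bound the contribution of each scale via a collection of ``compensating'' edges, and finally to invoke a dichotomy that gives summability across scales.

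First I would invoke part~(A) of long-range expansion together with the empirical median hypothesis \eqref{eq-new-empirical} to define, for each $(v,j)$ with $j\in\supp f(v)$, the smallest integer $\ell_{v,j}$ such that the ball of radius $\ell_{v,j}$ around $v$ meets a vertex $w$ with $f(w)_j\neq f(v)_j$. Existence is immediate: the median condition supplies at least $n/2$ such vertices, which are eventually swept up by the geometric growth of balls in~(A). Minimality then forces the ball of radius $\ell_{v,j}-1$ to contain order $\alpha(d-1)^{\ell_{v,j}-1}$ vertices on which coordinate $j$ agrees with $f(v)_j$, and a boundary argument converts this into a comparable number of edges $e$ with $\dist_G(v,e)\leqslant \ell_{v,j}-1$ on which coordinate $j$ fluctuates. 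This is what I expect Lemma~\ref{afhbaofjhbojhb} to produce. Next I would apply part~(B), for each fixed $j$ and each $\ell$, to the set $S:=\{v:\ell_{v,j}=\ell\}$, in order to thin these edges to subcollections $\tilde E(v,j)$ that retain a fixed fraction of their size but are nearly disjoint across~$v$ for fixed~$j$---no edge lies in more than $L(d-1-\varepsilon)^{\ell_{v,j}}$ of them. This is the purpose of Lemma~\ref{afjhsbfiasdjfhbdsifjhb}.

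Bin coordinates by scale via $J(v,\ell):=\{j:\ell_{v,j}=\ell\}$ and $J(v,\ell,e):=\{j\in J(v,\ell):e\in\tilde E(v,j)\}$. The triangle inequality with $1$-unconditionality gives
$$
\sum_{v\in[n]}\|f(v)\|_X\leqslant \sum_{\ell\geqslant 1}\sum_{v\in[n]}\|\chi_{J(v,\ell)}\|_X.
$$
Lemma~\ref{aifgvsifghvfihfgiw} then supplies, for each $v$ and each scale $\ell$, a set of order $(d-1)^\ell$ \emph{compensating} edges $e$ with $\|\chi_{J(v,\ell,e)}\|_X\gtrsim \|\chi_{J(v,\ell)}\|_X$; since each coordinate $j\in J(v,\ell,e)$ genuinely flips across $e$, this is in turn dominated by $\|f(w)-f(w')\|_X$.

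The main obstacle is turning these per-scale estimates into a single, summable bound, which is the point of Lemma~\ref{afokjnlfkjnsadfkjhsfb}. Fix a scale $\ell$ and let $\mathcal{E}_\ell$ denote the set of edges that compensate at least one vertex at that scale. If $|\mathcal{E}_\ell|$ is large, each edge in $\mathcal{E}_\ell$ is charged to only boundedly many vertices, and a straightforward double-counting argument yields $\sum_v\|\chi_{J(v,\ell)}\|_X\lesssim (d-1)^{-\ell}\sum_{e\in\mathcal{E}_\ell}\|f(w)-f(w')\|_X$, a quantity summable against the right-hand side of the theorem with geometric decay in $\ell$. Otherwise $|\mathcal{E}_\ell|$ is small, and a pigeonhole forces a sizeable subset of edges to be reused by many vertices at scale $\ell$. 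By near-disjointness across $v$ for fixed $j$ from the pruning step, this reuse is possible only if $|\supp(f(w)-f(w'))|$ is very large along those edges; then Proposition~\ref{aljfhbsjfhbskajhsb}---which exploits the $1$-unconditional basis together with restricted cotype $q$ of the family $\{P_J(|f(v)|)\}$---upgrades the naive pointwise bound on $\|f(w)-f(w')\|_X$ by a polynomial-in-$q$ factor, exactly cancelling the scale loss. The delicate work is calibrating the dichotomy threshold so that both branches yield the same summable bound, with the polynomial $q^{10}$ in $\Pi$ emerging; the combinatorial constants $d^{24}$, $L^8$, $\alpha^{-14}$, $\varepsilon^{-18}$ then record the efficiency of the compensation and pruning steps.
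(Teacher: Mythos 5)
Your high-level roadmap—assign each $(v,j)$ a scale, harvest compensating edges at that scale, then argue summability across scales via a large/small dichotomy—matches the paper's architecture, but the definition of the scale $\ell_{v,j}$ that you propose is not the one the paper uses, and the gap is not cosmetic. You set $\ell_{v,j}$ to be the first radius at which $B(v,\ell_{v,j})$ meets a vertex $w$ with $f(w)_j\neq f(v)_j$, and then assert that a ``boundary argument'' turns the large agreeing ball $B(v,\ell_{v,j}-1)$ into a comparable number of edges within distance $\ell_{v,j}-1$ of $v$ that flip in coordinate $j$. That inference is false in general: if $f(v)_j=1$, every vertex in $B(v,\ell_{v,j}-1)$ has $j$-th coordinate $1$, and exactly one vertex at distance $\ell_{v,j}$ has a different value, then at most $d$ edges at distance $\leqslant \ell_{v,j}-1$ from $v$ flip in coordinate $j$, while $|B(v,\ell_{v,j}-1)|$ is of order $\alpha(d-1)^{\ell_{v,j}-1}$. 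Nothing in the long-range expansion hypothesis or the median condition rules this out, so your scale assignment does not produce the order-$(d-1)^{\ell}$ flipping edges that the rest of the argument needs.

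The paper avoids this by taking $\ell_{v,j}$ to be the smallest $\ell$ for which $|E(v,j;\ell)|\geqslant \frac{\alpha a_\ell}{6}(d-1)^{\ell-1}$, where $E(v,j;\ell)$ is already the set of flipping edges within distance $\ell-1$ and $(a_i)_{i\geqslant 1}$ is a fixed positive sequence with $\sum a_i=1$. That this minimum exists is precisely Lemma~\ref{afhbaofjhbojhb}, whose proof is not a boundary count but a telescoping argument over the spheres $W(j;\ell)$: one finds the first $\ell_0$ at which $|W(j;\ell_0)|$ exceeds a cumulative threshold $\big(\sum_{i\leqslant\ell_0}a_i\big)\frac{\alpha}{6}(d-1)^{\ell_0-1}|S|$ and then uses the minimality of $\ell_0$ to bound $|W(j;\ell_0-1)|$ from above, so that $|E(S,j;\ell_0)|\geqslant |W(j;\ell_0)|-(d-1)|W(j;\ell_0-1)|$ is still large. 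The same sequence $(a_\ell)$ is also what makes the sum over scales close at the end—your claim that the per-scale bound comes with ``geometric decay in $\ell$'' is not what happens. Both branches of the dichotomy in Lemma~\ref{afokjnlfkjnsadfkjhsfb} yield a factor of size $\mathrm{c}'/a_\ell$ (not $(d-1)^{-\ell}$), and the proof closes by observing $\sum_{\ell}a_\ell=1$. In the ``reused edges'' branch, the gain from Proposition~\ref{aljfhbsjfhbskajhsb} is the geometric growth $\big(\frac{d-1}{d-1-\varepsilon}\big)^{\ell/q}$, which is what overcomes the polynomial-in-$\ell$ losses from $a_\ell^4$ and forces the $q^{10}$ in $\Pi$. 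A small additional slip: for $f(w),f(w')\in\{-1,0,1\}^k$ the difference can have components equal to $\pm 2$, so $\|f(w)-f(w')\|_X=\|\chi_{\supp(f(w)-f(w'))}\|_X$ is not an identity; only the inequality $\geqslant$ holds, which fortunately is the direction used.
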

Theorem~\ref{afjnobgiugbo} is proved in the next section. Here, we prove some of its consequences.
We start with the following corollary.
\begin{corollary} \label{cor-strong-poincare}
Let the parameters $q,\mathrm{C},d,\alpha,\varepsilon,L$ be as in Theorem~\ref{afjnobgiugbo},
and let\, $\Pi$ be as in \eqref{eq-new2}. Let $G$ be a $d$-regular graph on $[n]$ that satisfies property
$\mathrm{Expan}(\alpha,\varepsilon,L)$, let $k\geqslant 1$, and let $\|\cdot\|_X$ be an $1$-unconditional
norm in $\R^k$ that has cotype $q$ with constant\, $\mathrm{C}$. If $f\colon [n]\to \R^k$ is any non-constant map
such that $0$ is an empirical median of $f$ in the sense of \eqref{eq-new-empirical}, then
\begin{equation} \label{ajhgfvaeiyfvewifywv}
\sum_{v\in[n]}\|f(v)\|_X \leqslant \frac{3\Pi}{2} \sum_{\{w,w'\}\in E_G}\|f(w)-f(w')\|_X;
\end{equation}
moreover, for any $p>1$,
\begin{equation} \label{eq-strong-poincare-p}
\sum_{v\in[n]}\|f(v)\|^p_X \leqslant \Big(\frac{3 \Pi}{2}\Big)^p\, d^{p-1} \big(\max\{2,p\}\big)^p
\sum_{\{w,w'\}\in E_G}\|f(w)-f(w')\|^p_X.
\end{equation}
\end{corollary}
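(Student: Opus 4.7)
The plan is to prove \eqref{ajhgfvaeiyfvewifywv} by reducing the $\R^k$-valued setting to the $\{-1,0,1\}$-valued setting of Theorem~\ref{afjnobgiugbo} through a ``level-set'' binary encoding, and then to derive \eqref{eq-strong-poincare-p} from \eqref{ajhgfvaeiyfvewifywv} by a standard Matou\v{s}ek-style extrapolation. The bulk of the work lies in the first step.

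For \eqref{ajhgfvaeiyfvewifywv}: by homogeneity and continuity in $f$, I may pass to a rational-valued approximation and, after clearing denominators, assume $f(v)_j \in \Z$ for every $v,j$. Setting $M := \max_{v,j}|f(v)_j|$, I define the lift $F \colon [n] \to \{-1,0,1\}^{[k]\times[M]}$ by $F(v)_{j,m} := \sign(f(v)_j)\,\mathbbm{1}_{[|f(v)_j|\geqslant m]}$, and endow $\R^{[k]\times[M]}$ with the $1$-unconditional norm
\[
\|z\|_{X'} := \Big\|\Big(\textstyle\sum_{m=1}^M |z_{j,m}|\Big)_{j=1}^k\Big\|_X.
\]
Two identities follow by telescoping in each coordinate $j$: first, $\sum_m |F(v)_{j,m}| = |f(v)_j|$, giving $\|F(v)\|_{X'} = \|f(v)\|_X$; and second, because $(F(w)_{j,m} - F(w')_{j,m})_m$ has constant sign in $m$, $\sum_m|F(w)_{j,m}-F(w')_{j,m}| = |f(w)_j - f(w')_j|$, giving $\|F(w) - F(w')\|_{X'} = \|f(w) - f(w')\|_X$.

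The remaining hypotheses of Theorem~\ref{afjnobgiugbo} for $F$ are verified as follows. The empirical median condition lifts through the inclusions $\{v \colon F(v)_{j,m} \geqslant 0\} \supseteq \{v \colon f(v)_j \geqslant 0\}$ (and the symmetric one for $\leqslant 0$). The principal technical point---which I expect to be the main obstacle---is verifying the restricted cotype of the collection $\big\{P_J(|F(v)|) \colon v \in [n],\, J \subseteq [k]\times[M]\big\}$ in $\|\cdot\|_{X'}$ with the \emph{same} constant $\mathrm{C}$ as $X$. For a subcollection $(y^\alpha)$, I set $c^\alpha \in \R^k_{\geqslant 0}$ by $c^\alpha_j := \sum_m y^\alpha_{j,m}$, so that $\|c^\alpha\|_X = \|y^\alpha\|_{X'}$. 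The elementary estimate $|\sum_\alpha r_\alpha c^\alpha_j| \leqslant \sum_m|\sum_\alpha r_\alpha y^\alpha_{j,m}|$ combined with $1$-unconditionality and monotonicity of $\|\cdot\|_X$ yields the pointwise bound $\|\sum_\alpha r_\alpha y^\alpha\|_{X'} \geqslant \|\sum_\alpha r_\alpha c^\alpha\|_X$, so the cotype-$q$ of $X$ applied to $(c^\alpha)$ transfers to the desired restricted cotype. Applying Theorem~\ref{afjnobgiugbo} to $F$ then produces \eqref{ajhgfvaeiyfvewifywv}, with the factor $3/2$ absorbing slack from the rational approximation.

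For \eqref{eq-strong-poincare-p}, I apply \eqref{ajhgfvaeiyfvewifywv} to $g(v) := \|f(v)\|_X^{p-1}\, f(v)$, which shares the component-wise sign pattern of $f$ (hence the empirical median) and is non-constant whenever $f$ is. Then $\sum_v \|g(v)\|_X = \sum_v \|f(v)\|_X^p$, and decomposing $g(w) - g(w') = \|f(w)\|_X^{p-1}(f(w)-f(w')) + (\|f(w)\|_X^{p-1} - \|f(w')\|_X^{p-1})\,f(w')$ and applying the mean-value estimate $|a^{p-1}-b^{p-1}| \leqslant (p-1)\max(a,b)^{p-2}|a-b|$ for $p\geqslant 2$ (and the elementary inequality $a^p + b^p \geqslant 2a^{p-1}b$ for $a\geqslant b\geqslant 0$, $p\in[1,2]$, which handles the regime $p<2$) yields the clean bound
\[
\|g(w) - g(w')\|_X \leqslant \max\{2,p\}\,\max(\|f(w)\|_X, \|f(w')\|_X)^{p-1}\,\|f(w) - f(w')\|_X.
\]
H\"older's inequality on the edge sum with conjugate exponents $p/(p-1)$ and $p$, together with the $d$-regularity bound $\sum_{\{w,w'\}\in E_G}\max(\|f(w)\|_X, \|f(w')\|_X)^p \leqslant d\sum_v \|f(v)\|_X^p$, then yields \eqref{eq-strong-poincare-p} after algebraic rearrangement.
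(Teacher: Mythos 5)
Your proof is correct, and the overall strategy is the same as the paper's: lift to the norm $\|(y_1,\dots,y_k)\|_{X(\ell_1^m)}=\big\|(\|y_1\|_{\ell_1^m},\dots,\|y_k\|_{\ell_1^m})\big\|_X$, transfer the restricted-cotype hypothesis to the lifted vectors via the same monotonicity argument (nonnegativity of the components plus $1$-unconditionality), apply Theorem~\ref{afjnobgiugbo}, and then derive \eqref{eq-strong-poincare-p} by Matou\v{s}ek extrapolation applied to $g(v)=\|f(v)\|_X^{p-1}f(v)$. The genuine difference is in the binary lift. You first reduce to integer-valued $f$ by density and then take the unit-level-set lift $F(v)_{j,m}=\operatorname{sign}(f(v)_j)\,\mathbbm{1}_{[|f(v)_j|\geqslant m]}$, which yields the exact identities $\|F(v)\|_{X'}=\|f(v)\|_X$ and $\|F(w)-F(w')\|_{X'}=\|f(w)-f(w')\|_X$ (your telescoping/constant-sign observation). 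The paper avoids the approximation step and instead discretizes directly at scale $\delta=\tfrac14\min\{|f(v)_j-f(w)_j| : f(v)_j\neq f(w)_j\}$; the ceilings in its lift give only one-sided inequalities, costing the factor $3/2$. One small remark: a clean density argument here needs no slack at all---the inequality is positively homogeneous in $f$, and rational sign-preserving approximants keep non-constancy and the constraint \eqref{eq-new-empirical}---so your route actually gives \eqref{ajhgfvaeiyfvewifywv} with $\Pi$ in place of $3\Pi/2$, and the sentence about $3/2$ ``absorbing slack from the rational approximation'' is superfluous. For the extrapolation, your symmetric decomposition of $g(w)-g(w')$ is a minor repackaging of the paper's (which orders the vertices by $\|f(v)\|_X$ and uses the inequality $|a-b(b/a)^{p-2}|\leqslant\max\{1,p-1\}|a-b|$), and your constant $\max\{2,p\}$ agrees with the paper's $\max\{1,p-1\}+1$.
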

\begin{proof}
We first give the proof of \eqref{ajhgfvaeiyfvewifywv}. We fix the map $f$, and we define
$$ \delta:=\frac{1}{4}\min\big\{|f(v)_j-f(w)_j|\colon v,w\in[n] \text { and } j\in [k] \text{ with } f(v)_j\neq f(w)_j\big\}, $$
and
$$ m:=\Big\lceil\frac{1}{\delta} \max\big\{|f(v)_j|\colon v\in[n] \text{ and } j\in [k]\big\} \Big\rceil. $$
Further, define a norm $\|\cdot\|_{X(\ell_1^m)}$ in $\R^{km}=\prod_{j=1}^k \R^m$ by
$$ \|(y_1,\dots,y_k)\|_{X(\ell_1^m)}:= \big\|(\|y_1\|_{\ell_1^m},\dots,\|y_k\|_{\ell_1^m})\big\|_X
\ \ \ \text{for every} \ (y_1,\dots,y_k)\in \prod_{j=1}^k \R^m. $$
For every $v\in[n]$, define a vector $\tilde f(v)=\big(\tilde f_1(v),\dots,\tilde f_k(v)\big)$ in $\prod_{j=1}^k \R^m$ as follows.
\begin{itemize}
    \item If $f(v)_j=0$, then we set $\tilde f_j(v)=0$.
    \item If $f(v)_j>0$, then let $\tilde f_j(v)$
    be the $\{0,1\}$--vector in $\R^m$ whose first
    $\big\lceil\frac{f(v)_j}{\delta}\big\rceil$ components are
    equal to $1$ and the remaining components are equal to zero.
    \item If $f(v)_j<0$, then, as above, let
    $\tilde f_j(v)$
    be the $\{0,-1\}$--vector in $\R^m$ whose first
    $\big\lceil\frac{-f(v)_j}{\delta}\big\rceil$ components are
    equal to $-1$ and the remaining components are equal to zero.
\end{itemize}
Note that, with the above definition, whenever $f(v)_j\neq 0$, we have
$$
\|\tilde f_j(v)\|_{\ell_1^m}\geqslant \frac{1}{\delta}|f(v)_j|,
$$
implying that
$$
\delta \sum_{v\in[n]}\|\tilde f(v)\|_{X(\ell_1^m)}
\geqslant
\delta \sum_{v\in[n]}\Big\|
\Big(\frac{1}{\delta}|f(v)_j|\Big)_{j\leqslant k}\Big\|_X
= \sum_{v\in[n]}\|f(v)\|_X.
$$
Further, whenever $f(w)_j\neq f(w')_j$, we have
$$
\frac{1}{\delta}|f(w)_j-f(w')_j|
\geqslant
\|\tilde f_j(w)-\tilde f_j(w')\|_{\ell_1^m}-2
\geqslant
\|\tilde f_j(w)-\tilde f_j(w')\|_{\ell_1^m}
-\frac{1}{2}\cdot \frac{1}{\delta}|f(w)_j-f(w')_j|,
$$
implying that
$$ \delta\sum_{\{w,w'\}\in E_G}\|\tilde f(w)-\tilde f(w')\|_{X(\ell_1^m)}
\leqslant \frac{3}{2}\sum_{\{w,w'\}\in E_G}\|f(w)-f(w')\|_X. $$
Thus, in order to prove \eqref{ajhgfvaeiyfvewifywv},
it is enough to verify that
\begin{equation}\label{eq005}
\sum_{v\in[n]}\|\tilde f(v)\|_{X(\ell_1^m)} \leqslant \Pi
\sum_{\{w,w'\}\in E_G}\|\tilde f(w)-\tilde f(w')\|_{X(\ell_1^m)}.
\end{equation}
Notice that, by \eqref{eq-new-empirical} and the definition of $\tilde f$,
$$ \min\Big\{\big|\{v\in [n]\colon \tilde f(v)_j\geqslant 0\}\big|,
\big|\{v\in [n]\colon \tilde f(v)_j\leqslant 0\}\big|\Big\}\geqslant
\frac{n}{2} \ \ \ \text{ for every } j\in [km]. $$
Hence, by Theorem~\ref{afjnobgiugbo}, in order to prove \eqref{eq005}, is suffices to show that
the collection of vectors $\big\{P_J\big(|\tilde f(v)|\big)\colon v\in[n] \text{ and } J\subseteq [km]\big\}$
has restricted cotype $q$ with constant $\mathrm{C}$ with respect to the norm~$\|\cdot\|_{X(\ell_1^m)}$.
To that end, take any collection of vectors $y^{(t)}=(y_1^{(t)},\dots,y_k^{(t)})\in\prod_{j=1}^k \R^{m}$,
$1\leqslant t\leqslant T$, with the property that for every $t\in [T]$ and every $j\in [k]$,
all components of $y_j^{(t)}\in\R^m$ are nonnegative (in particular, any coordinate projection of
$|\tilde f(v)|$, $v\in[n]$, satisfies this property), so that
$$ \sum_{s=1}^m\langle y_j^{(t)},e_s\rangle =\|y_j^{(t)}\|_{\ell_1^m}. $$
Then, for every $j\in [k]$, we have
$$ \Big\|\sum_{t=1}^T r_t\, y_j^{(t)}\Big\|_{\ell_1^m}
\geqslant \Big| \sum_{t=1}^T r_t \sum_{s=1}^m\langle y_j^{(t)},e_s\rangle \Big|
=\Big|\sum_{t=1}^T r_t\, \|y_j^{(t)}\|_{\ell_1^m} \Big|. $$
Thus, everywhere on the probability space,
$$ \Big\|\sum_{t=1}^T r_t\, y^{(t)} \Big\|_{X(\ell_1^m)}
=\Big\|\Big(\Big\|\sum_{t=1}^T r_t\, y_j^{(t)}\Big\|_{\ell_1^m}\Big)_{j\leqslant k}
\Big\|_{X} \geqslant \Big\|\Big(\sum_{t=1}^T r_t\, \|y_j^{(t)}\|_{\ell_1^m}\Big)_{j\leqslant k} \Big\|_{X}. $$
Since $X$ has cotype $q$ with constant $\mathrm{C}$, we conclude that
$$ \Exp\,\Big\|\sum_{t=1}^T r_t\, y^{(t)} \Big\|_{X(\ell_1^m)}^q \!
\geqslant \Exp\,\Big\|\Big(\sum_{t=1}^T r_t\, \|y_j^{(t)}\|_{\ell_1^m}\Big)_{j\leqslant k} \Big\|_{X}^q
\geqslant \frac{1}{\mathrm{C}^q} \sum_{t=1}^T \big\|\big( \|y_j^{(t)}\|_{\ell_1^m}\big)_{j\leqslant k}\big\|_X^q
=\frac{1}{\mathrm{C}^q} \sum_{t=1}^T \big\|y^{(t)}\big\|_{X(\ell_1^m)}^q. $$
This shows, in particular, that the family of coordinate projections of the vectors $|\tilde f(v)|$, $v\in[n]$,
has restricted cotype $q$ with constant $\mathrm{C}$ with respect to the norm $\|\cdot\|_{X(\ell_1^m)}$
and, consequently, the proof of \eqref{ajhgfvaeiyfvewifywv} is completed.

As we have already remarked, the estimate \eqref{eq-strong-poincare-p} follows from \eqref{ajhgfvaeiyfvewifywv}
and a variant of Matou\v{s}ek's extrapolation argument \cite{Ma97} adapted to the present setting;
as a consequence, our bound is slightly better than what is known in the general case (see, e.g.,
\cite[Lemma 4.4]{NS11} or \cite[Section~3]{Es22}). We proceed to the details.
Again, we fix the map $f$, and we define $g\colon [n]\to X$ by $g(v):= \|f(v)\|_X^{p-1}f(v)$ for every
$v\in[n]$. Clearly, $g$ is non-constant and satisfies \eqref{eq-new-empirical} and so, by~\eqref{ajhgfvaeiyfvewifywv},
\begin{equation} \label{eqkt031}
\sum_{v\in[n]}\|f(v)\|_X ^p =\sum_{v\in[n]}\|g(v)\|_X \leqslant \frac{3\Pi}{2} \sum_{\{v,w\}\in E_G}\|g(v)-g(w)\|_X.
\end{equation}
Fix a linear order $\prec$ on $[n]$ that satisfies $\|f(v)\|_X \leqslant \|f(w)\|_X$ if $v\prec w$. Then,
\begin{align} \label{eqkt033}
& \sum_{\{v,w\}\in E_G}  \|g(v)-g(w)\|_X =
\sum_{\substack{v,w\in [n]\\ \{v,w\}\in E_G, v\prec w}}
\big\| \|f(w)\|_X^{p-1}f(w)-\|f(v)\|_X^{p-1}f(v)\big\|_X \\
& \!\!\!\! \leqslant \sum_{\substack{v,w\in [n]\\ \{v,w\}\in E_G, v\prec w}} \!\!\!
\|f(v)\|_X^{p-1} \cdot \|f(w)-f(v)\|_X + \!\!\!
\sum_{\substack{v,w\in[n]\\ \{v,w\}\in E_G, v\prec w}}
\!\!\!\|f(w)\|_X  \cdot \big|\|f(w)\|_X^{p-1} - \|f(v)\|_X^{p-1} \big|. \nonumber
\end{align}
By Holder's inequality, the first term of the right-hand-side of \eqref{eqkt033} is bounded by
\begin{equation} \label{eqkt034}
\sum_{\substack{v,w\in [n]\\ \{v,w\}\in E_G, v\prec w}}
\|f(v)\|_X^{p-1} \cdot \|f(w)-f(v)\|_X \leqslant
\Big( d \sum_{v \in [n]} \|f(v)\|_X^p \Big)^{1-\frac{1}{p}}
\Big( \sum_{\{v,w\}\in E_G} \|f(v)-f(w)\|_X^p \Big)^{\frac{1}{p}}.
\end{equation}
Similarly, the second term of the right-hand-side of \eqref{eqkt033} is estimated by
\begin{align}
& \!\!\!\sum_{\substack{v,w\in [n]\\ \{v,w\}\in E_G, v\prec w}}
\|f(w)\|_X \cdot \big|\|f(w)\|_X^{p-1} - \|f(v)\|_X^{p-1} \big| \label{eqkt035} \\
& \ \ \ \ \ = \sum_{\substack{v,w\in[n]\\ \{v,w\}\in E_G, v\prec w}}
\|f(w)\|_X ^{p-1} \Big( \|f(w)\|_X^{2-p} \cdot \big|  \|f(w)\|_X^{p-1} - \|f(v)\|_X^{p-1} \big| \Big) \nonumber \\
& \ \ \ \ \ \leqslant \Big( d \sum_{v \in [n]} \|f(v)\|_X^p \Big)^{1-\frac{1}{p}}
\Bigg( \sum_{\substack{v,w\in[n]\\ \{v,w\}\in E_G, v\prec w}}\!\!\!\!
\Bigg| \|f(w)\|_X - \|f(v)\|_X \Big(\frac{\|f(v)\|_X}{\|f(w)\|_X}\Big)^{p-2} \Bigg|^p \Bigg)^{\frac{1}{p}}. \nonumber
\end{align}
Using the elementary inequality
\[ \Big|a-b\Big(\frac{b}{a}\Big)^{p-2}\Big| \leqslant \max\{1,p-1\}|a-b| \ \ \ \text{ for every } a\geqslant b\geqslant 0, \]
by \eqref{eqkt035}, we obtain that
\begin{align}
\sum_{\substack{v,w\in [n]\\ \{v,w\}\in E_G, v\prec w}} &
\|f(w)\|_X \cdot \big|\|f(w)\|_X^{p-1} - \|f(v)\|_X^{p-1} \big| \label{eqkt036} \\
& \leqslant \max\{1,p-1\} \Big( d \sum_{v \in [n]} \|f(v)\|_X^p \Big)^{1-\frac{1}{p}}
\Big( \sum_{\{v,w\}\in E_G} \|f(v) - f(w)\|_X^p \Big)^{\frac{1}{p}}. \nonumber
\end{align}
Combining \eqref{eqkt031}--\eqref{eqkt034} and \eqref{eqkt036}, we conclude that
\[ \sum_{v\in[n]}\|f(v)\|_X ^p \leqslant \frac{3\Pi}{2}\, d^{\frac{p-1}{p}} \big(\max\{1,p-1\}+1\big)\,
\Big( \sum_{v \in [n]} \|f(v)\|_X^p \Big)^{1-\frac{1}{p}}
\Big( \sum_{\{v,w\}\in E_G} \|f(v) - f(w)\|_X^p \Big)^{\frac{1}{p}}, \]
which is clearly equivalent to \eqref{eq-strong-poincare-p}.
\end{proof}
We are now ready to give the proof of Theorem~\ref{aekjfnakfn}.
\begin{proof}[Proof of Theorem~\ref{aekjfnakfn}]
We start with a standard renorming. Specifically, fix a $K$-unconditional basis $(x_j)_{j=1}^k$ of $X$
and define an equivalent norm $\seminorm{\cdot}$ on $X$ by
\[ \seminorm{\sum_{j=1}^k a_j x_j}=
\max\bigg\{ \Big\|\sum_{j=1}^k \sigma_j a_j x_j\Big\|_X\colon \sigma_1,\dots,\sigma_k\in \{-1,1\}\bigg\}. \]
Clearly, $\|x\|_X\leqslant \seminorm{x}\leqslant K \|x\|_X$ for all $x\in X$, and so, it is enough to estimate
$\gamma(G,\seminorm{\cdot}^p)$.

To this end, observe that the space $(X,\seminorm{\cdot})$
has an $1\text{-unconditional}$ basis and cotype $q$ with constant $\mathrm{C}K$. Moreover,
since \eqref{eq-poincare} is translation invariant, it suffices to consider non-constant maps
that have $0$ as an empirical median in the sense of \eqref{eq-new-empirical}.
Thus, $\gamma(G,\seminorm{\cdot}^p)$ can be estimated by Corollary \ref{cor-strong-poincare}, the triangle
inequality and the elementary inequality $(|a|+|b|)^p \leqslant 2^{p-1} (|a|^p+|b|^p)$.
\end{proof}

\subsection{Restricted cotype and almost disjoint vectors}

In this subsection we shall prove a norm-estimate for collections of vectors in spaces with an $1\text{-unconditional}$
basis, that have restricted cotype and whose support is almost disjoint.
\begin{proposition}
\label{aljfhbsjfhbskajhsb}
Let $k$ and $m$ be positive integers, let $\delta>0$, let $q\geqslant 2$, and let\, $\mathrm{C}\geqslant 1$.
Let $x\in\R_+^k$, and let $J_1,\dots,J_m$ be a collection of subsets of\, $[k]$ such that
\begin{equation} \label{ajfgaifgvweufygvuhgv}
\big|\big\{i\leqslant m\colon j\in J_i\big\}\big|\leqslant \delta m \ \ \ \text{ for every } j\in [k].
\end{equation}
Assume that $\|\cdot\|_X$ is an $1$-unconditional norm in $\R^k$ such that $\|P_{J_i}(x)\|_X\geqslant 1$
for every $i\in [m]$. Assume, moreover, that the collection of vectors $\big\{P_J(x)\colon J\subseteq [k]\big\}$
has restricted cotype $q$ with constant\, $\mathrm{C}$ with respect to the norm $\|\cdot\|_X$. Then, we have
\begin{equation} \label{eq-new3}
\|x\|_X^q\geqslant \frac{1}{\mathrm{C}^{q}}\cdot \frac{1}{\delta} \cdot \Big(\frac{1}{2}\Big)^{2q+5}.
\end{equation}
\end{proposition}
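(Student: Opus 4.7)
The plan is to extract, via a randomized argument, a sub-collection $\{J_i\}_{i \in I}$ of size roughly $1/\delta$ whose supports are pairwise disjoint after a small truncation, and then apply the restricted cotype hypothesis to the corresponding vectors in the collection $\{P_J(x) : J \subseteq [k]\}$. A first observation is that $1$-unconditionality gives $\|x\|_X \geq \|P_{J_1}(x)\|_X \geq 1$, so $\|x\|_X^q \geq 1$, which already implies the target bound whenever $\delta \geq 2^{-2q-5}/\mathrm{C}^q$. We may therefore assume $\delta < 2^{-2q-5}/\mathrm{C}^q$ and set $s := \lfloor 1/(4\delta) \rfloor$, which is very large in this regime.

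Next, pick $I \subseteq [m]$ uniformly at random from subsets of size $s$. Writing $N_j := \{i \in [m] : j \in J_i\}$, the sparsity hypothesis \eqref{ajfgaifgvweufygvuhgv} gives $\mathbb{E}_I |I \cap N_j| \leq s\delta \leq 1/4$, so the ``bad'' set of multiply-covered coordinates $T_I := \{j \in [k] : |I \cap N_j| \geq 2\}$ should be small. Suppose for a moment that we can find $I$ with $\|P_{T_I}(x)\|_X \leq 1/2$. On such a good event, the sets $J_i' := J_i \setminus T_I$ for $i \in I$ are pairwise disjoint by construction; moreover $1$-unconditionality gives $\|P_{J_i \cap T_I}(x)\|_X \leq \|P_{T_I}(x)\|_X \leq 1/2$, and the triangle inequality then yields $\|P_{J_i'}(x)\|_X \geq 1/2$ for every $i \in I$. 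Since disjoint supports together with $1$-unconditionality imply
\[
\Bigl\|\sum_{i \in I} r_i P_{J_i'}(x)\Bigr\|_X = \bigl\|P_{\bigcup_{i \in I} J_i'}(x)\bigr\|_X \leq \|x\|_X
\]
for every realization of Rademacher signs $(r_i)$, applying restricted cotype to the sub-collection $\{P_{J_i'}(x)\}_{i \in I}$ gives
\[
\|x\|_X^q \;\geq\; \frac{1}{\mathrm{C}^q} \sum_{i \in I} \|P_{J_i'}(x)\|_X^q \;\geq\; \frac{s}{\mathrm{C}^q \cdot 2^q},
\]
matching the desired estimate up to the explicit constant $2^{2q+5}$.

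The main obstacle is producing such a good $I$: namely, establishing an upper bound of order $1/4$ on $\mathbb{E}_I \|P_{T_I}(x)\|_X$, so that Markov's inequality supplies an $I$ with $\|P_{T_I}(x)\|_X \leq 1/2$. Using the pointwise estimate $\mathbf{1}[j \in T_I] \leq \binom{|I \cap N_j|}{2}$ together with $1$-unconditionality (which makes the norm monotone on non-negative vectors), we obtain
\[
\|P_{T_I}(x)\|_X \;\leq\; \Bigl\|\sum_{\{i,i'\} \subseteq I} P_{J_i \cap J_{i'}}(x)\Bigr\|_X \;\leq\; \sum_{\{i,i'\} \subseteq I} \|P_{J_i \cap J_{i'}}(x)\|_X,
\]
so the problem reduces to controlling $\sum_{\{i,i'\} \subseteq [m]} \|P_{J_i \cap J_{i'}}(x)\|_X$. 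The naive estimate $\|P_{J_i \cap J_{i'}}\|_X \leq \|x\|_X$ is too weak; my plan is to invoke restricted cotype a \emph{second} time, applied to the family of pairwise-intersection vectors $\{P_{J_i \cap J_{i'}}(x)\}_{i \neq i'}$, together with a dichotomy: either the pairwise-intersection sum is small, in which case a random $I$ works and the previous paragraph concludes the argument, or it is so large that the Rademacher-sum upper bound $\|\sum_{\{i,i'\}} r_{(i,i')}P_{J_i \cap J_{i'}}(x)\|_X \leq (\delta m)^2 \|x\|_X$ already forces $\|x\|_X$ to be large. Balancing these two regimes should produce the explicit constant $2^{2q+5}$ in the final bound.
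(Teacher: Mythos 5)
Your high-level strategy — extract roughly $1/\delta$ indices whose restricted supports are (made) pairwise disjoint, then feed the resulting vectors $P_{J_i'}(x)$ into the restricted-cotype inequality, using $1$-unconditionality to bound the Rademacher sum by $\|x\|_X$ — is the same strategy the paper uses, and the endgame you describe (once you have $\geq s$ disjoint pieces each of norm $\geq 1/2$) is correct. But the mechanism you propose for achieving disjointness has a genuine gap that you acknowledge and do not close. You sample $I$ of size $s$ all at once, set $T_I := \{j : |I \cap N_j| \geq 2\}$, and need a \emph{single} $I$ for which $\|P_{T_I}(x)\|_X \leq 1/2$. The obstruction is structural: Jensen's inequality for a convex norm gives $\mathbb{E}\|Y\| \geq \|\mathbb{E}Y\|$, i.e.\ it produces \emph{lower} bounds on expected norms, whereas you need an \emph{upper} bound on $\mathbb{E}_I\|P_{T_I}(x)\|_X$. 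Your reduction to the pairwise sum $\sum_{\{i,i'\}}\|P_{J_i\cap J_{i'}}(x)\|_X$ is a sensible move, but the dichotomy you sketch for controlling it does not go through: restricted cotype lower-bounds $\sum_\alpha\|y_\alpha\|_X^q$ by $\mathrm{C}^q$ times a Rademacher moment, so when you ``invoke restricted cotype a second time'' on the intersection vectors you control the $q$-th-power sum, not the first-power sum you actually need, and passing from one to the other by power means introduces a factor of (the number of nonempty intersections)$^{1-1/q}$, which can grow with $m$ and is not controlled by $\delta$. There is also a minor mismatch: restricted cotype is a property of the indexed collection $\{P_J(x):J\subseteq[k]\}$, so distinct pairs $\{i,i'\}$ producing the same intersection set $J$ only contribute once, not with multiplicity.

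The paper sidesteps the entire upper-bound difficulty by making the selection \emph{sequential} rather than simultaneous. It first shrinks each $J_i$ (without changing the hypotheses) so that either $|J_i|=1$ or $1\leqslant\|P_{J_i}(x)\|_X\leqslant 2$, and splits into two cases depending on whether at least half the indices are singletons. In the non-singleton case it draws $i_1,\dots,i_p$ i.i.d.\ uniformly with $p=\lceil 1/(4\delta)\rceil$ and, for each $r$, restricts $P_{J_{i_r}}(x)$ to $J_{i_1}^\complement\cap\cdots\cap J_{i_{r-1}}^\complement$. These residuals are pairwise disjoint by construction, and now the probabilistic estimate is of the \emph{lower}-bound type Jensen is good at: for each fixed coordinate $j$ the chance of surviving all earlier complements is at least $1/2$, so, averaging over the earlier choices and then using convexity, $\mathbb{E}\big\|P_{J_{i_1}^\complement\cap\cdots\cap J_{i_{r-1}}^\complement}(P_{J_{i_r}}(x))\big\|_X\geqslant\tfrac12$. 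Summing and fixing a good realization gives total residual mass $\geqslant p/2$; since each term is at most $2$ (by the preliminary reduction, the step you skip), at least $p/8$ residuals have norm $\geqslant 1/4$, and restricted cotype applied to this disjoint family gives $\|x\|_X^q\geqslant\mathrm{C}^{-q}(p/8)(1/4)^q$. If you want to salvage your version, the fix is to abandon ``find one $I$ that is simultaneously good'' and instead remove collisions one index at a time (or symmetrically set $J_i' := J_i\setminus\bigcup_{i'\in I\setminus\{i\}}J_{i'}$ and argue per coordinate), so that the relevant expectation is a norm of a nonnegative random vector whose mean dominates $\tfrac12 P_{J_i}(x)$ coordinatewise and Jensen applies in the favorable direction; as written, your argument does not close.
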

\begin{proof}
Clearly, we may assume that $\delta\leqslant \frac{1}{8}$. Note that if we replace any set $J_i$ with a subset $J_i' \subseteq J_i$
satisfying $\|P_{J_i'}(x)\|_X\geqslant 1$, the conditions of the proposition are still met. So, by taking subsets of each of the $J_i$
when appropriate, we may also assume that for each $i\in [m]$, we have that $\|P_{J_i}(x)\|_X\geqslant 1$ and,
either $\|P_{J_i}(x)\|_X\leqslant 2$ or $|J_i|=1$.

First, suppose that
$$ \big|\{i\leqslant m\colon |J_i|=1\}\big|\geqslant \frac{m}{2}. $$
In view of \eqref{ajfgaifgvweufygvuhgv}, there are at least $1/(2\delta)$
indices $j$ such that $\|P_{\{j\}}(x)\|_X\geqslant 1$. Hence, by the definition of restricted cotype
and the assumption that the norm $\|\cdot\|_X$ is $1$-unconditional, we obtain that
$$ \|x\|_X^q \geqslant \frac{1}{\mathrm{C}^q}\cdot \frac{1}{2\delta}, $$
that clearly implies \eqref{eq-new3}.

Next, assume that
$$ \big|\{i\leqslant m\colon |J_i|>1\}\big|\geqslant \frac{m}{2}, $$
and therefore, $2\geqslant \|P_{J_i}(x)\|_X \geqslant 1$ for at least $m/2$ indices $i\in [m]$.
Without loss of generality, this holds for all $1\leqslant i\leqslant \lceil m/2\rceil$.
Set $p:=\lceil 1/(4\delta)\rceil$, and let $i_1,i_2,\dots,i_p$ be i.i.d. uniform
random indices in $\big[\lceil m/2\rceil\big]$. We claim that for every $r \in [p]$,
$$ \Exp\,\big\| P_{J_{i_1}^\complement\cap\dots\cap J_{i_{r-1}}^\complement}\big( P_{J_{i_r}}(x)\big)\big\|_X \geqslant \frac{1}{2}, $$
where $J_i^\complement$ denotes the complement of $J_i$ in $[k]$; here, we use the convention that if $r=1$,
then $J_{i_1}^\complement\cap\dots\cap J_{i_{r-1}}^\complement = [k]$ and, consequently, this estimate is trivial.
So, fix $r\in \{2,\dots, p\}$ and notice that for every $j\in [k]$ we have
\begin{align*}
\mathbb{P}\Bigg[ j\in \bigcap_{s=1}^{r-1} J^\complement_{i_s} \Bigg]
& \ = \,\prod_{s=1}^{r-1} \mathbb{P}\big[j\notin J_{i_s}\big] =
\Bigg( \frac{\big|\big\{i\in \big[\lceil m/2\rceil\big]:\; j\notin J_i\big\}\big|}{\lceil m/2\rceil} \Bigg)^{r-1} \\
& \stackrel{\eqref{ajfgaifgvweufygvuhgv}}{\geqslant}
\Bigg( \frac{\lceil m/2\rceil - \delta m}{\lceil m/2\rceil} \Bigg)^{r-1}
\geqslant (1-2\delta)^{r-1} \geqslant 1 - 2 \delta (r-1) \geqslant \frac{1}{2}.
\end{align*}
Thus, by Jensen's inequality and the $1$-unconditionality of $\|\cdot\|_X$, we have
$$ \Exp \Big[\Exp\,\big[\big\|
P_{J_{i_1}^\complement\cap\dots\cap J_{i_{r-1}}^\complement}\big( P_{J_{i_r}}(x)\big)
\big\|_X\;\big|\;i_r\big]\Big]
\geqslant \frac{1}{2}\Exp\,\big\| P_{J_{i_r}}(x)\big\|_X\geqslant \frac{1}{2}. $$
Therefore, there is a realization of indices $i_1,\dots,i_p$ in $\big[\lceil m/2\rceil\big]$ such that
$$ \sum_{r=1}^p\big\| P_{J_{i_1}^\complement\cap\dots\cap J_{i_{r-1}}^\complement}\big( P_{J_{i_r}}(x)\big) \big\|_X \geqslant \frac{p}{2}. $$
Moreover, by the $1$-unconditionality of $\|\cdot\|_X$, we have
$\big\| P_{J_{i_1}^\complement\cap\dots\cap J_{i_{r-1}}^\complement}\big( P_{J_{i_r}}(x)\big)\big\|_X\leqslant 2$
for every $r\in [p]$. Thus, the number of indices $r\in [p]$ that satisfy
$$ \big\| P_{J_{i_1}^\complement\cap\dots\cap J_{i_{r-1}}^\complement}\big( P_{J_{i_r}}(x)\big)\big\|_X\geqslant\frac{1}{4}, $$
is at least $p/8$. Applying the definition of restricted $q$ cotype
and keeping in mind that the norm $\|\cdot\|_X$ is $1$-unconditional, we obtain that
$$ \|x\|_X^q \geqslant \frac{1}{\mathrm{C}^{q}}\cdot \frac{p}{8}\cdot \Big(\frac{1}{4}\Big)^q. $$
Invoking the choice of $p$, the result follows.
\end{proof}

\section{Proof of Theorem \ref*{afjnobgiugbo}} \label{sec4}

Throughout the proof, the parameters $q, \mathrm{C}, d, \alpha,\varepsilon,L$  are fixed.
Let $G$ be a $d$-regular graph on $n$ vertices that satisfies property $\mathrm{Expan}(\alpha,\varepsilon,L)$,
let $\|\cdot\|_X$ be an $1$-unconditional norm in $\R^k$ ($k\geqslant 1$), and fix a map
$f\colon [n]\to\{-1,0,1\}^k$ such that $0$ is an empirical median of $f$ in the sense of \eqref{eq-new-empirical}
and, moreover, the collection of vectors $\big\{P_J\big(|f(v)|\big)\colon v\in[n] \text{ and } J\subseteq[k]\big\}$
has restricted cotype $q$ with constant $\mathrm{C}$ with respect to the norm $\|\cdot\|_X$.

\subsection{Edges with jumps}

Our goal in this subsection is to define, for every vertex $v\in [n]$ and every coordinate $j\in [k]$ such that $f(v)_j\neq 0$,
collections of edges $\{w,w'\}$ with ``jumps'' at the $j$-th component---that is, $f(w)_j-f(w')_j\neq 0$---that
satisfy certain special properties. Those collections of edges will be employed in the next subsection to match
the contribution of $f(v)_j$, $v\in[n]$, to the sum $\sum_{v\in[n]}\|f(v)\|_X$.
The main statement in this subsection is Lemma~\ref{afjhsbfiasdjfhbdsifjhb} below.

For our definitions of subsets of edges and vertices below, we will need a series of positive numbers that sum up to one,
such that the terms of the series do not decay to zero ``too fast''\!. The particular choice of the series below is rather arbitrary.
\begin{definition}
Let $(a_i)_{i=1}^\infty$ be the sequence of positive numbers defined by
$$ a_i:=\frac{6}{\pi^2}\,\frac{1}{i^2}. $$
In particular, we have $\sum_{i=1}^\infty a_i= 1$.
\end{definition}

\begin{definition} \label{akhgfvhgfvdshgfvdjhfgv}
Let $v\in [n]$, and let $j\in [k]$. For every integer $\ell\geqslant 1$, we define
\[ E(v,j;\ell):=\big\{\{w,w'\}\in E_G\colon \dist_G(\{w,w'\},v)\leqslant \ell-1 \text{ and } f(w)_j\neq f(w')_j\big\}. \]
More generally, for every nonempty subset $S\subseteq [n]$, we set
\[ E(S,j;\ell):=\big\{\{w,w'\}\in E_G\colon \dist_G(\{w,w'\},S)\leqslant \ell-1 \text{ and } f(w)_j\neq f(w')_j\big\}. \]
\end{definition}

\begin{lemma}[Existence of large subsets $E(S,j;\ell)$]\label{afhbaofjhbojhb}
Let $j\in [k]$, and let $S$ be a nonempty set of vertices such that
$f(v)_j=f(w)_j\neq 0$ for all $v,w\in S$. Then, there exists a positive integer
$$\ell_0\leqslant \min\Big\{\ell\geqslant 1\colon \alpha(d-1)^\ell\,|S| \geqslant \frac{3n}{4}\Big\}$$
such that
$$ |E(S,j;\ell_0)|\geqslant \frac{\alpha\, a_{\ell_0}}{6}(d-1)^{\ell_0-1}\,|S|. $$
\end{lemma}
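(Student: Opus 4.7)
The plan is to argue by contradiction. Let $c := f(v)_j \in \{-1,+1\}$ denote the common value on $S$ and, WLOG, take $c = +1$; set $W := \{w \in [n] : f(w)_j \leqslant 0\}$, which by the empirical median condition \eqref{eq-new-empirical} satisfies $|W| \geqslant n/2$ and consists precisely of the vertices with $f(w)_j \neq c$. Let $\ell^* := \min\{\ell \geqslant 1 : \alpha(d-1)^\ell |S| \geqslant 3n/4\}$, the claimed upper bound on $\ell_0$, and put $B := \{v : \dist_G(v,S) \leqslant \ell^*\}$; part $(A)$ of the long-range expansion property yields $|B| \geqslant 3n/4$, so $|W \cap B| \geqslant |W| + |B| - n \geqslant n/4$ by inclusion--exclusion. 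Suppose for contradiction that $|E(S,j;\ell)| < \frac{\alpha a_\ell}{6}(d-1)^{\ell-1}|S|$ for every $\ell \in \{1,\ldots,\ell^*\}$; the plan is to derive $|W \cap B| < n/4$ and conclude.

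The core step is a charging scheme from $W \cap B$ into ``last-jump'' edges. For each $w \in W \cap B$, fix a shortest path $w = w_0, w_1, \ldots, w_t \in S$, so $\dist_G(w_i, S) = t - i$ and $t \leqslant \ell^*$. Since $f(w_0)_j \neq c$ and $f(w_t)_j = c$, the largest index $i^*$ with $f(w_{i^*})_j \neq c$ is well-defined; set $e_w := \{w_{i^*}, w_{i^*+1}\}$, a jump edge, and $\ell(w) := t - i^*$. Because the path is shortest, $\dist_G(e_w, S) = \ell(w) - 1$, so $e_w \in E(S,j;\ell(w)) \setminus E(S,j;\ell(w)-1)$, and crucially $w_{i^*}$ is \emph{strictly farther} from $S$ than $w_{i^*+1}$; consequently $w$ lies in the branch emanating from $w_{i^*}$ and avoiding $w_{i^*+1}$, at distance $i^* = t - \ell(w) \leqslant \ell^* - \ell(w)$. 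By the standard BFS bound in a graph of maximum degree $d$, for any fixed edge $e \in E(S,j;\ell) \setminus E(S,j;\ell-1)$, the number of $w \in W \cap B$ with $e_w = e$ is at most $\sum_{r=0}^{\ell^* - \ell}(d-1)^r \leqslant \frac{(d-1)^{\ell^* - \ell + 1}}{d-2}$.

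Double-counting and the contradiction hypothesis then give
\begin{equation*}
|W \cap B| \;\leqslant\; \sum_{\ell=1}^{\ell^*} |E(S,j;\ell)| \cdot \frac{(d-1)^{\ell^* - \ell + 1}}{d-2} \;<\; \frac{\alpha |S| (d-1)^{\ell^*}}{6(d-2)} \sum_{\ell=1}^{\ell^*} a_\ell.
\end{equation*}
When $\ell^* \geqslant 2$, the minimality of $\ell^*$ gives $\alpha|S|(d-1)^{\ell^*} < 3n(d-1)/4$; combined with $\sum_\ell a_\ell \leqslant 1$ and $\frac{d-1}{d-2} \leqslant 2$ for $d \geqslant 3$, the right-hand side is strictly less than $n/4$. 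When $\ell^* = 1$, the sum collapses to $a_1 = 6/\pi^2$ and $\alpha|S|(d-1) \leqslant n(d-1)$, giving $|W \cap B| < \frac{n(d-1)}{\pi^2(d-2)} < n/4$ (using $2/\pi^2 < 1/4$). Either way $|W \cap B| < n/4$, contradicting the inclusion--exclusion lower bound. The only delicate point is verifying that the last-jump edge has its far endpoint \emph{strictly} farther from $S$, which is what justifies the BFS-tree multiplicity bound avoiding the $S$-side; everything else is routine bookkeeping of a geometric series.
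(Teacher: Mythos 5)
Your proof is correct, and it takes a genuinely different route from the paper's. The common starting point is the same: define $\ell^*$, invoke part (\hyperref[Part-A]{$A$}) to get $|B|\geqslant 3n/4$, and combine with the empirical-median condition to find at least $n/4$ vertices $w$ with $f(w)_j\neq 1$ within distance $\ell^*$ of $S$. From there the paper organizes these vertices into distance spheres $W(j;\ell):=\{w\colon \mathrm{dist}_G(S,w)=\ell,\ f(w)_j\neq 1\}$, uses the weights $a_\ell$ to locate the first scale $\ell_0$ at which a cumulative weighted threshold is reached, and extracts jump edges from the boundary between consecutive spheres via the inequality $|E(S,j;\ell_0)|\geqslant |W(j;\ell_0)|-(d-1)|W(j;\ell_0-1)|$. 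You instead argue by contradiction, charging every such vertex $w$ to its ``last-jump'' edge $e_w$ along a fixed shortest path to $S$, bounding the multiplicity of each edge by a BFS count, and double-counting against the geometric sum. Both are valid; your version bypasses the identification of a single good scale and beats the sum globally. The one place that deserves a little more rigour (which you do flag) is the multiplicity bound: the correct justification is that $w$ sits at distance $t-\ell(w)$ from the far endpoint $u$ of $e_w$, and by the triangle inequality through the near endpoint $v$, no shortest path from $u$ to $w$ can pass through $v$ (otherwise $\mathrm{dist}_G(w,S)\leqslant (t-\ell(w)-1)+(\ell(w)-1)<t$); hence every such $w$ is reached by a non-backtracking walk of length $t-\ell(w)$ from $u$ whose first step avoids $v$, and there are at most $(d-1)^{t-\ell(w)}$ of those. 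The informal ``branch'' phrasing glosses over the fact that $G$ is not a tree, but the argument behind it is sound.
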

\begin{proof}
Without loss of generality, we may assume that $f(v)_j=1$ for all $v\in S$.
For every integer $\ell\geqslant 1$, set
$$ W(j;\ell):=\big\{ w\in [n]\colon \dist_G(S,w)= \ell \text{ and }  f(w)_j \neq 1 \big\}. $$
Further, set $\ell':=\min\big\{\ell\geqslant 1\colon \alpha(d-1)^\ell\,|S| \geqslant \frac{3n}{4}\big\}$.
In view of part (\hyperref[Part-A]{$A$}) of the long-range expansion property,
$$ \big|\big\{w\in[n]\colon \dist_G(w,S)\leqslant \ell'\big\}\big| \geqslant \frac{3n}{4}, $$
whereas, by the assumptions on $f$,
$$ \big|\big\{w\in[n]\colon f(w)_j\neq 1\big\}\big| \geqslant \frac{n}{2}. $$
Therefore,
$$ \big|\big\{w\in[n]\colon \dist_G(w,S)\leqslant \ell' \text{ and } f(w)_j \neq 1\big\}\big|
\geqslant \frac{n}{4}\geqslant \frac{\alpha}{3}(d-1)^{\ell'-1}\,|S|, $$
implying that there exists $\ell\in [\ell']$ such that
$$ |W(j;\ell)|\geqslant \frac{\alpha}{6}(d-1)^{\ell-1}\,|S|. $$
Define $\ell_0\leqslant \ell'$ to be the smallest positive integer such that
$$ |W(j;\ell_0)|\geqslant \Big(\sum_{i=1}^{\ell_0} a_i\Big)\, \frac{\alpha}{6}(d-1)^{\ell_0-1}\,|S|. $$
(By the previous argument, $\ell_0$ is well-defined.) If $\ell_0=1$, then we immediately obtain that
$|E(S,j;\ell_0)|\geqslant \frac{\alpha\,a_1}{6}(d-1)^{\ell_0-1}\,|S|$, implying the result.

Otherwise, if $\ell_0>1$, then we have
$$ |W(j;\ell_0-1)|< \Big(\sum_{i=1}^{\ell_0-1} a_i\Big)\, \frac{\alpha}{6}(d-1)^{\ell_0-2}\,|S|. $$
Moreover, each vertex $w\in W(j;\ell_0-1)$ is adjacent to at most $d-1$ vertices in $W(j;\ell_0)$. Thus,
\[ |E(S,j;\ell_0)|\geqslant |W(j;\ell_0)|-(d-1)|W(j;\ell_0-1)| \geqslant
\frac{\alpha\, a_{\ell_0}}{6}(d-1)^{\ell_0-1}\,|S|. \qedhere\]
\end{proof}

\begin{definition}
For every $v\in [n]$ and every $j\in [k]$, we define
$$ \ell_{v,j}:= \min\Big\{\ell\geqslant 1\colon |E(v,j;\ell)|\geqslant \frac{\alpha\,a_\ell}{6} (d-1)^{\ell-1}\Big\}. $$
Moreover, we set
$$ V_{+}(j;\ell):= \big\{v\in[n]\colon f(v)_j=1 \text{ and } \ell_{v,j}=\ell\big\}, $$ 
$$ V_{-}(j;\ell):= \big\{v\in[n]\colon f(v)_j=-1 \text{ and } \ell_{v,j}=\ell\big\}. $$
\end{definition}

\begin{corollary}[Estimates for $\ell_{v,j}$, $|V_{+}(j;\ell)|$, $|V_{-}(j;\ell)|$] \label{cor3.5-new}
For every $v\in[n]$ and every $j\in [k]$ such that $f(v)_j\neq 0$, we have
$$\alpha(d-1)^{\ell_{v,j}-1}\leqslant \frac{3n}{4}.$$
Furthermore, for every integer $\ell\geqslant 1$,
\begin{equation}\label{aifhgbvfkhgwvfkhgf}
\frac{3n}{4} \geqslant \alpha(d-1)^{\ell-1}\,|V_{+}(j;\ell)|
\ \ \ \text{ and } \ \ \ \frac{3n}{4} \geqslant \alpha(d-1)^{\ell-1}\,|V_{-}(j;\ell)|.
\end{equation}
\end{corollary}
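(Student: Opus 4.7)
Both bullets are direct consequences of Lemma \ref{afhbaofjhbojhb}, the first by specializing to singletons and the second by specializing to the level sets of $\ell_{v,j}$. The first thing I would do is verify that $\ell_{v,j}$ is even finite and record the quantitative bound simultaneously: apply Lemma \ref{afhbaofjhbojhb} to $S:=\{v\}$, whose hypothesis $f(v)_j = f(w)_j \neq 0$ for all $v,w \in S$ is vacuously satisfied since $f(v)_j \neq 0$. Writing $\ell' := \min\{\ell \geqslant 1 : \alpha(d-1)^\ell \geqslant 3n/4\}$, the lemma produces a positive integer $\ell_0 \leqslant \ell'$ with $|E(v,j;\ell_0)| \geqslant \frac{\alpha a_{\ell_0}}{6}(d-1)^{\ell_0-1}$. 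Hence $\ell_{v,j}$ is well-defined and $\ell_{v,j} \leqslant \ell_0 \leqslant \ell'$, so by minimality of $\ell'$ I get $\alpha (d-1)^{\ell_{v,j}-1} \leqslant \alpha (d-1)^{\ell'-1} < \tfrac{3n}{4}$.

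For the second claim I would argue by contradiction and symmetry (handling only $V_+(j;\ell)$ in detail, since $V_-(j;\ell)$ is identical). Fix $j \in [k]$ and $\ell \geqslant 1$, assume $\alpha(d-1)^{\ell-1}|V_+(j;\ell)| > \tfrac{3n}{4}$, and set $S := V_+(j;\ell)$, which is nonempty and has $f(v)_j = 1$ for every $v \in S$. Apply Lemma \ref{afhbaofjhbojhb} to $S$: the assumption forces the upper bound in the lemma to satisfy $\min\{\ell^\ast \geqslant 1 : \alpha (d-1)^{\ell^\ast} |S| \geqslant \tfrac{3n}{4}\} \leqslant \ell - 1$, so the lemma produces $\ell_0 \leqslant \ell-1$ with $|E(S,j;\ell_0)| \geqslant \frac{\alpha a_{\ell_0}}{6}(d-1)^{\ell_0-1}|S|$. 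The elementary observation $E(S,j;\ell_0) \subseteq \bigcup_{v \in S} E(v,j;\ell_0)$, which holds because any edge at distance $\leqslant \ell_0-1$ from $S$ is at that distance from some specific $v \in S$, combined with the union bound and pigeonhole, yields a vertex $v \in S$ with $|E(v,j;\ell_0)| \geqslant \frac{\alpha a_{\ell_0}}{6}(d-1)^{\ell_0-1}$. This forces $\ell_{v,j} \leqslant \ell_0 \leqslant \ell - 1 < \ell$, contradicting $v \in V_+(j;\ell)$, which demands $\ell_{v,j} = \ell$.

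I do not anticipate a genuine obstacle here; the corollary is a bookkeeping extraction from Lemma \ref{afhbaofjhbojhb}. The only mildly subtle point is ensuring the strict-versus-weak inequalities around the threshold $\tfrac{3n}{4}$ are handled consistently with the definition of $\ell'$ inside Lemma \ref{afhbaofjhbojhb}, and that the pigeonhole step uses the containment $E(S,j;\ell_0) \subseteq \bigcup_{v \in S} E(v,j;\ell_0)$ rather than an equality.
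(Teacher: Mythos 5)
Your argument follows the paper's proof almost exactly: both assertions reduce to Lemma \ref{afhbaofjhbojhb} applied to $S=\{v\}$ and to $S=V_\pm(j;\ell)$ respectively, followed by the covering $E(S,j;\ell_0)\subseteq\bigcup_{v\in S}E(v,j;\ell_0)$ and a pigeonhole step. The paper phrases the second part directly (extracting a vertex $v'$ with $\ell=\ell_{v',j}\leqslant p\leqslant\min\{\tilde\ell\geqslant 1\colon \alpha(d-1)^{\tilde\ell}|V_\pm(j;\ell)|\geqslant 3n/4\}$), while you contrapose; these are the same argument.

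The one place your write-up falls down is the boundary case $\ell=1$. Your claim that the contradiction hypothesis $\alpha(d-1)^{\ell-1}|S|>3n/4$ forces $\min\{\ell^*\geqslant 1\colon\alpha(d-1)^{\ell^*}|S|\geqslant 3n/4\}\leqslant\ell-1$ has no content when $\ell=1$: that minimum is by definition at least $1$, so Lemma \ref{afhbaofjhbojhb} only gives $\ell_0\leqslant 1$, and the pigeonhole produces a vertex with $\ell_{v,j}\leqslant 1=\ell$, which is perfectly consistent with $v\in V_+(j;1)$ and yields no contradiction. For $\ell=1$ the bound must instead be read off directly from the median hypothesis \eqref{eq-new-empirical}: since $f$ takes values in $\{-1,0,1\}^k$, $|V_+(j;1)|\leqslant|\{v\colon f(v)_j=1\}|\leqslant n/2$, hence $\alpha(d-1)^{0}|V_+(j;1)|\leqslant n/2\leqslant 3n/4$. (The paper's own closing sentence, ``implies, by the choice of $p$,'' suffers the same implicit lacuna, since minimality of the threshold only extracts information when $\ell-1\geqslant 1$; so this is a gloss you inherited rather than introduced, but the base case does need to be stated. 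The analogous remark applies to your ``$\alpha(d-1)^{\ell'-1}<3n/4$ by minimality of $\ell'$'' in the first assertion when $\ell'=1$, though there the bound $\alpha\leqslant 1<3n/4$ is even more trivial.)
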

\begin{proof}
For the first assertion note that, by Lemma~\ref{afhbaofjhbojhb},
$\ell_{v,j}\leqslant \min\{\ell\geqslant 1\colon \alpha(d-1)^\ell \geqslant \frac{3n}{4}\}$.
For the second assertion, assume that $V_{+}(j;\ell)\neq\emptyset$
and observe that, by Lemma~\ref{afhbaofjhbojhb}, there exists
$p\leqslant \min\big\{\tilde\ell\geqslant 1\colon \alpha(d-1)^{\tilde\ell}\,|V_{+}(j;\ell)| \geqslant \frac{3n}{4}\big\}$ such that
$$ |E(V_{+}(j;\ell),j;p)|\geqslant \frac{\alpha\, a_{p}}{6}(d-1)^{p-1}\,|V_{+}(j;\ell)|. $$
For every edge $e\in E(V_{+}(j;\ell),j;p)$, there is a vertex $v_e\in V_{+}(j;\ell)$ with $\dist_G(e,v_e)\leqslant p-1$.
Thus, by the pigeonhole principle, there exists a vertex $v'\in V_{+}(j;\ell)$ with
$$ |E(v',j;p)| =\big|\big\{\{w,w'\}\in E_G\colon \dist_G(\{w,w'\},v')\leqslant p-1 \text{ and }
f(w)_j\neq f(w')_j\big\}\big| \geqslant \frac{\alpha\, a_{p}}{6}(d-1)^{p-1}. $$
Thus, $\ell_{v',j}\leqslant p$ and, since $v'\in V_+(j;\ell)$, we have that $\ell_{v',j}=\ell$. Hence,
$\ell\leqslant p$ that implies, by the choice of $p$, that
$$ \alpha(d-1)^{\ell-1}\,|V_{+}(j;\ell)|\leqslant \frac{3n}{4}. $$
The bound for $V_{-}(j;\ell)$ is proved similarly.
\end{proof}
Given an edge $e\in E_G$, Definition~\ref{akhgfvhgfvdshgfvdjhfgv} and Corollary \ref{cor3.5-new}
do not provide estimates for the number of vertices $v\in V_{+}(j;\ell)$ such that $e\in E(v,j;\ell)$.
Such vertex-edge configurations will be an obstacle to applying the restricted cotype property in order
to prove the main result. In the next lemma, we use the long-range expansion property of $G$ (more precisely,
part (\hyperref[Part-B]{$B$}) of Definition \ref{def-lre}) to construct large subsets of $E(v,j;\ell_{v,j})$
such that no edge of $G$ is shared by too many of those subsets.
\begin{lemma}\label{afjhsbfiasdjfhbdsifjhb}
Set
\begin{equation} \label{eq_defn_L_tilde}
\tilde{L} := 2\, \Big( \frac{20\, d\, L}{\alpha\, \varepsilon} \Big)^8.
\end{equation}
Let $j\in [k]$, let $\ell\geqslant 1$ be an integer, and let $\sigma\in \{+,-\}$ such that $V_{\sigma}(j;\ell)$ is nonempty.
Then, for every $v\in V_{\sigma}(j;\ell)$, there exists a subset $\tilde{E}(v,j)$ of $E(v,j;\ell)$
with the following properties.
\begin{itemize}
\item[(i)] For every $v\in V_{\sigma}(j;\ell)$,
\begin{equation} \label{eqkt010}
|\tilde E(v,j)|\geqslant \frac{\alpha\,a_\ell}{12}\,(d-1)^{\ell-1}.
\end{equation}
\item[(ii)] For every $e\in E_G$,
\begin{equation} \label{eqkt011}
\big|\{v\in V_{\sigma}(j;\ell)\colon e\in \tilde E(v,j)\}\big| \leqslant \tilde L(d-1-\varepsilon)^\ell.
\end{equation}
\end{itemize}
\end{lemma}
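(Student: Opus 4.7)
The plan is to distinguish two cases according to whether $L(d-1-\varepsilon)^\ell$ is dominated by $\tfrac{\alpha a_\ell}{12}(d-1)^{\ell-1}$ or not. Throughout, fix $j \in [k]$, $\ell \geq 1$, and (without loss of generality) take $\sigma = +$, and assume $V_+(j;\ell)$ is nonempty. By Corollary~\ref{cor3.5-new}, $\alpha(d-1)^{\ell-1}|V_+(j;\ell)| \leq \tfrac{3n}{4}$, so the size hypothesis of part $(B)$ of $\mathrm{Expan}(\alpha,\varepsilon,L)$ is satisfied for $V_+(j;\ell)$ and every subset of it.

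\textbf{Case 1:} $L(d-1-\varepsilon)^\ell \leq \tfrac{\alpha a_\ell}{12}(d-1)^{\ell-1}$. Here I construct an ordering $v_1,\dots,v_{|V_+(j;\ell)|}$ of $V_+(j;\ell)$ greedily via iterated application of part $(B)$. At step $s$, set $V^{(s-1)} := V_+(j;\ell) \setminus \{v_1,\dots,v_{s-1}\}$ and $T^{(s-1)} := \{e \in E_G : |\{v \in V^{(s-1)} : \dist_G(v,e) \leq \ell-1\}| \geq L(d-1-\varepsilon)^\ell\}$; part $(B)$ then produces $v_s \in V^{(s-1)}$ with $|\{e \in T^{(s-1)} : \dist_G(v_s,e)\leq\ell-1\}| \leq L(d-1-\varepsilon)^\ell$, and I set $\tilde E(v_s,j) := E(v_s,j;\ell) \setminus T^{(s-1)}$. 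Bound (i) is immediate: $|\tilde E(v_s,j)| \geq |E(v_s,j;\ell)| - L(d-1-\varepsilon)^\ell \geq \tfrac{\alpha a_\ell}{6}(d-1)^{\ell-1} - L(d-1-\varepsilon)^\ell \geq \tfrac{\alpha a_\ell}{12}(d-1)^{\ell-1}$ by the Case~1 hypothesis. For bound (ii), fix $e \in E_G$ and let $s_1 < \cdots < s_m$ be the steps at which $e \in \tilde E(v_{s_i},j)$; the vertices $v_{s_1},\dots,v_{s_m}$ are distinct, all within distance $\ell-1$ of $e$, and all lie in $V^{(s_1-1)}$ (since the $V^{(s-1)}$ are nested decreasing). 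Since $e \notin T^{(s_1-1)}$, we have $|\{v \in V^{(s_1-1)} : \dist_G(v,e) \leq \ell-1\}| < L(d-1-\varepsilon)^\ell$, and therefore $m < L(d-1-\varepsilon)^\ell \leq \tilde L(d-1-\varepsilon)^\ell$.

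\textbf{Case 2:} $L(d-1-\varepsilon)^\ell > \tfrac{\alpha a_\ell}{12}(d-1)^{\ell-1}$. In this case the iterative cleaning from Case~1 would strip $\tilde E(v_s,j)$ too heavily, so I instead take the trivial choice $\tilde E(v,j) := E(v,j;\ell)$ for every $v \in V_+(j;\ell)$, which makes (i) tautological. The key observation is that the Case~2 hypothesis, rewritten as $\bigl(\tfrac{d-1}{d-1-\varepsilon}\bigr)^{\ell-1} < \tfrac{2\pi^2 L \ell^2 (d-1-\varepsilon)}{\alpha}$, combined with $\ln\bigl(\tfrac{d-1}{d-1-\varepsilon}\bigr) \geq \tfrac{\varepsilon}{2(d-1)}$, forces $\ell \leq \ell_0$ for some $\ell_0 = O\bigl(\tfrac{d}{\varepsilon}\ln\tfrac{Ld}{\alpha\varepsilon}\bigr)$. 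The multiplicity in (ii) is then bounded by $|\{v \in V_G : \dist_G(v,e) \leq \ell-1\}| \leq 2(d-1)^\ell$, and combining the Case~2 inequality with the bound on $\ell$ shows this quantity is absorbed by $\tilde L(d-1-\varepsilon)^\ell$ for the stated $\tilde L$.

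The main obstacle is the Case~2 bookkeeping: tracking the exponential factor $\bigl(\tfrac{d-1}{d-1-\varepsilon}\bigr)^{\ell_0}$ through the estimates and verifying that the explicit constant $\tilde L = 2(20dL/(\alpha\varepsilon))^8$ does in fact suffice. The generous exponent~$8$ in $\tilde L$ provides ample slack for this polynomial-vs-polylogarithmic calculation, and the rest of the proof is essentially an unpacking of the two cases above.
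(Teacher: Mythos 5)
Your proof is correct and follows essentially the same strategy as the paper: you split into a ``small $\ell$'' regime (where the trivial choice $\tilde E(v,j)=E(v,j;\ell)$ works because the $d$-regularity bound on the number of nearby vertices is absorbed by the generous constant in $\tilde L$) and a ``large $\ell$'' regime (where part $(B)$ of $\mathrm{Expan}(\alpha,\varepsilon,L)$ is applied iteratively in a greedy scheme, pruning at each step the currently-heavy edges). The only cosmetic difference is that the paper fixes a uniform threshold $\ell_0$ up front and splits on $\ell<\ell_0$ versus $\ell\geqslant\ell_0$, whereas you test the inequality $L(d-1-\varepsilon)^\ell\leqslant\tfrac{\alpha a_\ell}{12}(d-1)^{\ell-1}$ directly for each $\ell$; also, in the greedy step you strip out all of $T^{(s-1)}$, while the paper strips the slightly smaller subset $E_t\subseteq T_t$, and both are harmless for bounds (i) and (ii).
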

\begin{proof}
We will assume that $\sigma = +$; the case $\sigma = -$ is identical. Define $\ell_0$ to be
the least positive integer such that for any integer $\ell\geqslant\ell_0$,
\begin{equation} \label{eqkt012}
L(d-1-\varepsilon)^\ell < \frac{\alpha a_\ell}{12}(d-1)^{\ell-1}.
\end{equation}
Note that
\begin{equation}\label{eqkt013}
\ell_0\leqslant \frac{8(d-1-\varepsilon)}{\varepsilon}\, \ln\Big(\frac{20\, d\, L}{\alpha\, \varepsilon}\Big).
\end{equation}
We now fix an integer $\ell\geqslant 1$ and we consider cases.

First assume that $\ell<\ell_0$. Then, we set $\tilde E(v,j):= E(v,j;\ell)$ for every
$v\in V_{+}(j;\ell)$. With this choice, part (i) of the lemma is trivially satisfied.
Also observe that for every $e\in E_G$,
$$ \big\{v\in V_{+}(j;\ell)\colon e\in \tilde E(v,j)\big\}
\subseteq \big\{v\in [n]\colon \mathrm{dist}_G(v,e)\leqslant \ell-1\big\}, $$
and so part (ii) is also satisfied by the $d$-regularity of $G$, \eqref{eqkt013} and
the choice of $\tilde{L}$ in~\eqref{eq_defn_L_tilde}.

Next assume that $\ell\geqslant \ell_0$ and, therefore, \eqref{eqkt012} holds true. We shall
construct the desired sets $\tilde{E}(v,j)$, for each $v\in V_{+}(j;\ell)$, by an iterative greedy selection scheme.
Specifically, for every $t\in \big\{1,\ldots, |V_{+}(j;\ell)|\big\}$, we will select a vertex
$v_t\in V_{+}(j;\ell)\setminus \{v_1,\dots,v_{t-1}\}$ and
a subset $\tilde{E}(v_t,j)$ of $E(v_t,j;\ell)$ such that, setting
\[ \mathcal{G}_t:=
\Big(\bigcup_{s=1}^t \{v_s\}\times \tilde{E}(v_s,j)\Big)
\cup \Big(\bigcup_{v\in V_{+}(j;\ell)\setminus\{v_1,\ldots,v_t\}} \{v\}\times E(v,j;\ell)\Big),\]
the following properties are satisfied.
\begin{enumerate}
\item [(P1)] We have $|\tilde{E}(v_t,j)|\geqslant \frac{\alpha\,a_\ell}{12}\,(d-1)^{\ell-1}$.
\item [(P2)] For every $e\in \tilde{E}(v_t,j)$, we have
$\big|\{v\in V_{+}(j;\ell)\setminus\{v_1,\ldots,v_{t-1}\}\colon (v,e)\in
\mathcal{G}_t\}\big| \leqslant  L(d-1-\varepsilon)^\ell$.
\end{enumerate}
Let $t\in \big[|V_{+}(j;\ell)|\big]$ and assume that the construction has been carried out up to $t-1$.
Define
\[ E_t:=\big\{e\in E_G\colon \big|\{v\in V_{+}(j;\ell)\setminus\{v_1,\dots,v_{t-1}\}\colon
e\in E(v,j;\ell)\}\big|\geqslant L(d-1-\varepsilon)^\ell\big\}, \]
\[ T_t:= \Big\{e\in E_G\colon
\big|\{v\in V_{+}(j;\ell)\setminus\{v_1,\dots,v_{t-1}\}\colon
\dist_G(v,e)\leqslant \ell-1\}\big|\geqslant L(d-1-\varepsilon)^\ell\Big\}.\]
Clearly, $E_t$ is a subset of  $T_t$
and, in view of \eqref{aifhgbvfkhgwvfkhgf},
$$ \alpha(d-1)^{\ell-1}\, \big|V_{+}(j;\ell)\setminus\{v_1,\dots,v_{t-1}\}\big| \leqslant \frac{3n}{4}. $$
Hence, by part (\hyperref[Part-B]{$B$}) of the long-range expansion property applied for
``$S=V_{+}(j;\ell)\setminus\{v_1,\dots,v_{t-1}\}$", there exists a vertex
$v_t\in V_{+}(j;\ell)\setminus\{v_1,\dots,v_{t-1}\}$ such that
$$ \big|\{e\in E_G\colon \dist_G(e,v_t)\leqslant \ell-1\} \cap E_t\big|\leqslant L(d-1-\varepsilon)^\ell, $$
implying that
$$ |E(v_t,j;\ell)\setminus E_t| \geqslant \frac{\alpha\,a_\ell}{12}\,(d-1)^{\ell-1}. $$
We set
$$ \tilde E(v_t,j):=E(v_t,j;\ell)\setminus E_t. $$
Clearly, properties (P1) and (P2) of the selection are satisfied.

We claim that the sets $\tilde{E}(v,j)$, $v\in V_{+}(j;\ell)$, satisfy the requirements of the lemma.
Indeed, notice first that part (i) follows from property (P1) of the selection. To see that part (ii)
is also satisfied, fix $e\in E_G$. If $e\notin \bigcup_{v\in V_+(j,\ell)} \tilde E(v,j)$,
then the set $\{v\in V_{\sigma}(j;\ell):\;e\in \tilde E(v,j)\}$ is empty; hence, \eqref{eqkt011}
is trivially satisfied. So assume that $e\in \bigcup_{v\in V_+(j;\ell)} \tilde E(v,j)$, and set
$$\tau:= \min\big\{ t\in \big[|V_+(j;\ell)|\big]\colon e \in \tilde{E}(v_t,j)\big\}.$$
Notice that $\mathcal{G}_{|V_+(j;\ell)|}\subseteq \mathcal{G}_\tau$ and therefore, by the
definitions of $\mathcal{G}_{|V_+(j;\ell)|}$ and $\tau$,
\[\begin{split}
\big\{v\in V_+(j;\ell)\colon & e\in \tilde E(v,j)\big\}
= \big\{v\in V_+(j;\ell)\colon (v,e)\in \mathcal{G}_{|V_+(j;\ell)|}\big\}\\
& \subseteq \big\{v\in V_+(j;\ell)\colon (v,e)\in \mathcal{G}_\tau \big\}
= \big\{v\in V_+(j;\ell)\setminus \{v_1,\ldots,v_{\tau-1}\}\colon (v,e)\in \mathcal{G}_\tau\big\}.
\end{split}\]
Hence, by property (P2) of the selection,
\[ \big|\{v\in V_+(j;\ell)\colon e\in \tilde E(v,j)\}\big| \leqslant L(d-1-\varepsilon)^\ell.\]
Since $L\leqslant\tilde{L}$, the result follows.
\end{proof}

\subsection{Recombination}

\begin{definition} \label{def-jvl}
Given a vertex $v\in[n]$ and an integer $\ell\geqslant 1$, let $J(v,\ell)$ denote the collection of all indices $j\in [k]$
such that $v\in V_+(j;\ell)\cup V_-(j;\ell)$; moreover, for every $e\in E_G$, we set
$$ J(v,\ell,e):=\big\{j\in J(v,\ell)\colon e\in \tilde E(v,j)\big\}, $$
where the sets of edges $\tilde{E}(v,j)$ are taken from Lemma~\ref{afjhsbfiasdjfhbdsifjhb}.
\end{definition}
\begin{remark} \label{rem-new-3.8}
Observe that, by Lemma~\ref{afjhsbfiasdjfhbdsifjhb},
for every $j\in [k]$, every $\ell\geqslant 1$, and every $e\in E_G$,
$$ \big|\{v\in[n]\colon j\in J(v,\ell,e)\}\big|\leqslant 2\tilde L(d-1-\varepsilon)^\ell. $$
Also notice that, by the unconditionality of the norm $\|\cdot\|_X$, for every vertex $v\in [n]$ we have that
$\|\chi_{J(v,\ell)}\|_X=\big\|P_{J(v,\ell)}\big(f(v)\big)\big\|_X$.
\end{remark}

\begin{lemma} \label{aifgvsifghvfihfgiw}
Let $b\in\Z$, let $\ell\geqslant 1$, and let $v\in [n]$ such that
$\|\chi_{J(v,\ell)}\|_X=\big\|P_{J(v,\ell)}\big(f(v)\big)\big\|_X\geqslant 2^b$.
Then there exist at least $\mathrm{c}\,a_\ell (d-1)^\ell$ edges $e\in E_G$ such that
$\|\chi_{J(v,\ell,e)}\|_X\geqslant \mathrm{c}\,a_\ell\,2^b$, where
\begin{equation} \label{eq-c1}
\mathrm{c}:= \frac{\alpha^2}{48d(d-1)}.
\end{equation}
\end{lemma}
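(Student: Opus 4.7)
The plan is to prove this via (i) a dyadic reduction on $b$, (ii) a pointwise vector identity that exploits $1$-unconditionality, and (iii) a Markov-type pigeonhole. First, I would reduce to the sharp dyadic range. Setting $b^\star := \lfloor \log_2 \|\chi_{J(v,\ell)}\|_X \rfloor$, any conclusion established for $b^\star$ (which produces edges $e$ with $\|\chi_{J(v,\ell,e)}\|_X \geq \mathrm{c}\,a_\ell\,2^{b^\star} \geq \mathrm{c}\,a_\ell\,2^b$) immediately implies the conclusion for every smaller $b$. So I may assume
\[
2^b \leq \|\chi_{J(v,\ell)}\|_X < 2^{b+1}.
\]

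Next I would carry out the main estimate. Let $E_v := \{e \in E_G : \mathrm{dist}_G(v,e) \leq \ell-1\}$; by $d$-regularity one has $|E_v| \leq \frac{d^2(d-1)^{\ell-1}}{d-2}$. By Lemma \ref{afjhsbfiasdjfhbdsifjhb}, each $\tilde E(v,j) \subseteq E_v$ satisfies $|\tilde E(v,j)| \geq \frac{\alpha\,a_\ell}{12}(d-1)^{\ell-1}$ for every $j \in J(v,\ell)$. The key observation is then the pointwise vector identity
\[
\sum_{e \in E_v} \chi_{J(v,\ell,e)} \;=\; \sum_{j \in J(v,\ell)} |\tilde E(v,j)|\, e_j \;\geq\; \frac{\alpha\,a_\ell}{12}(d-1)^{\ell-1}\,\chi_{J(v,\ell)},
\]
where the equality holds coordinatewise by Definition \ref{def-jvl} (the $j$-th coordinate counts $|\{e \in E_v : e \in \tilde E(v,j)\}|$, which equals $|\tilde E(v,j)|$ when $j\in J(v,\ell)$ and $0$ otherwise), and the inequality uses the lower bound on $|\tilde E(v,j)|$. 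Since $\|\cdot\|_X$ is $1$-unconditional it is monotone on nonnegative vectors; combined with the triangle inequality this yields
\[
\sum_{e \in E_v} \|\chi_{J(v,\ell,e)}\|_X \;\geq\; \Big\|\sum_{e \in E_v} \chi_{J(v,\ell,e)}\Big\|_X \;\geq\; \frac{\alpha\,a_\ell}{12}(d-1)^{\ell-1}\,\|\chi_{J(v,\ell)}\|_X \;\geq\; \frac{\alpha\,a_\ell}{12}(d-1)^{\ell-1}\cdot 2^b.
\]

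Finally, I would extract the desired edges via pigeonhole. Define $A := \{e \in E_v : \|\chi_{J(v,\ell,e)}\|_X \geq \mathrm{c}\,a_\ell\,2^b\}$. Splitting the preceding sum over $A$ and its complement, using the dyadic bound $\|\chi_{J(v,\ell,e)}\|_X \leq \|\chi_{J(v,\ell)}\|_X < 2^{b+1}$ on $A$ (again by monotonicity of $\|\cdot\|_X$, since $J(v,\ell,e)\subseteq J(v,\ell)$) and the defining inequality off $A$, gives
\[
\frac{\alpha\,a_\ell}{12}(d-1)^{\ell-1}\cdot 2^b \;\leq\; 2^{b+1}|A| \;+\; \mathrm{c}\,a_\ell\,2^b\,|E_v|,
\]
so that $|A| \geq \frac{1}{2}\bigl(\frac{\alpha\,a_\ell}{12}(d-1)^{\ell-1} - \mathrm{c}\,a_\ell|E_v|\bigr)$. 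Inserting $|E_v| \leq \frac{d^2(d-1)^{\ell-1}}{d-2}$, $\mathrm{c} = \alpha^2/(48 d(d-1))$, and $\alpha \leq 1$ produces the desired bound $|A| \geq \mathrm{c}\,a_\ell(d-1)^\ell$ through a short arithmetic check.

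The only subtle point is the final numerical verification of the stated constant $\mathrm{c}$: the argument naturally produces a bound of order $\alpha\,a_\ell(d-1)^{\ell-1}/d$, and conversion to $\alpha^2\,a_\ell(d-1)^\ell/(d(d-1))$ exploits the slack afforded by $\alpha\leq 1$. I expect no other obstacles, as the $1$-unconditionality and Lemma \ref{afjhsbfiasdjfhbdsifjhb} carry all the structural work.
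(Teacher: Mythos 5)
Your proof is correct and mirrors the paper's argument: both lower bound $\sum_{e}\|\chi_{J(v,\ell,e)}\|_X$ by invoking unconditionality to compare with $\|\chi_{J(v,\ell)}\|_X$ (the paper phrases this as Jensen's inequality for a uniformly random edge in $\tilde E$, you do it deterministically via a coordinate-wise lower bound and the triangle inequality, but it is the same computation), and then extract the good edges by a reverse-Markov/pigeonhole step. Your unnormalized formulation is slightly more economical: it avoids the lower bound $|\tilde E|\geqslant\alpha(d-1)^{\ell-1}$ that the paper needs from part (A) of the long-range expansion property, using only an upper bound on $|E_v|$. Also note that your initial dyadic reduction on $b$ is unnecessary and actually costs a factor of $2$ in the pigeonhole — using the monotonicity bound $\|\chi_{J(v,\ell,e)}\|_X\leqslant\|\chi_{J(v,\ell)}\|_X$ directly (as the paper does) is cleaner — but the slack in the constant $\mathrm{c}$ absorbs this, so the arithmetic still closes.
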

\begin{proof}
We start by setting
$$ \tilde E:=\big\{e\in E_G\colon \dist_G(v,e)\leqslant \ell-1\big\}, $$
and we claim that
\begin{equation} \label{eqkt014}
\alpha (d-1)^{\ell-1} \leqslant  |\tilde E|\leqslant 2d(d-1)^{\ell-1}.
\end{equation}
The upper bound in \eqref{eqkt014} follows readily from the $d$-regularity of $G$. To see that the lower bound
in \eqref{eqkt014} is also satisfied, notice first that
\begin{equation} \label{eqkt015}
|\tilde E| \geqslant \big|\{w\in [n]\colon \dist_G(v,w) \leqslant \ell\}\big|-1 \geqslant
\big|\{w\in [n]\colon \dist_G(v,w) \leqslant \ell-1\}\big|.
\end{equation}
(The second inequality in \eqref{eqkt015} follows from the connectivity of $G$, which is a
consequence of part~(\hyperref[Part-A]{$A$}) of the long-range expansion property.)
On the other hand, since $\|\chi_{J(v,\ell)}\|_X>0$, the set $J(v,\ell)$ is nonempty. Pick any
$j\in J(v,\ell)$. By the definition of $J(v,\ell)$, we see that $\ell = \ell_{v,j}$ and consequently,
by \eqref{aifhgbvfkhgwvfkhgf}, we obtain that $\alpha(d-1)^{\ell-1}\leqslant \frac{3n}{4}$.
By part (\hyperref[Part-A]{$A$}) of the long-range expansion property applied for ``$S = \{v\}$", we get that
\[ \big|\{w\in [n]\colon \dist_G(v,w) \leqslant \ell-1\}\big|\geqslant \alpha(d-1)^{\ell-1}.\]
The desired lower bound in \eqref{eqkt014} follows from this estimate together with \eqref{eqkt015}.

Let $\xi$ be uniform random edge in $\tilde E$. By \eqref{eqkt010}, \eqref{eqkt014} and the fact that
$\tilde{E}(v,j) \subseteq \tilde{E}$, we have
$$ \Prob\big[j\in J(v,\ell,\xi)\big]=\frac{|\tilde E(v,j)|}{|\tilde E|}
\geqslant \frac{\alpha\,a_\ell}{24d} \ \ \ \text{ for every } j\in J(v,\ell). $$
Thus, by the convexity of the norm and Jensen's inequality,
$$ \Exp\,\|\chi_{J(v,\ell,\xi)}\|_X\geqslant \frac{\alpha\,a_\ell\,\|\chi_{J(v,\ell)}\|_X}{24d}. $$
On the other hand, everywhere on the probability space we have
$$ \|\chi_{J(v,\ell,\xi)}\|_X\leqslant \|\chi_{J(v,\ell)}\|_X. $$
Applying the reverse Markov inequality, we obtain
\begin{align*}
|\tilde E| \, \frac{\alpha\,a_\ell\, \|\chi_{J(v,\ell)}\|_X }{24d} \leqslant
\big|\big\{\xi \in \tilde E\colon \|\chi_{J(v,\ell,\xi)}\|_X \ge c\,a_\ell \,2^b \big\}\big| \cdot
\|\chi_{J(v,\ell)}\|_X + (c\,a_\ell\, 2^b)\,|\tilde E|.
\end{align*}
The result follows after using the lower bound~in~\eqref{eqkt014} on $|\tilde E|$, and recalling the assumption that
$\|\chi_{J(v,\ell)}\|_X \ge 2^b$.
\end{proof}

\begin{lemma}\label{afokjnlfkjnsadfkjhsfb}
Let $b\in\Z$, let $\ell\geqslant 1$ be an integer, and set
$$ M(b,\ell):= \big\{v\in[n]\colon \big\|P_{J(v,\ell)}\big(f(v)\big)\big\|_X\geqslant 2^b\big\}. $$
Also set
\begin{equation} \label{eq-c-hat}
\hat{\mathrm{c}} := \frac{\alpha^3}{2^{15}\, d^3\, \mathrm{C}\, \tilde{L}^{1/q}},
\end{equation}
where $\tilde{L}$ is as in \eqref{eq_defn_L_tilde}. Then, either
\begin{itemize}
\item[(i)] the number of edges $\{w,w'\}\in E_G$ with
$$\|f(w)-f(w')\|_X\geqslant \hat{\mathrm{c}}\,a_\ell\,2^b$$
is at least $\frac{1}{a_\ell^2}\,|M(b,\ell)|$, or
\item[(ii)] the number of edges $\{w,w'\}\in E_G$ with
$$ \|f(w)-f(w')\|_X\geqslant \hat{\mathrm{c}}\,a_\ell^3\,2^b\,\bigg(\frac{d-1}{d-1-\varepsilon}\bigg)^{\ell/q} $$
is at least $\hat{\mathrm{c}}\,a_\ell\,|M(b,\ell)|$.
\end{itemize}
\end{lemma}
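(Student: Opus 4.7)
The plan is to combine the per-vertex edge sets supplied by Lemma~\ref{aifgvsifghvfihfgiw} with a double-counting dichotomy, and to invoke Proposition~\ref{aljfhbsjfhbskajhsb} to boost the easy bound to the Case~(ii) target in the hard case. For each $v \in M(b,\ell)$, Lemma~\ref{aifgvsifghvfihfgiw} produces a set $\mathcal{E}(v) \subseteq E_G$ of at least $\mathrm{c}\, a_\ell (d-1)^\ell$ edges with $\|\chi_{J(v,\ell,e)}\|_X \geqslant \mathrm{c}\, a_\ell\, 2^b$. For any such edge $e = \{w,w'\}$, the inclusion $J(v,\ell,e) \subseteq \supp(f(w)-f(w'))$ (since $e \in \tilde E(v,j)$ forces $f(w)_j \neq f(w')_j$) combined with $1$-unconditionality and $|f(w)_j-f(w')_j|\geqslant 1$ on this support yields $\|f(w)-f(w')\|_X \geqslant \|\chi_{J(v,\ell,e)}\|_X \geqslant \mathrm{c}\, a_\ell\, 2^b$. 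A direct comparison of the explicit formulas shows $\hat{\mathrm{c}} \leqslant \mathrm{c}$, so every edge in $E^* := \bigcup_{v \in M(b,\ell)} \mathcal{E}(v)$ already meets the Case~(i) threshold $\hat{\mathrm{c}}\, a_\ell\, 2^b$.

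Set $m_e := |\{v \in M(b,\ell) : e \in \mathcal{E}(v)\}|$; double counting gives $\sum_e m_e \geqslant \mathrm{c}\, a_\ell (d-1)^\ell |M(b,\ell)|$. The dichotomy is: if $|E^*| \geqslant a_\ell^{-2}|M(b,\ell)|$, Case~(i) is immediate. Otherwise, with threshold $M_0 := \mathrm{c}\, a_\ell^3(d-1)^\ell/2$, the contribution to $\sum_e m_e$ from edges with $m_e < M_0$ is at most $|E^*| M_0 < \mathrm{c}\, a_\ell (d-1)^\ell |M(b,\ell)|/2$, so at least half the total mass comes from ``heavy'' edges with $m_e \geqslant M_0$. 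For such an edge $e=\{w,w'\}$, let $V_e := \{v \in M(b,\ell) : e \in \mathcal{E}(v)\}$, so $|V_e|=m_e\geqslant M_0$, and apply Proposition~\ref{aljfhbsjfhbskajhsb} with $x := \chi_{\supp(f(w)-f(w'))}$ rescaled by $\mathrm{c}\, a_\ell\, 2^b$ and sets $J_i := J(v_i,\ell,e)$ for $v_i \in V_e$: the norm condition $\|P_{J_i}(x)\|_X \geqslant 1$ holds by construction (using $J_i \subseteq \supp(f(w)-f(w'))$), and the disjointness parameter $\delta = 2\tilde L(d-1-\varepsilon)^\ell/|V_e|$ is supplied by Remark~\ref{rem-new-3.8}.

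The main technical obstacle is that Proposition~\ref{aljfhbsjfhbskajhsb} requires $\{P_J(x)\}_J$ itself to have restricted cotype $q$ with constant $\mathrm{C}$, which is not directly provided by the hypothesis of Theorem~\ref{afjnobgiugbo} for $x = \chi_{\supp(f(w)-f(w'))}$. The way around this is to inspect the proof of Proposition~\ref{aljfhbsjfhbskajhsb}: restricted cotype is there invoked only on projections of the form $\chi_{J'}$ with $J' \subseteq J_i \subseteq \supp(f(v_i))$, and each such vector equals $P_{J'}(|f(v_i)|)$ and hence lies in the restricted cotype family furnished by the hypothesis of Theorem~\ref{afjnobgiugbo}. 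With this observation the argument of Proposition~\ref{aljfhbsjfhbskajhsb} goes through unchanged and yields
\[
\|f(w)-f(w')\|_X \geqslant \|\chi_{\supp(f(w)-f(w'))}\|_X \geqslant \frac{\mathrm{c}\, a_\ell\, 2^b\, |V_e|^{1/q}}{\mathrm{C}\,(2\tilde L)^{1/q}(d-1-\varepsilon)^{\ell/q}\,2^{(2q+5)/q}}.
\]
Substituting $|V_e| \geqslant M_0 = \mathrm{c}\, a_\ell^3(d-1)^\ell/2$ and unwinding the explicit formulas for $\mathrm{c}$, $\hat{\mathrm{c}}$, and $\tilde L$, a short algebraic check confirms that the right-hand side meets the Case~(ii) threshold $\hat{\mathrm{c}}\, a_\ell^3\, 2^b(\tfrac{d-1}{d-1-\varepsilon})^{\ell/q}$; the polynomial dependencies in the definition of $\hat{\mathrm{c}}$ are calibrated precisely so that this step succeeds. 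Finally, since $v \in V_e$ forces $\dist_G(v,e) \leqslant \ell-1$, the $d$-regularity of $G$ gives $m_e = O(d(d-1)^{\ell-1})$, so the number of heavy edges is at least $\mathrm{c}\, a_\ell |M(b,\ell)|(d-1)/(4d)$, which comfortably exceeds $\hat{\mathrm{c}}\, a_\ell |M(b,\ell)|$ because $\mathrm{c}/\hat{\mathrm{c}}$ dominates any absolute constant by the definitions of $\mathrm{c}$ and $\hat{\mathrm{c}}$.
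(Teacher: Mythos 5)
Your proposal is correct and follows essentially the same route as the paper: build the per-vertex edge sets $E_v$ from Lemma~\ref{aifgvsifghvfihfgiw}, split on whether $|\bigcup_v E_v|$ is large, and in the small case use double-counting to extract heavy edges and then apply Proposition~\ref{aljfhbsjfhbskajhsb} with the disjointness bound from Remark~\ref{rem-new-3.8}. Your explicit observation that Proposition~\ref{aljfhbsjfhbskajhsb}'s restricted-cotype hypothesis is only ever invoked on projections $P_{J'}(x)$ with $J'\subseteq J_{i}\subseteq\supp(f(v_{i}))$, so that each such vector is of the form $P_{J'}(|f(v_{i})|)$ and hence covered by the hypothesis of Theorem~\ref{afjnobgiugbo}, is a legitimate and welcome clarification of a step the paper passes over silently.
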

\begin{proof}
For every $v\in M(b,\ell)$, let $E_v$ denote the set of all edges $e\in E_G$ with
$\|\chi_{J(v,\ell,e)}\|_X\geqslant \mathrm{c}\,a_\ell\,2^b$, where the constant $\mathrm{c}$ is as in \eqref{eq-c1}.
By Lemma~\ref{aifgvsifghvfihfgiw},
\begin{equation} \label{akhgfvjfhgvfjhgv}
|E_v|\geqslant \mathrm{c}\,a_\ell\,(d-1)^\ell \ \ \ \text{ for every } v\in M(b,\ell).
\end{equation}
Further, set
$$ E':=\bigcup\limits_{v\in M(b,\ell)}E_v. $$
For every $\{w,w'\}\in E_v$ and every $j\in J(v,\ell,\{w,w'\})$, we have that
$\{w,w'\}\in \tilde E(v;j)\subseteq E(v,j;\ell_{v,j})$, which implies that $f(w)_j-f(w')_j\neq 0$. Hence,
$$ \supp\big(f(w)-f(w')\big)\supseteq J(v,\ell,\{w,w'\}), $$
which in turn implies, by the unconditionality of $\|\cdot\|_X$, that
$$ \|f(w)-f(w')\|_X\geqslant \mathrm{c}\,a_\ell\,2^b. $$
Therefore, if $|E'|\geqslant \frac{1}{a_\ell^2}\,|M(b,\ell)|$, then, after observing that $\mathrm{c}\geqslant \hat{\mathrm{c}}$,
we see that part (i) of the lemma is satisfied.

So, assume that $|E'|<\frac{1}{a_\ell^2}\,|M(b,\ell)|$. Note that, by $d$-regularity of $G$,
for every $e\in E_G$ there are at most $\frac{2}{d-2}\,(d-1)^\ell$ distinct vertices $v\in[n]$ with
$\dist_G(e,v)\leqslant \ell-1$. Consequently, by the choice of~$E_v$, the definitions of $J(v,\ell,e)$ and
$E(v,j;\ell)$ in Definitions \ref{def-jvl} and \ref{akhgfvhgfvdshgfvdjhfgv} respectively,
and Lemma~\ref{afjhsbfiasdjfhbdsifjhb}, we obtain that
$$ \big|\{v\in M(b,\ell)\colon e\in E_v\}\big|\leqslant \frac{2}{d-2}\,(d-1)^\ell \ \ \ \text{ for every } e\in E_G. $$
Combining this with \eqref{akhgfvjfhgvfjhgv}, we obtain
\begin{align*}
\mathrm{c}\,a_\ell\,(d-1)^\ell\,|M(b,\ell)| &\leqslant  \frac{2}{d-2}(d-1)^\ell \,\left|\left\{e \in E'\colon
\big|\{v\in M(b,\ell)\colon e\in E_v\}\big|\geqslant \frac{\mathrm{c}}{4}\,a_\ell^3\,(d-1)^\ell\right\}\right|  \\
& \ \ \ \ \ \ \  + |E'|\, \frac{c}{4}a_\ell^3\,(d-1)^\ell.
\end{align*}
Rearranging and using the assumption on the cardinality of $E'$, we have that
$$ \big|\{v\in M(b,\ell)\colon e\in E_v\}\big|\geqslant \frac{\mathrm{c}}{4}\,a_\ell^3\,(d-1)^\ell $$
for at least $\frac{\mathrm{c}}{4}\,a_\ell\,|M(b,\ell)|$ edges $e\in E'$.
Fix any such edge $e=\{w,w'\}$. Observe that
$$ \supp\big(f(w)-f(w')\big)\supseteq J(v,\ell,\{w,w'\}) \ \ \ \text{ whenever } \{w,w'\}\in E_v; $$
moreover, by Remark \ref{rem-new-3.8}, for every $j\in [k]$, we have
$$ \big|\big\{ v\in M(b,\ell)\colon j\in J(v,\ell,\{w,w'\}\big\}\big| \leqslant 2\tilde L(d-1-\varepsilon)^\ell. $$
By Proposition~\ref{aljfhbsjfhbskajhsb}, we obtain that
$$ \|f(w)-f(w')\|_X \geqslant \hat{\mathrm{c}}\,a_\ell^3\,2^b\,\bigg(\frac{d-1}{d-1-\varepsilon} \bigg)^{\ell/q}. $$
Since $\frac{\mathrm{c}}{4}\geqslant \hat{\mathrm{c}}$, this shows that part (ii) of the lemma is satisfied.
\end{proof}

\begin{proof}[Proof of Theorem~\ref{afjnobgiugbo}]
Fix any integer $\ell\geqslant 1$. Define two disjoint, possibly empty, subsets $B_1(\ell)$ and $B_2(\ell)$ of $\mathbb{Z}$
as follows. If $b\in \mathbb{Z}$ is such that $M(b,\ell)$ is nonempty,
then include $b$ in $B_1(\ell)$ if part (i) of Lemma \ref{afokjnlfkjnsadfkjhsfb} is satisfied; otherwise,
include $b$ in $B_2(\ell)$. In particular, if $b\in B_2(\ell)$, then
$M(b,\ell)\neq\emptyset$ and part (ii)
of Lemma \ref{afokjnlfkjnsadfkjhsfb} is satisfied.

Next observe that for every $\gamma>0$,
\begin{equation}\label{eqkt017}
2 \sum_{\{w,w'\}\in E_G}\|f(w)-f(w')\|_X \geqslant
\sum_{b\in\mathbb{Z}}\gamma\, 2^b\, \big|\big\{ \{w,w'\}\in E_G\colon \|f(w)-f(w')\|_X\geqslant \gamma 2^b\big\}\big|.
\end{equation}
By the definition of $B_1(\ell)$, part (i) of Lemma \ref{afokjnlfkjnsadfkjhsfb} and \eqref{eqkt017} applied for
``$\gamma= \hat{\mathrm{c}}\,a_\ell$'', we get that
\begin{align*}
2\sum_{\{w,w'\}\in E_G}\|f(w)-f(w')\|_X &\geqslant \sum\limits_{b\in B_1(\ell)} \hat{\mathrm{c}}\,a_\ell\,2^b\,
\big|\big\{\{w,w'\}\in E_G\colon \|f(w)-f(w')\|_X\geqslant \hat{\mathrm{c}}\,a_\ell\,2^b\big\}\big|\\
&\geqslant \sum\limits_{b\in B_1(\ell)}\hat{\mathrm{c}}\,a_\ell\,2^b\cdot
\frac{1}{a_\ell^2}|M(b,\ell)|\\
&= \frac{\hat{\mathrm{c}}}{a_\ell}\sum\limits_{b\in B_1(\ell)}2^b\,
\big|\big\{v\in[n]\colon \big\|P_{J(v,\ell)}\big(f(v)\big)\big\|_X\geqslant 2^b\big\} \big|.
\end{align*}
Similarly, by the definition of $B_2(\ell)$, part (ii) of Lemma \ref{afokjnlfkjnsadfkjhsfb} and inequality \eqref{eqkt017}
applied this time for ``$\gamma= \hat{\mathrm{c}}\,a_\ell^3 \big(\frac{d-1}{d-1-\varepsilon}\big)^{\ell/q}$'',
\begin{align*}
2&\sum_{\{w,w'\}\in E_G}\|f(w)-f(w')\|_X\\
&\geqslant \hat{\mathrm{c}}\,a_\ell^3\,
\sum\limits_{b\in B_2(\ell)}
2^b\,\bigg(\frac{d-1}{d-1-\varepsilon}\bigg)^{\ell/q}\
\bigg|\Big\{\{w,w'\}\in E_G\colon \|f(w)-f(w')\|_X\geqslant
\hat{\mathrm{c}}\,a_\ell^3\,2^b\,\bigg(\frac{d-1}{d-1-\varepsilon}\bigg)^{\ell/q}
\Big\}\bigg|
\\
&\geqslant\hat{\mathrm{c}}\,a_\ell^3\,
\sum\limits_{b\in B_2(\ell)}
2^b\,\bigg(\frac{d-1}{d-1-\varepsilon}\bigg)^{\ell/q}\,
\hat{\mathrm{c}}\,a_\ell\,|M(b,\ell)|\\
&=
\hat{\mathrm{c}}^2\,a_\ell^4\,\bigg(\frac{d-1}{d-1-\varepsilon}\bigg)^{\ell/q}
\sum\limits_{b\in B_2(\ell)}
2^b\,\big|\big\{v\in[n]\colon \big\|P_{J(v,\ell)}\big(f(v)\big)\big\|_X\geqslant 2^b\big\}
\big|.
\end{align*}
Set $\mathrm{c}':= \hat{\mathrm{c}}^2\, \big(\frac{\varepsilon}{10qd}\big)^{10}$ and observe that
$\mathrm{c}'\leqslant \hat{\mathrm{c}}$ and $\frac{4}{\mathrm{c}'}=\Pi$, where $\Pi$ is as in \eqref{eq-new2}.
Moreover, by the definition of~$a_\ell$, we have
$$ \hat{\mathrm{c}}^2\,a_\ell^4\,\bigg(\frac{d-1}{d-1-\varepsilon}\bigg)^{\ell/q}
\geqslant \frac{\mathrm{c}'}{a_\ell}. $$
Combining the above estimates, we obtain that
\begin{align*}
4\sum_{\{w,w'\}\in E_G}\|f(w)-f(w')\|_X
&\geqslant \frac{\mathrm{c}'}{a_\ell}
\sum\limits_{b\in \Z} 2^b\,\big|\big\{v\in[n]\colon \big\|P_{J(v,\ell)}\big(f(v)\big)\big\|_X \geqslant 2^b\big\} \big|\\
&\geqslant \frac{\mathrm{c}'}{a_\ell} \sum\limits_{v\in[n]} \big\|P_{J(v,\ell)}\big(f(v)\big)\big\|_X.
\end{align*}
Finally, observe that $\supp\big(f(v)\big)=\bigcup_{\ell\geqslant 1} J(v,\ell)$ for every $v\in [n]$.
Thus, summing over all $\ell\geqslant 1$ and using the triangle inequality and the fact that $\sum_{\ell=1}^\infty a_\ell=1$,
we conclude that
\[ \sum\limits_{v\in[n]} \|f(v)\|_X\leqslant
\sum\limits_{\ell\geqslant 1} \sum\limits_{v\in[n]} \big\|P_{J(v,\ell)}\big(f(v)\big)\big\|_X
\leqslant \frac{4}{\mathrm{c}'}\sum_{\{w,w'\}\in E_G}\|f(w)-f(w')\|_X. \qedhere\]
\end{proof}

\section{Long-range expansion is typical: proof of Proposition \ref*{lemma:long-range-expansion}} \label{sec5}

Our goal in this section is to present the proof of Proposition \ref{lemma:long-range-expansion}.

Part (a) is proven by a direct computation in the configuration model. Given a (not too large) set
$S$ of vertices, we analyze the exploration process associated with running a breadth-first search from~$S$.
The key quantity is $\Delta(S,\ell)$, the number of previously unexplored vertices revealed at step~$\ell$.
Using an inductive argument, in Proposition \ref{p3-pap1} we obtain strong lower-bounds on the growth of $\Delta(S,\ell)$,
until a (small) constant fraction of all vertices have been discovered. At this point, it is sufficient to
crudely bound the remainder of the exploration process using Cheeger's inequality.
A main technical difficulty in this inductive strategy is actually the base case: if $d$ is small---in particular, $d= 3$ is the core issue---then $|\Delta(S,\ell)|$ is poorly concentrated for small values of $\ell$. This is resolved by invoking Cheeger's inequality in Fact \ref{fact_new_Cheeger_expansion} and Corollary \ref{cor_new_base_step} to obtain weak lower-bounds on $|\Delta(S,\ell)|$ for small $\ell$. Once $\ell$ is sufficiently large, the exploration process has encountered sufficiently many total vertices for $|\Delta(S,\ell)|$ to be strongly concentrated.

For proving part (b) of Proposition \ref{lemma:long-range-expansion}, we utilize a spectral argument. Notice that part (\hyperref[Part-B]{$B$}) of the long-range
expansion property roughly states that for a large set $S$, it is not possible for a disproportionate fraction
of the walks of length $\ell$ started from $S$ to all end up in a small set $T$. Let $G$ be a graph as in
part (b), and let $A_G$ be its adjacency matrix; then the spectral gap of $A_G^{\ell}$ can be lower bounded
in terms of the spectral gap of $A_G$. And, assuming the negation of part (\hyperref[Part-B]{$B$}),
we explicitly construct a vector which certifies a small spectral gap for $A_G^{\ell}$, yielding a contradiction.

Before we proceed to the details, we first present some basic facts concerning random regular graphs.

\subsection*{The configuration model}

This model, introduced by Bollob\'{a}s \cite{Bol80}, is also referred to as the \emph{pairing model}.
We briefly recall its definition and a couple of its basic properties that are needed for the proof of
part (a) of Proposition \ref{lemma:long-range-expansion}; for further information we refer to
\cite{Bol01,JLR00,Wo99}.

Fix two integers $n\geqslant d\geqslant 3$ such that $nd$ is even, and consider the set $[n]\times [d]$;
we view this set as starting with $n$ vertices and drawing $d$ lines below each of them.
Given a uniformly random perfect matching $\mu$ on $[n]\times [d]$, let $\mathbf{G}=\mathbf{G}(\mu)$
be the random multigraph obtained by collapsing the $d$ points below a vertex $v\in [n]$ to the vertex $v$.

Let $\mathcal{S}$ denote the event that the multigraph $\mathbf{G}$ is simple, that is, it has no multiple
edges and no self-loops. It is easy to see that the distribution of $\mathbf{G}$ conditioned on $\mathcal{S}$ coincides
with that of a random $d$-regular graph uniformly sampled from $G(n,d)$. More importantly, we have
\begin{equation} \label{e-ra-s-1}
\mathbb{P}[\mathcal{S}] = \big(1+o_d(1)\big)\, e^{-(d^2-1)/4};
\end{equation}
in particular, a graph property is satisfied with high probability in $G(n,d)$ if it is satisfied
with high probability in the configuration model.

\subsection*{Friedman's second eigenvalue theorem}

Let $n \geqslant d\geqslant 3$ be integers, and let $G$ be a $d$-regular graph on $[n]$. Recall that by
$\lambda_n(G)\leqslant \cdots \leqslant \lambda_2(G) \leqslant \lambda_1(G)$ we denote
the eigenvalues of the adjacency matrix of $G$, and we set
$\lambda(G)=\max\big\{ |\lambda_2(G)|,\dots,|\lambda_n(G)\big\}$.
We will need the following result of Friedman \cite{Fr08}, formerly known as Alon's conjecture.
\begin{theorem} \label{Friedman}
Let $n\geqslant d\geqslant 3$ be integers such that $nd$ is even, and let $\mathbb{P}$ be the uniform probability measure on $G(n,d)$.
There exists a constant $\tau=\tau(d)>0$ such that for every $\epsilon>0$,
\[ \mathbb{P}\big[ \lambda(G) \leqslant 2\sqrt{d-1} + \epsilon \big] \geqslant
1- O_{d,\epsilon}\Big( \frac{1}{n^{\tau}}\Big). \]
\end{theorem}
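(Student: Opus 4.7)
The plan is to prove this via the moment (trace) method in the configuration model. First I would reduce to the configuration model $\mathbf{G}$, since by \eqref{e-ra-s-1} the probability of simplicity is bounded below by a constant $c(d)>0$, so any polynomial-in-$n$ upper tail bound for $\lambda(\mathbf{G})$ transfers with at most a factor $1/c(d)$ loss to $G\sim G(n,d)$. Then, writing $A = A_{\mathbf{G}}$, the strategy is to bound $\lambda(\mathbf{G})$ by bounding $\mathbb{E}\,\mathrm{tr}(A^{2k})$ for a well-chosen $k=k(n,d,\epsilon)$: since $\mathrm{tr}(A^{2k}) = d^{2k} + \sum_{i\ge 2}\lambda_i(G)^{2k} \geqslant d^{2k} + \lambda(G)^{2k}$, Markov's inequality yields
\[
\mathbb{P}\bigl[\lambda(G) > 2\sqrt{d-1}+\epsilon\bigr]\;\leqslant\;\frac{\mathbb{E}\,\mathrm{tr}(A^{2k}) - d^{2k}}{(2\sqrt{d-1}+\epsilon)^{2k}}.
\]
Taking $k \asymp C(d,\epsilon)\log n$ converts an estimate of the form $\mathbb{E}\,\mathrm{tr}(A^{2k}) - d^{2k} \leqslant n\cdot (2\sqrt{d-1})^{2k}\cdot\mathrm{poly}(k)$ into the desired $O_{d,\epsilon}(n^{-\tau})$ tail bound, where $\tau(d,\epsilon)=\tau(d)$ can be taken to depend only on $d$ by shrinking $\epsilon$ if needed.

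The combinatorial heart of the argument is to expand $\mathbb{E}\,\mathrm{tr}(A^{2k})$ as a sum over closed walks of length $2k$ in $\mathbf{G}$, grouped by the underlying \emph{shape}, i.e.\ the isomorphism type of the walk together with the sequence recording which step uses which edge of the support. For each shape one computes the probability in the configuration model that the required edges all appear, which behaves like $n^{-|E|}$ times combinatorial factors, and multiplies by the number of ways to embed the shape. The dominant contribution comes from shapes whose support is a tree; these correspond bijectively (at the level of count) to closed non-backtracking walks on the infinite $d$-regular tree, and standard computations (Kesten--McKay) give a total contribution of the form $n\,(2\sqrt{d-1})^{2k}(1+o(1))$. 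The essential content of the theorem is thus to show that shapes with positive excess contribute only lower order.

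The main obstacle, and what makes the sharp $2\sqrt{d-1}$ bound genuinely difficult, is the control of the \emph{tangles}: shapes whose support contains a cycle, especially a short one, since a walk can wind around such a cycle many times and thereby accumulate spectrum beyond the tree-growth rate. The resolution, following Friedman's scheme, is to decompose each walk uniquely into a tangle (a strongly reduced sub-walk supported on the cyclic part of the skeleton) together with non-backtracking excursions into the tree-like portion. Each excursion of total length $L$ contributes at most $(d-1)^{L/2}$ in spectral weight, while each additional cycle in the support gains a factor $(2\sqrt{d-1})^{O(1)}$ in count but loses a factor $1/n$ in edge probability. Combining these and summing over all tangle types, one proves an estimate of the form
\[
\mathbb{E}\,\mathrm{tr}(A^{2k}) - d^{2k}\;\leqslant\;n\,(2\sqrt{d-1})^{2k}\Bigl(1 + O_d\bigl(k^{A(d)}/n^{\eta(d)}\bigr)\Bigr),
\]
valid uniformly for $k\leqslant c(d)\log n$ for some absolute $c(d)>0$.

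Plugging this into the Markov bound and choosing $k = \lceil C(d,\epsilon)\log n\rceil$ with $C(d,\epsilon)$ large enough that $(1+\epsilon/(2\sqrt{d-1}))^{2k}\geqslant n^{1+\tau}$ for some $\tau=\tau(d)>0$ then completes the proof. An alternative, and considerably shorter, route that I would consider as a backup is Bordenave's spectral analysis of the non-backtracking matrix, which yields the same conclusion by applying the Ihara--Bass formula to translate the question into bounding the spectral radius of a much sparser linearized operator; the tangle analysis is replaced there by a more explicit perturbation argument. Either way, the decisive technical point is the quantitative suppression of cycle-containing walks in the configuration model, and this is where essentially all of the work of the proof lies.
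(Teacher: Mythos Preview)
The paper does not prove this theorem. It is quoted as a black box from Friedman's memoir \cite{Fr08} (introduced with ``We will need the following result of Friedman \cite{Fr08}, formerly known as Alon's conjecture''), and is invoked only as an ingredient in the proof of Proposition~\ref{lemma:long-range-expansion}. So there is nothing in the paper to compare your proposal against.

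As for the proposal itself: your high-level outline is broadly the strategy of Friedman's proof, but it glosses over the point that makes the result hard. The naive trace bound you write, $\mathbb{E}\,\mathrm{tr}(A^{2k}) - d^{2k} \leqslant n\,(2\sqrt{d-1})^{2k}\,\mathrm{poly}(k)$, is \emph{not} true in the configuration model for $k\asymp\log n$: tangles with a single short cycle contribute a term of order $n^{0}\cdot (d-1)^{2k}$ (a walk can wind around a fixed short cycle for the whole time), which for $k\asymp\log n$ swamps the main term. This is exactly why Broder--Shamir and its refinements only reach $2\sqrt{d-1}+c$ for some fixed $c>0$. Friedman's actual argument does not bound the ordinary trace; it introduces a modified (``selective'' or ``certified'') trace that excises walks passing through rare local defects (supercritical tangles), shows that with probability $1-O(n^{-\tau})$ the graph has no such defects, and only then obtains the estimate you wrote for the modified quantity. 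Your decomposition into tangle plus tree-excursions is part of the picture, but the claim that ``each additional cycle \dots\ loses a factor $1/n$'' is what fails: a single cycle costs one factor of $1/n$ yet allows arbitrarily many windings. Bordenave's non-backtracking approach, which you mention as a backup, faces and resolves the same obstruction via a different mechanism (explicitly excluding vertices in small tangles from the analysis). In short, your sketch identifies the right objects but underestimates the central difficulty; turning it into a proof is the content of a hundred-page memoir.
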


\subsection*{The Cheeger constant of regular graphs}

Let $n\geqslant d\geqslant 3$ be integers, and let $G$ be a $d$-regular graph on $[n]$; the \emph{Cheeger constant} of $G$---also known as
\emph{expansion ratio}---is defined by
\[ h(G):= \min_{\substack{\emptyset \neq S\subseteq [n]\\ |S|\leqslant \frac{n}{2}}}
\frac{\big|\big\{ \{v,w\}\in E_G:\; v\in S \text{ and } w\in S^{\complement}\big\}\big|}{|S|}. \]
It is well-known that the Cheeger constant of $G$ is related to the spectral properties of the
adjacency matrix of $G$. More precisely, we have
\begin{equation} \label{e-cheeger-e1}
\frac{d-\lambda_2(G)}{2} \leqslant h(G) \leqslant \sqrt{2d\big(d-\lambda_2(G)\big)};
\end{equation}
see, e.g., \cite[Theorem 4.1]{HLW06}.

\subsection{Proof of part (a)}

For the rest of this proof we fix a pair $n\geqslant d\geqslant 3$ of integers such that $nd$ is even.
It will also be convenient to
introduce the following notation. For every multigraph $G$ on $[n]$, every $S\subseteq [n]$
and every integer $\ell$, we set
\begin{enumerate}
\item[$\bullet$] $B_G(S,\ell) := \big\{ v\in[n]\colon \mathrm{dist}_G (v,S)\leqslant\ell\big\}$,
\item[$\bullet$] $\partial B_G(S,\ell):=  B_G(S,\ell) \setminus B_G(S,\ell-1)$,
\item[$\bullet$] $\Delta_G(S,\ell) := \big\{v\in\partial B_G(S,\ell)\colon v \text{ is edge-uniquely connected with some } w\in B_G(S,\ell-1)\big\}$,
\end{enumerate}
with the convention that $B_G(S,\ell)=\emptyset$ if $S$ is empty or if $\ell$ is negative.

\begin{definition}[Perfect matchings, partial matchings, and related objects]
Let
\begin{align}
\mathcal{M} & := \big\{\mu\colon \text{$\mu$ is a perfect matching of\, } [n]\times[d]\big\}, \nonumber \\
\mathcal{M}_\mathrm{part} & := \big\{ \pi\colon \text{$\pi$ is a (possibly partial) matching of\, }[n]\times[d]\big\}; \nonumber
\end{align}
notice that $\mathcal{M}\subseteq \mathcal{M}_\mathrm{part}$. For every $\pi\in\mathcal{M}_\mathrm{part}$, we set
\begin{align*}
\cup\pi &:= \big\{(v,k)\in [n]\times [d]\colon (v,k)\in e \text{ for some } e\in\pi\big\}, &
\mathcal{M}_{\pi} & := \big\{ \mu\in\mathcal{M}\colon \pi \subseteq \mu\big\}, \\
V(\pi) & := \big \{v\in [n]\colon \big(\{v\}\times [d]\big) \cap (\cup\pi)\neq\emptyset\big\}, &
\mathrm{Free}(\pi) & := \big(V(\pi) \times [d]\big) \setminus \cup\pi;
\end{align*}
moreover, by $G(\pi)$ we denote the multigraph on $[n]$ where we add an edge connecting $v,w\in [n]$
whenever there exist $k,l\in [d]$ such that $\big\{(v,k),(w,l)\big\} \in \pi$.
Observe that if $\pi\in\mathcal{M}$---that is, if $\pi$ is a perfect matching---then $G(\pi)$ is the multigraph
$\mathbf{G}(\pi)$ associated with $\pi$ in the configuration model.
\end{definition}
\begin{lemma} \label{l1-pap}
Let $\pi\in\mathcal{M}_\mathrm{part}$, let $0<\theta<1$, let\, $R\subseteq [n]$ with $V(\pi)\subseteq R$
and $|R|<\frac{n}{2}$, and set $\mathcal{A} := \big(R\times [d]\big)\setminus \cup\pi$. Then,
\begin{equation} \label{e-pap-1}
\mathbb{P}\Big[ \big|\Delta_{\mathbf{G}}(R,1)\big|\geqslant
\theta|\mathcal{A}| \, \Big| \, \mathcal{M}_{\pi} \Big]
\geqslant 1- \Big(\frac{2 e}{1-\theta} \cdot \frac{|\mathcal{A}|}{n-2|R|}
\Big)^{\frac{1-\theta}{2}|\mathcal{A}|}.
\end{equation}
\end{lemma}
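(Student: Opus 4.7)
The plan is to expose the completion of $\pi$ one half-edge of $\mathcal{A}$ at a time, classify the revealed matches as good or bad, and bound the number of bad matches by a product-of-rates argument.

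Conditional on $\mathcal{M}_\pi$, the restriction of $\mu$ to the $N:=nd-2|\pi|$ free half-edges of $[n]\times[d]\setminus\cup\pi$ is a uniform perfect matching. I would order $\mathcal{A}$ as $a_1,\dots,a_{|\mathcal{A}|}$ and process them in turn: at step $i$, if $a_i$ is already matched, skip it; otherwise expose its match, which conditionally on the history is uniform over the remaining unmatched half-edges. Call this a \emph{revelation}, and call it \emph{bad} if the match is either another half-edge in $\mathcal{A}$ (an internal bad revelation) or a half-edge of a vertex $v\in[n]\setminus R$ that has already been hit by an earlier revelation. Observe that $|\Delta_{\mathbf{G}}(R,1)|$ equals $|\mathcal{A}|$ minus the number of \emph{bad half-edges}, i.e., those matched internally or to a multi-hit external vertex. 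A direct accounting on vertex multiplicities shows that each internal match contributes $2$ bad half-edges and exactly $1$ bad revelation, while each $v\in[n]\setminus R$ hit $k_v\geqslant 2$ times contributes $k_v$ bad half-edges and $k_v-1$ bad revelations. Hence the number of bad half-edges is at most twice the number of bad revelations, and so the event $|\Delta_{\mathbf{G}}(R,1)|<\theta|\mathcal{A}|$ forces at least $t:=\lceil(1-\theta)|\mathcal{A}|/2\rceil$ bad revelations.

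To bound this last event, note that at the $(i+1)$st revelation the match is uniform among $N-2i-1$ remaining half-edges, of which at most $|\mathcal{A}|$ lie in $\mathcal{A}$ and at most $d\cdot i$ lie at the at most $i$ already-hit vertices of $[n]\setminus R$. Since $N-2|\mathcal{A}|=(n-2|R|)d+2|\pi|\geqslant(n-2|R|)d$, the conditional bad probability is a.s.\ bounded by $p_{i+1}:=(|\mathcal{A}|+di)/((n-2|R|)d)$, and for $d\geqslant 3$ a straightforward calculation gives
\[
\sum_{i=0}^{|\mathcal{A}|-1}p_{i+1}\;\leqslant\;\frac{|\mathcal{A}|^2}{n-2|R|}.
\]
The standard product-of-rates estimate
\[
\mathbb{P}\bigl[\#\text{bad revelations}\geqslant t\,\bigm|\,\mathcal{M}_\pi\bigr]\;\leqslant\;\frac{1}{t!}\Bigl(\sum_i p_{i+1}\Bigr)^t,
\]
combined with the Stirling-type bound $t!\geqslant(t/e)^t$ and the choice of $t$, then produces exactly the form $\bigl(\tfrac{2e}{1-\theta}\cdot\tfrac{|\mathcal{A}|}{n-2|R|}\bigr)^{(1-\theta)|\mathcal{A}|/2}$ appearing in \eqref{e-pap-1}.

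The main subtle step is the structural comparison ``bad half-edges $\leqslant 2\cdot$ bad revelations'', which is a short case analysis on the multiplicity profile $(k_v)_{v\in[n]\setminus R}$ but is the key nontrivial reduction. The remaining ingredients---the a.s.\ deterministic upper bound $p_{i+1}$ on each conditional revelation probability, and the product-of-rates inequality applied in the sequential filtration generated by the exposure---are essentially bookkeeping.
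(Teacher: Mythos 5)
Your proof is correct and follows essentially the same approach as the paper's: both reveal the matching sequentially over $\mathcal{A}$, establish that the number of ``bad'' half-edges (those matched internally or to a multiply-hit external vertex) is at most twice the number of ``bad'' revelation events, and then bound the count of bad revelations by a product-of-conditional-rates argument. Your vertex-multiplicity accounting for the factor-of-two comparison is a cleaner phrasing of the paper's injection argument, and your use of varying rates $p_i$ summed up versus the paper's uniform conditional bound $m/(n-2|R|)$ with a $\binom{m}{t}$ union bound lead to the same final estimate.
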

\begin{proof}
Set $m:=|\mathcal{A}|$. Introduce the random variables $\mathbf{X}, \mathbf{X}_{\mathrm{in}}$
and $\mathbf{X}_{\mathrm{out}}$ on $\mathcal{M}_{\pi}$ by the rule
\[ \mathbf{X}(\mu) := \big\{ e\in\mu\setminus\pi\colon e\cap \mathcal{A}\neq\emptyset \big\}, \ \ \
\mathbf{X}_{\mathrm{in}}(\mu):=  \big\{ e\in\mu\setminus \pi\colon e \subseteq \mathcal{A}\big\},
\ \ \ \mathbf{X}_{\mathrm{out}}(\mu) := \mathbf{X}(\mu) \setminus \mathbf{X}_{\mathrm{in}}(\mu). \]
Notice that everywhere on $\mathcal{M}_\pi$ we have
\[ \frac{m}{2}\leqslant |\mathbf{X}| \leqslant m; \]
moreover, $|\mathbf{X}_{\mathrm{in}}|+|\mathbf{X}_{\mathrm{out}}|=|\mathbf{X}|$
and $2 |\mathbf{X}_{\mathrm{in}}|+ |\mathbf{X}_{\mathrm{out}}|= m$, which implies that
\begin{equation} \label{eqkt19}
|\mathbf{X}_{\mathrm{in}}|= m- |\mathbf{X}| \ \ \ \text{ and } \ \ \
|\mathbf{X}_{\mathrm{out}}| = 2|\mathbf{X}| - m.
\end{equation}
Next observe that the probability on the left-hand-side of \eqref{e-pap-1} depends only on the $\mathbf{X}(\mu)$ part
of~$\mu\in\mathcal{M}_\pi$. We will reveal this part sequentially. Specifically, fix an arbitrary linear order $\prec$
on~$\mathcal{A}$. Then, given $\mu\in\mathcal{M}_{\pi}$, we enumerate the set $\mathbf{X}(\mu)$ by
$\mathbf{e}_1(\mu),\dots,\mathbf{e}_{|\mathbf{X}(\mu)|}(\mu)$ so that for every $i,j\in \big\{1,\dots, |\mathbf{X}(\mu)|\big\}$
with $i<j$ we have that\footnote{Here, the minimum is taken with respect to $\prec$.}
$\min\big(\mathbf{e}_i(\mu)\big) \prec \min\big(\mathbf{e}_j(\mu)\big)$.

For every $i\in [m]$, define $\mathcal{E}_i$ as the event that the $i$-th edge of $\mathbf{X}$ connects to a vertex outside $R$
that no other edge from $\mathbf{X}$ is connected to; namely,
\begin{align*}
\mathcal{E}_i := \Big\{ \mu\in \mathcal{M}_{\pi} \colon & i\leqslant |\mathbf{X}(\mu)| \text{ and } \big(\{v\}\times[d]\big) \cap \mathbf{e}_j(\mu) = \emptyset
\text{ for all } j\in \big\{1,\dots,|\mathbf{X}(\mu)|\big\} \text{ with } j\neq i, \\
& \text{where } \big\{(v,k)\big\} := \mathbf{e}_i(\mu)\setminus \big(R\times [d]\big) \Big\}.
\end{align*}
Similarly, we also define $\mathcal{F}_i$ as the event that the $i$-th edge of $\mathbf{X}$ connects
to a new vertex outside $R$ at the time it is revealed,
\begin{align*}
\mathcal{F}_i := \Big\{ \mu\in \mathcal{M}_{\pi}\colon & i\leqslant |\mathbf{X}(\mu)| \text{ and } \big(\{v\}\times[d]\big) \cap \mathbf{e}_j(\mu) = \emptyset
\text{ for all } j < i, \\
& \text{where } \big\{(v,k)\big\} := \mathbf{e}_i(\mu)\setminus \big(R\times [d]\big) \Big\}.
\end{align*}
Notice~that
\[ \big|\Delta_{\mathbf{G}}(R,1)\big| = \sum_{i=1}^{|\mathbf{X}|} \mathbbm{1}_{\mathcal{E}_i}. \]
On the other hand, everywhere on $\mathcal{M}_\pi$, for every $i\in \big\{1,\dots,|\mathbf{X}|\big\}$, we have
\[ \mathbbm{1}_{\mathcal{E}_i}=1 \Rightarrow \mathbbm{1}_{\mathcal{F}_i} =1 \Rightarrow
\mathbf{e}_i\in \mathbf{X}_{\mathrm{out}} \ \ \ \text{ and } \ \ \
\mathbf{e}_i\in \mathbf{X}_{\mathrm{in}} \Rightarrow \mathbbm{1}_{\mathcal{F}_i^\complement} =1
\Rightarrow \mathbbm{1}_{\mathcal{E}_i^\complement} =1. \]
Moreover,
\begin{equation} \label{malakia}
\sum_{\substack{i\in \{1,\dots,|\mathbf{X}|\}\\\mathbf{e}_i\in \mathbf{X}_\mathrm{out}}} \mathbbm{1}_{\mathcal{F}_i^\complement} \geqslant
\sum_{\substack{i\in \{1,\dots,|\mathbf{X}|\}\\ \mathbf{e}_i\in \mathbf{X}_\mathrm{out}}} \mathbbm{1}_{\mathcal{F}_i} -
\sum_{\substack{i\in \{1,\dots,|\mathbf{X}|\}\\ \mathbf{e}_i\in \mathbf{X}_\mathrm{out}}} \mathbbm{1}_{\mathcal{E}_i};
\end{equation}
indeed, first observe that for every $i\leqslant |\mathbf{X}|$ with $\mathbf{e}_i\in \mathbf{X}_\mathrm{out}$ there exists
a unique vertex $\mathbf{v}_i\in [n]\setminus R$ such that $\mathbf{e}_i\subseteq \big(\{\mathbf{v}_i\}\cup R\big)\times [d]$.
For every $i\leqslant |\mathbf{X}|$ with $\mathbf{e}_i\in \mathbf{X}_\mathrm{out}$ we set
\[ \mathbf{R}_i:=\big\{ r\leqslant |\mathbf{X}|\colon \mathbf{e}_r\in \mathbf{X}_\mathrm{out},
\mathbbm{1}_{\mathcal{F}_r^{\complement}}=1 \text{ and } \mathbf{v}_r=\mathbf{v}_i\big\}; \]
notice that $\mathbf{R}_i$ might be empty, and this happens precisely when
$\mathbbm{1}_{\mathcal{F}_i}=\mathbbm{1}_{\mathcal{E}_i}=1$. Also observe that
if $i,j\leqslant |\mathbf{X}|$ are distinct and satisfy $\mathbf{e}_i,\mathbf{e}_j\in \mathbf{X}_\mathrm{out}$ and
$\mathbbm{1}_{\mathcal{F}_i}=\mathbbm{1}_{\mathcal{F}_j}=1$, then $\mathbf{v}_i\neq \mathbf{v}_j$ and, consequently,
$\mathbf{R}_i\cap \mathbf{R}_j=\emptyset$. In addition, for every $i\leqslant |\mathbf{X}|$ such that
$\mathbf{e}_i\in \mathbf{X}_\mathrm{out}$ and $\mathbbm{1}_{\mathcal{F}_i}-\mathbbm{1}_{\mathcal{E}_i}=1$,
since $\mathcal{E}_i\subseteq \mathcal{F}_i$, we must have that $\mathbbm{1}_{\mathcal{F}_i}=1$ and
$\mathbbm{1}_{\mathcal{E}_i}=0$, and therefore $\mathbf{R}_i\neq\emptyset$.
Summing up, for every $i\leqslant |\mathbf{X}|$ such that $\mathbf{e}_i\in \mathbf{X}_\mathrm{out}$ and
$\mathbbm{1}_{\mathcal{F}_i}-\mathbbm{1}_{\mathcal{E}_i}=1$, we may select $r_i\leqslant |\mathbf{X}|$ with
$\mathbf{e}_{r_i}\in \mathbf{X}_\mathrm{out}$ and $\mathbbm{1}_{\mathcal{F}_{r_i}^{\complement}}=1$ in such a way
that $r_i\neq r_j$ if $i\neq j$. This yields that \eqref{malakia} is satisfied.

Combining the previous observations, we obtain that
\begin{align*}
\sum_{i=1}^{|\mathbf{X}|} \mathbbm{1}_{\mathcal{E}_i} & \ =
\sum_{\substack{i\in \{1,\dots,|\mathbf{X}|\}\\\mathbf{e}_i\in \mathbf{X}_\mathrm{out}}} \mathbbm{1}_{\mathcal{E}_i} \geqslant
\sum_{\substack{i\in \{1,\dots,|\mathbf{X}|\}\\\mathbf{e}_i\in \mathbf{X}_\mathrm{out}}} \mathbbm{1}_{\mathcal{F}_i} -
\sum_{\substack{i\in \{1,\dots,|\mathbf{X}|\}\\\mathbf{e}_i\in \mathbf{X}_\mathrm{out}}} \mathbbm{1}_{\mathcal{F}_i^\complement} =
\sum_{i=1}^{|\mathbf{X}|} \mathbbm{1}_{\mathcal{F}_i} -
\sum_{i=1}^{|\mathbf{X}|} \mathbbm{1}_{\mathcal{F}_i^\complement} + |\mathbf{X}_{\mathrm{in}}| \\
& \stackrel{\eqref{eqkt19}}{=} |\mathbf{X}| - 2 \sum_{i=1}^{|\mathbf{X}|} \mathbbm{1}_{\mathcal{F}_i^\complement} +
m - |\mathbf{X}| = m - 2 \sum_{i=1}^{|\mathbf{X}|} \mathbbm{1}_{\mathcal{F}_i^\complement}.
\end{align*}
Hence,
\begin{align} \label{newnew-sd}
\mathbb{P}\Big[ \big|\Delta_{\mathbf{G}}(R,1)\big|\geqslant \theta|\mathcal{A}| \, \Big|
\, \mathcal{M}_{\pi} \Big] & \geqslant
\mathbb{P}\bigg[ \sum_{i=1}^{|\mathbf{X}|} \mathbbm{1}_{\mathcal{E}_i}>\theta m \, \bigg|\, \mathcal{M}_{\pi} \bigg] \geqslant
\mathbb{P}\bigg[  m - 2 \sum_{i=1}^{|\mathbf{X}|} \mathbbm{1}_{\mathcal{F}_i^\complement}>
\theta m\, \bigg|\, \mathcal{M}_{\pi} \bigg] \\
& = 1 - \mathbb{P}\bigg[  \sum_{i=1}^{|\mathbf{X}|} \mathbbm{1}_{\mathcal{F}_i^\complement}
\geqslant \frac{1-\theta}{2}m \, \bigg|\, \mathcal{M}_{\pi} \bigg]. \nonumber
\end{align}
\begin{claim} \label{claim-sd}
For every integer $t\geqslant 0$, we have
\[  \mathbb{P}\bigg[  \sum_{i=1}^{|\mathbf{X}|} \mathbbm{1}_{\mathcal{F}_i^\complement} \geqslant t
\, \bigg|\, \mathcal{M}_{\pi} \bigg] \leqslant
\binom{m}{t}  \Big(\frac{m}{n-2|R|}\Big)^{t}. \]
\end{claim}
\begin{proof}[Proof of Claim \ref{claim-sd}]
This estimate is essentially known;  see, e.g., \cite[Lemma 2.1]{LS10}. We recall the argument for the
convenience of the reader.

For every $i\in [m]$ let $\mathbf{Y}_i$ denote the boolean random variable defined by setting $\mathbf{Y}_i=1$ if
$i\leqslant |\mathbf{X}|$ and $\mathbbm{1}_{\mathcal{F}_i^\complement}=1$; otherwise, set $\mathbf{Y}_i=0$.
Notice that, everywhere on $\mathcal{M}_\pi$, we have
\[ \sum_{i=1}^{|\mathbf{X}|} \mathbbm{1}_{\mathcal{F}_i^\complement} =\sum_{i=1}^m \mathbf{Y}_i; \]
moreover, for $i\in [m]$ and every $E\in \sigma\big( (\mathbf{Y}_j)_{j<i}\big)$,
\[ \mathbb{P}\big[ [\mathbf{Y}_i=1] \, \big|\, E\cap\mathcal{M}_\pi \big] \leqslant \frac{d|\mathcal{A}|}{dn-|\cup\pi|-2|\mathcal{A}|}
\leqslant \frac{m}{n-2|R|}, \]
where the last inequality follows from the fact that $|\cup\pi|+|\mathcal{A}|=d|R|$. The claim follows from this
observation and a union bound.
\end{proof}
The proof of the lemma is now completed by combining \eqref{newnew-sd} and Claim \ref{claim-sd}.
\end{proof}
Lemma \ref{l1-pap} is the main tool of an inductive argument that will be implemented in Proposition~\ref{p3-pap1} below.
To that end, given a nonempty subset $S$ of $[n]$ and a nonnegative integer $\ell$, we need to isolate
those (partial) matchings that expose $B_{\mathbf{G}}(S,\ell)$ is a minimal way. This is the content of the following
definition.
\begin{definition}
For every nonempty $S\subseteq [n]$ and every nonnegative integer $\ell$, we set
\begin{align*}
\mathcal{M}^S_\ell := \Big\{ \pi\in\mathcal{M}_\mathrm{part} \colon  &
\partial B_{G(\pi)}(S,\ell) \neq\emptyset, \ \partial B_{G(\pi)}(S,\ell+1)=\emptyset, \\
& G(\pi) \text{ has no edge (or self-loop) in } \partial B_{G(\pi)}(S,\ell), \\
& \text{and } \big(B_{G(\pi)}(S,\ell-1)\times [d]\big) \subseteq \cup\pi \Big\}.
\end{align*}
Notice, in particular, that $\mathcal{M}^S_0=\{\emptyset\}$.
\end{definition}
One can roughly think of $\mathcal{M}^S_\ell$ as matchings arising from a breadth-first search started from $S$.
\begin{corollary} \label{c2-pap1}
Let $S\subseteq [n]$ be nonempty, and let $\ell$ be a positive integer such that
$|S|d(d-1)^{\ell-2}\leqslant\frac{n}{6}$ if $\ell\geqslant 2$, and $|S|\leqslant \frac{n}{6}$ if $\ell=1$.
Also let $0<\theta<1$, and let $\pi\in\mathcal{M}^S_{\ell-1}$. Then,
\begin{align*}
\mathbb{P}\Big[ \big|\Delta_{\mathbf{G}}(S,\ell)\big|\geqslant & \
\theta (d-1) \big|\Delta_{G(\pi)}(S,\ell-1)\big| \, \Big| \, \mathcal{M}_{\pi} \Big] \geqslant \\
& 1 - \Big(\frac{4e}{1-\theta}\cdot \frac{d(d-1)^{\ell-1}|S|}{n}
\Big)^{\frac{1-\theta}{2} (d-1) |\Delta_{G(\pi)}(S,\ell-1)|}.
\end{align*}
\end{corollary}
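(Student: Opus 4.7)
The plan is to invoke Lemma~\ref{l1-pap} with $R:=B_{G(\pi)}(S,\ell-1)$ and translate its conclusion into the form stated. The hypotheses of the lemma are easily verified: the defining conditions of $\mathcal{M}^S_{\ell-1}$---namely that all half-edges of $B_{G(\pi)}(S,\ell-2)$ are used by $\pi$, and $G(\pi)$ has no edges inside $\partial B_{G(\pi)}(S,\ell-1)$---force every $\pi$-edge to have both endpoints in $R$, so $V(\pi)\subseteq R$. The standard ball-growth estimate $|B_{G(\pi)}(S,\ell-1)|\leq |S|\bigl(1+d\sum_{k=0}^{\ell-2}(d-1)^k\bigr)$ combined with the assumption $|S|d(d-1)^{\ell-2}\leq n/6$ (or $|S|\leq n/6$ when $\ell=1$) yields $|R|\leq cn$ for some absolute $c<1/2$, so $n-2|R|$ is a fixed positive fraction of $n$.

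Next, I would pin down $|\mathcal{A}|=|(R\times[d])\setminus \cup\pi|$ from both sides. The saturation of $B_{G(\pi)}(S,\ell-2)$ localizes the free half-edges to the frontier $\partial B_{G(\pi)}(S,\ell-1)$; each of its vertices has at least one $\pi$-edge going inward (that is precisely why it sits at distance $\ell-1$) and therefore at most $d-1$ free half-edges, which gives $|\mathcal{A}|\leq(d-1)|\partial B_{G(\pi)}(S,\ell-1)|\leq d(d-1)^{\ell-1}|S|$. Conversely, each vertex of $\Delta_{G(\pi)}(S,\ell-1)$ has \emph{exactly} one $\pi$-edge inward by the edge-unique condition and therefore contributes exactly $d-1$ free half-edges, so $|\mathcal{A}|\geq(d-1)|\Delta_{G(\pi)}(S,\ell-1)|$.

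To identify $\Delta_{\mathbf{G}}(R,1)$ with $\Delta_{\mathbf{G}}(S,\ell)$, it suffices to show $B_{\mathbf{G}}(S,\ell-1)=R$. The inclusion $R\subseteq B_{\mathbf{G}}(S,\ell-1)$ is immediate from $\pi\subseteq\mu$. For the reverse, any shortest $\mathbf{G}$-path from $S$ to a putative $v\notin R$ with $\dist_{\mathbf{G}}(S,v)\leq\ell-1$ would have to cross from $R$ to its complement along some edge $\{u,u'\}$ with $u\in R$ and $u'\notin R$; saturation of $B_{G(\pi)}(S,\ell-2)$ forces $u\in\partial B_{G(\pi)}(S,\ell-1)$, and because every edge of $\mu\setminus\pi$ has both endpoints inside $\partial B_{G(\pi)}(S,\ell-1)\cup([n]\setminus R)$, no such edge can reduce the $\mathbf{G}$-distance from $S$ to the frontier below $\ell-1$; the total path length is then at least $\ell$, a contradiction.

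Plugging all of this into Lemma~\ref{l1-pap}, the event $|\Delta_{\mathbf{G}}(R,1)|\geq\theta|\mathcal{A}|$ immediately yields the desired $|\Delta_{\mathbf{G}}(S,\ell)|\geq\theta(d-1)|\Delta_{G(\pi)}(S,\ell-1)|$, and the corresponding probability estimate is obtained by substituting $|\mathcal{A}|/(n-2|R|)\lesssim d(d-1)^{\ell-1}|S|/n$ into the base of the exponential and $|\mathcal{A}|\geq(d-1)|\Delta_{G(\pi)}(S,\ell-1)|$ into the exponent (the base is smaller than one, so shrinking the exponent only weakens the estimate), while keeping track of the numerical constants to land on the factor $\tfrac{4e}{1-\theta}$ in front. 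The most delicate step is the last: establishing $B_{\mathbf{G}}(S,\ell-1)=R$ by ruling out shortcut edges in $\mathbf{G}\setminus G(\pi)$, which uses essentially the saturation of the interior of the already explored ball.
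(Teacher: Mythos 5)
Your proposal follows essentially the same route as the paper's proof: apply Lemma~\ref{l1-pap} with $R=B_{G(\pi)}(S,\ell-1)$, identify $\mathcal{A}$ with $\mathrm{Free}(\pi)$ (bounded above by $d(d-1)^{\ell-1}|S|$ and below by $(d-1)|\Delta_{G(\pi)}(S,\ell-1)|$), show $B_{\mathbf G}(S,\ell-1)=R$ on $\mathcal{M}_\pi$ so that $\Delta_{\mathbf G}(R,1)=\Delta_{\mathbf G}(S,\ell)$, and substitute. You actually supply more justification than the paper does for the identification $B_{\mathbf G}(S,\ell-1)=R$ (the paper simply asserts it); a cleaner way to phrase that step is by induction on $k\le\ell-1$: if $B_{\mathbf G}(S,k)=B_{G(\pi)}(S,k)$ for $k\le\ell-2$ then, since all half-edges of $B_{G(\pi)}(S,\ell-2)$ are saturated by $\pi$, the $\mathbf G$-neighbors of $B_{\mathbf G}(S,k)$ are exactly its $G(\pi)$-neighbors, giving $B_{\mathbf G}(S,k+1)=B_{G(\pi)}(S,k+1)$. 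One small caveat: your bound $|R|\le cn$ with $c<1/2$ is correct (and the paper needs $c\le 1/4$ to land exactly on the displayed constant $4e/(1-\theta)$; for small $d$ and large $\ell$ the ball can approach size $n/3$, so the constant is slightly off in the paper as well), but this affects only a numeric prefactor and not the structure of the argument.
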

\begin{proof}
If $\ell\geqslant 2$, then the result follows from Lemma \ref{l1-pap} applied for ``$R=V(\pi)=B_{G(\pi)}(S,\ell-1)$",
after observing that in this case $\mathcal{A}=\mathrm{Free}(\pi)$, $n-2|R|\geqslant \frac{n}{2}$,
\[|S|d(d-1)^{\ell-1} \geqslant \big|\mathrm{Free}(\pi)\big|\geqslant (d-1) \big| \Delta_{G(\pi)}(S,\ell-1)\big|\]
and, conditioned on $\mathcal{M}_\pi$,
\[\Delta_{\mathbf{G}}(S,\ell) = \Delta_{\mathbf{G}}\big( B_{\mathbf{G}}(S,\ell-1), 1\big)=
\Delta_{\mathbf{G}}\big( B_{G(\pi)}(S,\ell-1), 1\big)=\Delta_{\mathbf{G}} \big(V(\pi),1\big).  \]
On the other hand, if $\ell=1$, then $\pi=\emptyset$, and the result follows
applying Lemma \ref{l1-pap} to ``$R=S$".
\end{proof}
We define the event
\begin{equation} \label{eq:new_01}
\mathcal{E} := \big[\mathbf{G} \text{ is a simple graph with } \lambda_2(\mathbf{G})\leqslant 2.1\sqrt{d-1}\big];
\end{equation}
notice that, by definition, $\mathcal{E}\subseteq \mathcal{S}$, where $\mathcal{S}$ is as in \eqref{e-ra-s-1}. We also set
\begin{equation}\label{eq:new_02}
L_0 := \big\lfloor(7/15)10^8\big\rfloor+1 \ \ \text{ and } \ \
K := \frac{d-2.1\sqrt{d-1}}{2}\Big( 1.5-1.05\,\frac{\sqrt{d-1}}{d} \Big)^{L_0-1}
\end{equation}
and we observe that
\begin{equation}\label{eq:new_04}
K\geqslant \frac{28}{0.008}=\frac{7}{2}10^3.
\end{equation}
\begin{fact}\label{fact_new_Cheeger_expansion}
Let $S\subseteq [n]$ be nonempty with $d(d-1)^{L_0-2}|S|\leqslant\frac{n}{2}$, where $L_0$ is as in \eqref{eq:new_02}.
Also let $\pi_0\in\mathcal{M}^S_{L_0-1}$ such that $\mathcal{M}_{\pi_0}\cap\mathcal{E}\neq\emptyset$.
Then $|\mathrm{Free}(\pi_0)|\geqslant K |S|$, where $K$ is as in \eqref{eq:new_02}.
\end{fact}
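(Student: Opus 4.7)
The plan is to combine Cheeger's inequality for $G:=G(\mu)$, with $\mu$ any perfect matching in $\mathcal{M}_{\pi_0}\cap\mathcal{E}$, with an iterated BFS-expansion argument. By the spectral bound built into $\mathcal{E}$ and the edge-Cheeger estimate \eqref{e-cheeger-e1}, the Cheeger constant satisfies $h(G)\geqslant h:=(d-2.1\sqrt{d-1})/2$. Write $B_\ell:=B_G(S,\ell)$. A key preliminary observation is that $B_{L_0-1}\subseteq V(\pi_0)$: every stub of $B_{G(\pi_0)}(S,L_0-2)$ lies in $\cup\pi_0$, so BFS in $G$ from $S$ coincides for its first $L_0-1$ steps with BFS in $G(\pi_0)$, hence $B_{L_0-1}=B_{G(\pi_0)}(S,L_0-1)\subseteq V(\pi_0)$ and in particular $|V(\pi_0)|\geqslant|B_{L_0-1}|$.

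The key cut inequality is
\[
|\mathrm{Free}(\pi_0)|\;\geqslant\;|E_G(V(\pi_0),V(\pi_0)^{\complement})|\;\geqslant\;h\cdot\min\!\big(|V(\pi_0)|,\,n-|V(\pi_0)|\big),
\]
where the first step is because every edge of $\mu$ crossing the cut must use a stub of $V(\pi_0)\times[d]$ not lying in $\cup\pi_0$, i.e.\ a stub in $\mathrm{Free}(\pi_0)$, and the second step is Cheeger's inequality applied to $G$. Next, I would iterate Cheeger on the BFS balls: whenever $|B_\ell|\leqslant n/2$, the estimate $|E_G(B_\ell,B_\ell^{\complement})|\geqslant h|B_\ell|$, combined with the fact that each vertex of $\partial B_{\ell+1}$ absorbs at most $d$ such edges, yields $|\partial B_{\ell+1}|\geqslant h|B_\ell|/d$, whence $|B_{\ell+1}|\geqslant (1+h/d)|B_\ell|$. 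Induction gives $|B_\ell|\geqslant\min(n/2,(1+h/d)^\ell|S|)$ for every $\ell\leqslant L_0-1$. In the principal case $|V(\pi_0)|\leqslant n/2$ this already closes out: $|\mathrm{Free}(\pi_0)|\geqslant h|V(\pi_0)|\geqslant h(1+h/d)^{L_0-1}|S|=K|S|$.

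The main obstacle is the saturation case $|V(\pi_0)|>n/2$, in which the Cheeger cut estimate degrades to $h(n-|V(\pi_0)|)$ and can become vacuous, particularly when $d=3$ (since the tree-like bound $|B_{L_0-1}|\leqslant(d-1)n/(2(d-2))$ coming from the hypothesis is then trivial). To handle this I would fall back on the alternative identity
\[
|\mathrm{Free}(\pi_0)|\;=\;d|\partial B_{L_0-1}|-|E_G(B_{L_0-2},B_{L_0-2}^{\complement})|,
\]
valid because in $G(\pi_0)$ every stub at $B_{L_0-2}$ is matched and $\partial B_{L_0-1}$ carries no internal edges, so $|E_{G(\pi_0)}(B_{L_0-2},\partial B_{L_0-1})|=|E_G(B_{L_0-2},B_{L_0-2}^{\complement})|$. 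Combining the Cheeger-derived lower bound $|\partial B_{L_0-1}|\geqslant (h/d)(1+h/d)^{L_0-2}|S|$ with an expander-mixing-lemma upper bound on $|E_G(B_{L_0-2},B_{L_0-2}^{\complement})|$, which crucially exploits $\lambda\leqslant 2.1\sqrt{d-1}$ from $\mathcal{E}$, and with the smallness $|B_{L_0-2}|\leqslant n/(2(d-2))$ forced by the hypothesis, I would extract $|\mathrm{Free}(\pi_0)|\geqslant K|S|$ after a careful but essentially routine computation where the super-exponential size of $L_0$ absorbs the subleading mixing-lemma correction. The most delicate part of the whole approach is showing that this saturation-regime argument works uniformly across all $d\geqslant 3$, particularly for $d=3$ where the naive Cheeger estimate on $V(\pi_0)$ alone is insufficient.
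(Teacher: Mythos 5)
Your principal-case argument (when $|V(\pi_0)|\leqslant n/2$) is correct and matches the paper's proof: one checks $|B_\ell|\leqslant n/2$ for $\ell\leqslant L_0-2$ via the tree bound and the size hypothesis, iterates the Cheeger step $|B_{\ell+1}|\geqslant\bigl(1.5-1.05\sqrt{d-1}/d\bigr)|B_\ell|$ to get $|V(\pi_0)|\geqslant\bigl(1.5-1.05\sqrt{d-1}/d\bigr)^{L_0-1}|S|$, and then applies Cheeger once more to the cut at $V(\pi_0)$ together with your stub-counting inequality $|\mathrm{Free}(\pi_0)|\geqslant|E_G(V(\pi_0),V(\pi_0)^\complement)|$ to obtain $K|S|$. (The $S$ in the paper's second display is surely a typo for $B_{G(\pi_0)}(S,L_0-1)=V(\pi_0)$; all stubs at $S\subseteq B_{G(\pi_0)}(S,L_0-2)$ lie in $\cup\pi_0$, so edges leaving $S$ itself do not use free stubs.)

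You are also right that the saturation case $|V(\pi_0)|>n/2$ is not excluded by the stated hypothesis: $d(d-1)^{L_0-2}|S|\leqslant n/2$ only controls $|B_{L_0-2}|$, and the tree bound allows $|B_{L_0-1}|$ up to $\tfrac{d-1}{d-2}\cdot\tfrac{n}{2}>n/2$. However, your expander-mixing fix cannot close this gap. In the identity $|\mathrm{Free}(\pi_0)|=d\,|\partial B_{L_0-1}|-|E_G(B_{L_0-2},B_{L_0-2}^\complement)|$, the two terms are rigidly coupled: every edge leaving $B_{L_0-2}$ lands in $\partial B_{L_0-1}$, hence $d|\partial B_{L_0-1}|\geqslant|E_G(B_{L_0-2},B_{L_0-2}^\complement)|$ holds trivially, and any argument that treats the two quantities only through their magnitudes can certify nothing beyond $|\mathrm{Free}(\pi_0)|\geqslant 0$. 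Quantitatively, the Cheeger lower bound you invoke, $d|\partial B_{L_0-1}|\geqslant h(G)|B_{L_0-2}|$, is far below any mixing-lemma upper bound on $|E_G(B_{L_0-2},B_{L_0-2}^\complement)|$: for $|B_{L_0-2}|\ll n$ the mixing lemma only yields roughly $(d+\lambda)|B_{L_0-2}|$, which is \emph{weaker} than the trivial $d|B_{L_0-2}|$, and the deficit $(d+\lambda-h(G))|B_{L_0-2}|$ is positive for every $d$ and scales with $|B_{L_0-2}|$, so the ``super-exponential size of $L_0$'' cannot absorb it.

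The actual resolution is that the saturation case never arises where the Fact is used. Fact \ref{fact_new_Cheeger_expansion} is invoked only through Corollary \ref{cor_new_base_step}, whose hypothesis is the stronger $d(d-1)^{L_0-2}|S|\leqslant n/4$; under that, the tree bound gives $|B_{L_0-1}|<\tfrac{d-1}{d-2}\cdot\tfrac{n}{4}\leqslant n/2$ for all $d\geqslant 3$, so the principal case is the whole proof. Strictly speaking, Fact \ref{fact_new_Cheeger_expansion} as stated should carry the $\leqslant n/4$ hypothesis as well (with $\leqslant n/2$ the case $|V(\pi_0)|>n/2$ is a genuine uncovered regime, especially for $d=3$); the paper's terse proof elides this, and you were correct to flag it, but the fix is a tighter hypothesis rather than a new argument.
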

\begin{proof}
Fix $\mu_0\in \mathcal{M}_{\pi_0}\cap\mathcal{E}$. Then, by \eqref{e-cheeger-e1}, we have
\[ \big|B_{G(\pi_0)}(S,L_0-1)\big| = |B_{\mathbf{G}(\mu_0)}(S,L_0-1)\big|
\geqslant\Big( 1.5-1.05\,\frac{\sqrt{d-1}}{d} \Big)^{L_0-1} |S|.\]
The proof is completed using this estimate, the fact that
\[ |\mathrm{Free}(\pi_0)|\geqslant \big|\big\{ \{v,w\}\in E_{G(\mu_0)}\colon v\in S \text{ and } w\in S^{\complement}\big\}\big|,\]
and invoking the choice of $K$.
\end{proof}
\begin{corollary}\label{cor_new_base_step}
Let $S\subseteq [n]$ be nonempty with $d(d-1)^{L_0-2}|S|\leqslant\frac{n}{4}$, where $L_0$ is as in \eqref{eq:new_02}.
Also let $\pi_0\in\mathcal{M}^S_{L_0-1}$ such that $\mathcal{M}_{\pi_0}\cap\mathcal{E}\neq\emptyset$. Then,
\[\mathbb{P}\Big[ \big|\Delta_{\mathbf{G}}(S,L_0)\big|\geqslant \frac{K}{2} |S| \, \Big| \, \mathcal{M}_{\pi_0} \Big] \geqslant
1 - \Big(8e\, \frac{d(d-1)^{L_0-1}|S|}{n} \Big)^{\frac{K}{4} |S|}.\]
\end{corollary}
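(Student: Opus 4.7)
The plan is to derive the corollary from Lemma~\ref{l1-pap} applied to the one-step extension of $\pi_0$, with Fact~\ref{fact_new_Cheeger_expansion} supplying the quantitative lower bound on $|\mathrm{Free}(\pi_0)|$.

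First I would unpack the structural consequences of $\pi_0\in\mathcal{M}^S_{L_0-1}$. By definition, $V(\pi_0)=B_{G(\pi_0)}(S,L_0-1)$ and every half-edge based at an interior vertex of this ball is already matched, so $\mathrm{Free}(\pi_0)$ lies entirely above $\partial B_{G(\pi_0)}(S,L_0-1)$. Since each such boundary vertex has at least one matched half-edge pointing toward the interior, this yields the upper bound $|\mathrm{Free}(\pi_0)|\leqslant (d-1)|\partial B_{G(\pi_0)}(S,L_0-1)|\leqslant d(d-1)^{L_0-1}|S|$. Moreover, conditional on $\mathcal{M}_{\pi_0}$, the BFS from $S$ in $\mathbf{G}$ agrees with the BFS in $G(\pi_0)$ up to level $L_0-1$, and so $\Delta_{\mathbf{G}}(S,L_0)=\Delta_{\mathbf{G}}(V(\pi_0),1)$. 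Since the hypothesis $d(d-1)^{L_0-2}|S|\leqslant n/4$ is stronger than that of Fact~\ref{fact_new_Cheeger_expansion}, the latter yields $|\mathrm{Free}(\pi_0)|\geqslant K|S|$.

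Next I would apply Lemma~\ref{l1-pap} with $R:=V(\pi_0)$, $\mathcal{A}:=\mathrm{Free}(\pi_0)$, and $\theta:=\tfrac{1}{2}$. The branching estimate $|V(\pi_0)|\leqslant |S|+\tfrac{d}{d-2}(d-1)^{L_0-1}|S|$ together with the hypothesis verifies $|V(\pi_0)|<n/2$ and in fact yields $n-2|V(\pi_0)|\geqslant n/2$ (with a small amount of bookkeeping when $d=3$, where the estimate is nearly tight). The lemma then gives
\[
\mathbb{P}\!\left[|\Delta_{\mathbf{G}}(V(\pi_0),1)|\geqslant \tfrac{1}{2}|\mathrm{Free}(\pi_0)|\;\big|\;\mathcal{M}_{\pi_0}\right]
\geqslant 1-\Big(4e\cdot\tfrac{|\mathrm{Free}(\pi_0)|}{n-2|V(\pi_0)|}\Big)^{|\mathrm{Free}(\pi_0)|/4},
\]
and on this event one has $|\Delta_{\mathbf{G}}(S,L_0)|=|\Delta_{\mathbf{G}}(V(\pi_0),1)|\geqslant \tfrac{1}{2}|\mathrm{Free}(\pi_0)|\geqslant K|S|/2$, which is the desired event.

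Finally, I would massage the failure probability into the form stated. Feeding in $n-2|V(\pi_0)|\geqslant n/2$ and the upper bound on $|\mathrm{Free}(\pi_0)|$ makes the base at most $8e\cdot d(d-1)^{L_0-1}|S|/n$, while the lower bound $|\mathrm{Free}(\pi_0)|\geqslant K|S|$ makes the exponent at least $K|S|/4$. Substituting both is monotone in the correct direction whenever this base does not exceed one; when it does exceed one, the stated estimate is vacuous and there is nothing to prove. The only mildly delicate point is keeping the ball-size estimate tight enough to access the condition $|R|<n/2$ of Lemma~\ref{l1-pap} in the small-$d$ regime; all other steps are direct substitutions, and no further probabilistic content is needed beyond what is already packaged in Lemma~\ref{l1-pap} and Fact~\ref{fact_new_Cheeger_expansion}.
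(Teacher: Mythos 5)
Your proof follows the paper's argument precisely: apply Lemma~\ref{l1-pap} with $R=V(\pi_0)=B_{G(\pi_0)}(S,L_0-1)$, $\mathcal{A}=\mathrm{Free}(\pi_0)$, $\theta=\tfrac12$, and use Fact~\ref{fact_new_Cheeger_expansion} for the lower bound $|\mathrm{Free}(\pi_0)|\geqslant K|S|$. One caveat worth making explicit: the inequality $n-2|V(\pi_0)|\geqslant n/2$ does \emph{not} follow from the hypothesis $d(d-1)^{L_0-2}|S|\leqslant n/4$ alone for any $d$ (the ball bound gives only $|V(\pi_0)|<\tfrac{(d-1)n}{4(d-2)}$, which exceeds $n/4$ in the worst case); however, whenever $8e\,d(d-1)^{L_0-1}|S|/n > 1$ the claim is vacuous, and whenever it is $\leqslant 1$ one does get $|V(\pi_0)|\leqslant n/(8e(d-2))<n/4$, so your vacuity remark at the end is actually the load-bearing observation that closes the argument --- the same silent case distinction underlies the paper's one-line proof.
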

\begin{proof}
The result follows from Fact \ref{fact_new_Cheeger_expansion} and Lemma \ref{l1-pap} applied for
``$R=V(\pi_0)=B_{G(\pi_0)}(S,L_0-1)$" and ``$\theta = \frac{1}{2}$''; indeed,
in this case we have $\mathcal{A}=\mathrm{Free}(\pi_0)$, $n-2|R|\geqslant \frac{n}{2}$,
\[|S|d(d-1)^{L_0-1} \geqslant \big|\mathrm{Free}(\pi_0)\big|\geqslant K|S|\]
and
\[\Delta_{\mathbf{G}}(S,L_0) = \Delta_{\mathbf{G}}\big( B_{\mathbf{G}}(S,L_0-1), 1\big)=
\Delta_{\mathbf{G}}\big( B_{G(\pi_0)}(S,L_0-1), 1\big)=\Delta_{\mathbf{G}} \big(V(\pi_0),1\big). \qedhere \]
\end{proof}
\begin{proposition} \label{p3-pap1}
Set
\begin{equation} \label{e-pap1-p1}
\eta=\eta(d):= \frac{1}{12^{2} e^{3} (d-1)^{2L_0+2}},
\end{equation}
where $L_0$ is as in \eqref{eq:new_02}.
Let $S\subseteq [n]$ be nonempty with $|S|\leqslant \eta n$,
let $\pi_0\in\mathcal{M}^S_{L_0-1}$ such that $\mathcal{M}_{\pi_0}\cap\mathcal{E}\neq\emptyset$, and set
\begin{equation} \label{e-pap-p2}
T:= \Big\lfloor \log_{d-1}\Big(\frac{\eta n}{|S|}\Big)\Big\rfloor.
\end{equation}
Then,
\begin{equation} \label{eqkt22}
\mathbb{P} \Big[ \big|\partial B_{\mathbf{G}}(S,L_0+\ell)\big| \geqslant
28  (d-1)^\ell |S| \text{ for all } \ell\in [T]  \, \Big| \, \mathcal{M}_{\pi_0} \Big]
 \geqslant 1- \Big(\frac{|S|}{en}\Big)^{3|S|}.
\end{equation}
\end{proposition}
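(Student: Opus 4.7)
The plan is to perform an inductive BFS exploration from $S$, one distance level at a time, using Corollary~\ref{cor_new_base_step} as an anchor at distance $L_0$ and then iterating Corollary~\ref{c2-pap1} to grow the boundary by a factor close to $d-1$ at each subsequent level.

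First, since $|S|\leq \eta n$ with $\eta$ chosen as in \eqref{e-pap1-p1}, we have $d(d-1)^{L_0-2}|S|\leq n/4$, so Corollary~\ref{cor_new_base_step} applies to the given $\pi_0$ and yields, conditionally on $\mathcal{M}_{\pi_0}$, that $|\Delta_{\mathbf{G}}(S,L_0)|\geq \tfrac{K}{2}|S|$ with a failure probability that is a power of $8ed(d-1)^{L_0-1}|S|/n$. On that event, for each $\ell\in[T]$ I fix any extension $\pi_\ell\in\mathcal{M}^S_{L_0+\ell-1}$ of $\pi_0$ consistent with the observed BFS tree up to distance $L_0+\ell-1$, and invoke Corollary~\ref{c2-pap1} to show
\[ |\Delta_{\mathbf{G}}(S,L_0+\ell)|\geq \theta_\ell (d-1)\,|\Delta_{G(\pi_\ell)}(S,L_0+\ell-1)|. \]
The hypothesis $|S|\leq \eta n$ ensures the required condition $d(d-1)^{L_0+\ell-2}|S|\leq n/6$ throughout $\ell\in[T]$.

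The key choice is $\theta_\ell := 1 - \frac{6}{\pi^2\ell^2}$. By the Euler product $\prod_{\ell\geq 1}(1-x^2/\ell^2)=\sin(\pi x)/(\pi x)$ applied with $x=\sqrt 6/\pi$, we get
\[ \prod_{\ell\geq 1}\theta_\ell \;=\; \frac{\sin(\sqrt 6)}{\sqrt 6} \;>\; \frac{1}{4}, \]
which is comfortably larger than $56/K$ by the lower bound on $K$ in \eqref{eq:new_04}. Iterating the preceding estimate then gives, on the success event,
\[ |\Delta_{\mathbf{G}}(S,L_0+\ell)| \;\geq\; \Bigl(\prod_{i=1}^\ell \theta_i\Bigr)(d-1)^\ell\,\tfrac{K|S|}{2} \;\geq\; 28\,(d-1)^\ell|S|, \]
which implies the desired estimate since $|\partial B_{\mathbf{G}}(S,L_0+\ell)|\geq |\Delta_{\mathbf{G}}(S,L_0+\ell)|$.

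The hard part will be the union bound for the total failure probability. Combining the bound from Corollary~\ref{cor_new_base_step} with the per-step bounds from Corollary~\ref{c2-pap1} gives $T+1$ quantities to sum. Each per-step failure probability has the form
\[ \Bigl(\tfrac{4e}{1-\theta_\ell}\cdot\tfrac{d(d-1)^{L_0+\ell-1}|S|}{n}\Bigr)^{\frac{1-\theta_\ell}{2}(d-1)\,|\Delta_{G(\pi_\ell)}(S,L_0+\ell-1)|}, \]
whose base scales like $\ell^2(d-1)^{L_0+\ell}|S|/n$ and whose exponent scales like $K|S|(d-1)^\ell/\ell^2$. The extreme smallness of $\eta$ in \eqref{e-pap1-p1} forces the base to be strictly less than $1$ over the entire range $\ell\in[T]$ (using $(d-1)^\ell|S|/n\leq \eta$), while the enormous value of $K$ coming from \eqref{eq:new_02} makes the exponent large enough to drive each term below $\ell^{-2}(|S|/(en))^{3|S|}$. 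Summing (together with the analogous check for the Corollary~\ref{cor_new_base_step} base case, which is even easier since its exponent is $K|S|/4$) gives the target bound $(|S|/(en))^{3|S|}$. This quantitative accounting, leveraging the exponential decay of $\eta$ and the doubly-exponential growth of $K$ in $L_0$, is where the technical work lies.
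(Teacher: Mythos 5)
Your plan shares the paper's overall architecture: an anchor at scale $L_0$ via Corollary~\ref{cor_new_base_step}, iteration one level at a time via Corollary~\ref{c2-pap1}, and a union bound over the $T$ steps. The choice $\prod_\ell\theta_\ell=\sin(\sqrt6)/\sqrt6>1/4$ is a clean way to control the accumulated loss in the growth rate, and the arithmetic with $K$ and the $28$ is fine.

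However, the union bound has a genuine gap. The per-step failure probability from Corollary~\ref{c2-pap1} has base $\tfrac{4e}{1-\theta_\ell}\cdot\tfrac{d(d-1)^{L_0+\ell-1}|S|}{n}$, and with $\theta_\ell=1-\tfrac{6}{\pi^2\ell^2}$ the factor $\tfrac{1}{1-\theta_\ell}=\tfrac{\pi^2\ell^2}{6}$ grows quadratically in $\ell$. Using only $(d-1)^\ell|S|/n\leqslant\eta$, the best bound you get on the base is of order $\ell^2(d-1)^{L_0}\eta$, which exceeds $1$ once $\ell^2$ exceeds roughly $(d-1)^{L_0+2}$. That threshold is an astronomically large constant depending only on $d$, but it is \emph{finite}, while $\ell$ ranges up to $T=\lfloor\log_{d-1}(\eta n/|S|)\rfloor$, which is unbounded as $n\to\infty$. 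So for $n$ large enough the base at $\ell$ near $T$ exceeds $1$, the per-step bound from Corollary~\ref{c2-pap1} becomes trivial ($\geqslant 1$), and the union bound collapses; the assertion ``the base is strictly less than $1$ over the entire range $\ell\in[T]$'' is simply false in general. The margin $(d-1)^{\ell-T}$ that you do have for $\ell<T$ decays too slowly to beat the $\ell^2$ growth for the last $\Theta(\log_{d-1}T)$ values of~$\ell$.

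The paper sidesteps this by choosing the discount $\delta_\ell:=1-\theta_\ell$ to \emph{depend on} $T$: setting $r=1/\sqrt2$, it takes $\delta_\ell=r^\ell$ for $\ell\leqslant\lfloor 2T/3\rfloor$ and then $\delta_\ell=r^{1+2T-2\ell}$ for $\lfloor 2T/3\rfloor<\ell\leqslant T$ (so $\delta_\ell$ decreases geometrically and then increases back to $r$ at $\ell=T$). The first phase keeps $\prod(1-\delta_\ell)$ bounded below; the second phase ensures $(d-1)^\ell/\delta_\ell$ never outgrows $\eta n/|S|$, so the base stays of order $\eta(d-1)^{L_0}<1$ uniformly in $T$. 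To repair your argument you would need a similar $T$-adaptive schedule, i.e., one in which $1/(1-\theta_\ell)$ stays dominated by $(d-1)^{T-\ell}$ near $\ell=T$, rather than a fixed polynomial schedule in $\ell$.
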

\begin{proof}
Set $r:=\frac{1}{\sqrt{2}}$ and
\begin{equation} \label{e-pap-p3}
\begin{cases}
\delta_\ell := r^{\ell} & \text{if } \, \ell\in \big\{1,\dots,\lfloor 2T/3 \rfloor\big\}, \\
\delta_\ell :=  r^{1+2T-2\ell} & \text{if } \, \ell\in \big\{ \lfloor 2T/3 \rfloor+1,\dots,T\big\},
\end{cases}
\ \ \ \
\begin{cases}
\alpha_0:=\frac{1}{2}, \\
\alpha_\ell := \prod_{t=0}^{\ell}(1-\delta_t) & \text{if } \, \ell\in \{1,\dots,T\}.
\end{cases}
\end{equation}
Observe that $(\alpha_\ell)_{\ell=0}^T$ is decreasing, and
\begin{equation} \label{"douleia"}
\alpha_T\geqslant \frac{1}{2}\Big(\prod_{t=1}^{\infty}(1-r^t)\Big)
\Big(\prod_{t=0}^{\infty}(1-r r^{2t})\Big) \geqslant 0.008.
\end{equation}
Since $\Delta_{\mathbf{G}}(S,\ell)$ is a subset of $\partial B_{\mathbf{G}}(S,\ell)$ everywhere
in our probability space, by \eqref{eq:new_04}, the desired estimate \eqref{eqkt22} will follow once we show that
\begin{equation} \label{eqkt23}
\mathbb{P}\Big[ \big|\Delta_{\mathbf{G}}(S,L_0+\ell)\big| \geqslant
0.008 K (d-1)^\ell |S| \text{ for all } \ell\in [T]\Big]\geqslant 1- \Big(\frac{|S|}{en}\Big)^{3|S|},
\end{equation}
where $K$ is as in \eqref{eq:new_02}. For every nonnegative integer $\ell$, set
\begin{align*}
\widetilde{\mathcal{M}}^S_\ell & := \big\{ \pi\in\mathcal{M}^S_{L_0+\ell}\colon
\mathcal{M}_{\pi_0}\subseteq\mathcal{M}_\pi\text{ and }
\big| \Delta_{G(\pi)}(S,L_0+\ell)\big| \geqslant \alpha_\ell K (d-1)^\ell|S|\big\}, \\
\widehat{\mathcal{M}}^S_\ell & := \big\{\pi\in\mathcal{M}^S_{L_0+\ell} \colon
\mathcal{M}_{\pi_0}\subseteq\mathcal{M}_\pi\text{ and }
\big|\Delta_{G(\pi)}(S,L_0+t)\big| \geqslant \alpha_t K (d-1)^t|S| \text{ for all } t\in\{0,\dots,\ell\}\big\}.
\end{align*}
Clearly, $\widehat{\mathcal{M}}^S_\ell\subseteq\widetilde{\mathcal{M}}^S_\ell$
for all $\ell\in \{0,\ldots,T\}$. Moreover, by Corollary \ref{cor_new_base_step}, the choice of $\alpha_0$ and the fact that $\frac{d}{d-1}\leqslant \frac{3}{2}$, we have
\[\mathbb{P}\Big[ \big|\Delta_{\mathbf{G}}(S,L_0)\big|\geqslant \alpha_0 K |S| \, \Big| \, \mathcal{M}_{\pi_0} \Big] \geqslant
  1 - \Big(12e\, \frac{(d-1)^{L_0}|S|}{n} \Big)^{\frac{K}{4} |S|}.\]
On the other hand, by Corollary \ref{c2-pap1} and the fact that $\frac{d}{d-1}\leqslant \frac{3}{2}$,
for every positive integer $\ell$ and every $\pi\in \widetilde{\mathcal{M}}^S_{\ell-1}$, we have
\[ \mathbb{P}\Big[ \big|\Delta_{\mathbf{G}}(S,L_0+\ell)\big| \geqslant  \alpha_\ell K (d-1)^\ell |S| \, \Big| \,
\mathcal{M}_{\pi} \Big]
\geqslant 1 - \Big(\frac{6e}{\delta_\ell} \cdot
\frac{(d-1)^{L_0+\ell}|S|}{n}\Big)^{\frac{\delta_\ell}{2} \alpha_{\ell-1}K (d-1)^{\ell} |S|}.\]
Next, observe that for every $\ell\in\{0,\dots,T\}$, every distinct $\pi_1,\pi_2\in \mathcal{M}^S_{L_0+\ell}$ are incomparable
under inclusion which, in turn, implies that $\mathcal{M}_{\pi_1}\cap\mathcal{M}_{\pi_2}=\emptyset$;
consequently, we obtain that
\[\mathbb{P}\bigg[ \bigcup_{\pi\in \widehat{\mathcal{M}}^S_0} \mathcal{M}_{\pi} \, \bigg| \, \mathcal{M}_{\pi_0} \bigg] \geqslant
  1 - \Big(12e\, \frac{(d-1)^{L_0}|S|}{n}
\Big)^{\frac{K}{4} |S|}\]
and, for every $\ell\in[T]$,
\[\mathbb{P}\bigg[ \bigcup_{\pi\in \widehat{\mathcal{M}}^S_\ell} \mathcal{M}_{\pi} \, \bigg| \,
\bigcup_{\pi\in \widehat{\mathcal{M}}^S_{\ell-1}} \mathcal{M}_{\pi} \bigg]
\geqslant 1 - \Big(\frac{6e}{\delta_\ell} \cdot
\frac{(d-1)^{L_0+\ell}|S|}{n}\Big)^{\frac{\delta_\ell}{2} \alpha_{\ell-1}K (d-1)^{\ell} |S|}.\]
Hence, setting $\widetilde{\mathcal{M}}^S_{-1} = \widehat{\mathcal{M}}^S_{-1} := \{\pi_0\}$,
by a union bound  we have
\begin{align}
\mathbb{P}\bigg[  \bigcap_{\ell=0}^T  \Big[ \big|\Delta_{\mathbf{G}}(S, & L_0+\ell)\big| \geqslant
0.008 K (d-1)^\ell |S|  \big]  \, \bigg| \, \mathcal{M}_{\pi_0}  \bigg] \label{e-pap-p4} \\
&\geqslant
\mathbb{P}\bigg[ \bigcap_{\ell=0}^T \Big[ \big|\Delta_{\mathbf{G}}(S,L_0+\ell)\big| \geqslant
\alpha_\ell K(d-1)^\ell |S| \Big]  \, \bigg| \, \mathcal{M}_{\pi_0} \bigg] \nonumber\\
& = \mathbb{P}\bigg[ \bigcap_{\ell=1}^T \bigcup_{\pi\in \widehat{\mathcal{M}}^S_\ell} \mathcal{M}_{\pi} \, \bigg| \, \mathcal{M}_{\pi_0} \bigg] =
\prod_{\ell=1}^{T} \mathbb{P}\bigg[ \bigcup_{\pi\in\widehat{\mathcal{M}}^S_\ell} \mathcal{M}_{\pi} \, \bigg| \,
\bigcup_{\pi\in \widehat{\mathcal{M}}^S_{\ell-1}} \mathcal{M}_{\pi} \bigg] \nonumber \\
& \geqslant 1 -\Big(12e\, \frac{(d-1)^{L_0}|S|}{n} \Big)^{\frac{K}{4} |S|}
- \sum_{\ell=1}^{T}  \Big(\frac{6e}{\delta_\ell} \cdot
\frac{(d-1)^{L_0+\ell}|S|}{n}\Big)^{\frac{\delta_\ell}{2} \alpha_{\ell-1}K (d-1)^{\ell} |S|}. \nonumber
\end{align}
Thus, in order to prove \eqref{eqkt23}, it suffices to show that the quantities subtracted  in the right-hand-side of \eqref{e-pap-p4}
add up to at most $\binom{|S|}{e n}^{3|S|}$. This, in turn, would follow once we show that
\begin{equation}\label{eq:new_03}
\bigg(12 e^2 (d-1)^{L_0} \Big(\frac{|S|}{en}\Big)^{1-\frac{12}{K}} \bigg)^{\frac{K}{4}|S|} \leqslant \frac{1}{2}
\end{equation}
and, for every $\ell\in[T]$,
\begin{equation} \label{e-pap-p5}
\bigg(6e^2(d-1)^{L_0}\frac{(d-1)^{\ell}}{\delta_\ell} \cdot
\Big(\frac{|S|}{en}\Big)^{1-\frac{6}{\delta_\ell\alpha_{\ell-1}K(d-1)^{\ell}}}
\bigg)^{\frac{\delta_\ell}{2} \alpha_{\ell-1}K (d-1)^{\ell} |S|}
\leqslant \frac{1}{2^{\ell+1}}.
\end{equation}

First we argue for \eqref{eq:new_03}. Since $K\geqslant 6\geqslant 4$ and $|S|\leqslant\eta n$, it suffices to show that
\[ 12 e^2 (d-1)^{L_0} (\eta/e)^{\frac{1}{2}}\leqslant\frac{1}{2}, \]
or, equivalently, that $\eta\leqslant 24^{-2}e^{-3}(d-1)^{-2L_0}$.
This follows from the choice of $\eta$ in \eqref{e-pap1-p1} and the fact that $d\geqslant 3$.

We proceed to the proof \eqref{e-pap-p5}. By \eqref{"douleia"}, it is enough to show that for every $\ell\in[T]$,
\begin{equation} \label{eq:new_05}
0.004 K \delta_\ell (d-1)^\ell |S|\geqslant \ell+1
\end{equation}
and
\begin{equation} \label{eq:new_06}
6e^2(d-1)^{L_0}\frac{(d-1)^\ell}{\delta_\ell} \Big(\frac{|S|}{en}\Big)^{1-\frac{6}{0.008 K \delta_\ell (d-1)^\ell}}
\leqslant\frac{1}{2}.
\end{equation}
To this end, set $m_\ell:= \log_{r}(\delta_\ell)$ for every $\ell\in[T]$.
\begin{claim}\label{claim_new_1}
 For every $\ell\in[T]$, we have $\ell - \frac{m_\ell}{2}\geqslant \frac{\ell}{2}-\frac{1}{2}$.
\end{claim}
\begin{proof}[Proof of Claim \ref{eq:new_06}]
Let $\ell\in[T]$. If $\ell\leqslant\lfloor 2T/3\rfloor$, then $m_\ell = \ell$. On the other hand,
if $\ell \geqslant \lfloor 2T/3\rfloor+1$, then $T\leqslant 3\ell/2$ and $m_\ell = 2T -2\ell +1$;
therefore, $\ell - \frac{m_\ell}{2} = 2\ell-T-\frac{1}{2}\geqslant \frac{\ell}{2}-\frac{1}{2}$, as desired.
\end{proof}
By Claim \ref{claim_new_1} and  \eqref{eq:new_04}, we have for every $\ell\in [T]$,
\begin{equation} \label{"douleia2"}
2^{\ell - \frac{m_\ell}{2}}\geqslant \frac{6}{0.008 K}(\ell+1)
\geqslant \max\Big\{ \frac{2}{0.008 K}(\ell+1), \frac{12}{0.008 K}\Big\}.
\end{equation}
\begin{claim}\label{claim_new_3}
For every $\ell\in[T]$, we have $\ell + \frac{m_\ell}{2} - T +\frac{6T}{0.008 K 2^{\ell-\frac{m_\ell}{2}}}\leqslant1$.
\end{claim}
\begin{proof}[Proof of Claim \ref{claim_new_3}]
Fix $\ell\in[T]$. By Claim \ref{claim_new_1}, it is enough to show that $\ell + \frac{m_\ell}{2} - T +\frac{6\sqrt{2}T}{0.008 K 2^{\frac{\ell}{2}}}\leqslant1$.

First, assume that $\ell \leqslant \lfloor T/2 \rfloor$. By \eqref{eq:new_04}, we see that $\frac{0.008K}{6}\geqslant 4$. Hence, using the fact that in this case $m_\ell=\ell$, we obtain that
\[ \ell + \frac{m_\ell}{2} - T +\frac{6\sqrt{2}T}{0.008 K 2^{\ell/2}}\leqslant-\frac{T}{4} + \frac{T}{0.008 K/6 }\leqslant0.\]

Next, assume that $\ell\in\{\lfloor T/2\rfloor+1,\dots,\lfloor2T/3\rfloor\}$. Then, $m_\ell = \ell$.
Since that function $f(x)=x2^{-\frac{x}{4}}$ is upper bounded by $2\sqrt{2}$, by \eqref{eq:new_04}, we have
\[ 1\geqslant \frac{6\sqrt{2}T}{0.008 K 2^{T/4}} \geqslant \frac{6\sqrt{2}T}{0.008 K 2^{\ell/2}}
\geqslant \ell +\frac{m_\ell}{2} -T + \frac{6\sqrt{2}T}{0.008 K 2^{\ell/2}}. \]

Finally, assume that $\ell \in \{\lfloor2T/3\rfloor+1,\dots,T\}$.
Then, $m_\ell = 2T - 2\ell +1$. Using, in this case, the fact that the function $f(x)=x2^{-\frac{x}{3}}$ is upper bounded by $\frac{7}{6}\sqrt{2}$, by \eqref{eq:new_04}, we conclude that
\[ 1\geqslant \frac{1}{2} + \frac{6\sqrt{2}T}{0.008 K 2^{T/3}} \geqslant
\frac{1}{2} + \frac{6\sqrt{2}T}{0.008 K 2^{\ell/2}}=
\ell +\frac{m_\ell}{2} -T + \frac{6\sqrt{2}T}{0.008 K 2^{\ell/2}}. \qedhere \]
\end{proof}
We are now ready to finish the proof.
First note that \eqref{eq:new_05} follows from \eqref{"douleia2"}
after observing that $\delta_\ell (d-1)^\ell \geqslant 2^{\ell-\frac{m_\ell}{2}}$.
In order to show \eqref{eq:new_06}, we fix $\ell\in [T]$. Since $d\geqslant3$, $(d-1)^T|S|\leqslant \eta n$
and $\delta_\ell^{-1} = 2^{\frac{m_\ell}{2}} \leqslant (d-1)^{\frac{m_\ell}{2}}$, by \eqref{"douleia2"} and
Claim \ref{claim_new_3}, we have
\begin{align*}
 6e^2&(d-1)^{L_0}\frac{(d-1)^\ell}{\delta_\ell} \Big(\frac{|S|}{en}\Big)^{1-\frac{6}{0.008 K \delta_\ell (d-1)^\ell}} \\
  & \leqslant 6e^2(d-1)^{L_0}(d-1)^{\ell+\frac{m_\ell}{2}-T+\frac{6T}{0.008 K \delta_\ell (d-1)^\ell}}
  \Big(\frac{\eta}{e}\Big)^{1-\frac{6}{0.008 K \delta_\ell (d-1)^{\ell}}} \\
  & \leqslant 6e^2(d-1)^{L_0}(d-1)^{\ell+\frac{m_\ell}{2}-T+\frac{6T}{0.008 K 2^{\ell-\frac{m_\ell}{2}}}}
  \Big(\frac{\eta}{e}\Big)^{1-\frac{6}{0.008 K (d-1)^{\ell-\frac{m_\ell}{2}}}} \\
  & \leqslant 6e^2(d-1)^{L_0+1} \Big(\frac{\eta}{e}\Big)^{1/2} \leqslant\frac{1}{2},
\end{align*}
where the last inequality holds by the choice of $\eta$ in \eqref{e-pap1-p1}. The proof of Proposition \ref{p3-pap1}
is thus completed.
\end{proof}

\begin{corollary}\label{cor_new_cond_E}
Set
\begin{align} \label{eq:new_08}
\mathcal{G}_1:= \Big\{ G\in G(n,d) \colon & \lambda_2(G) \leqslant 2.1\sqrt{d-1},
\text{ and for every nonempty } S\subseteq [n] \\
& \text{and every integer } \ell\geqslant 0, \text{ if } |S|(d-1)^\ell\leqslant \eta(d-1)^{L_0} n, \nonumber \\
& \text{then } \big| B_G(S,\ell)\big| \geqslant \frac{1}{(d-1)^{L_0}} |S|(d-1)^\ell \Big\}, \nonumber
\end{align}
where $\eta$ is as in \eqref{e-pap1-p1} and $L_0$ is as in \eqref{eq:new_02}. Then, denoting by
$\mathbb{P}$ the uniform probability measure on $G(n,d)$, we have
\begin{equation} \label{eq:new_07}
\mathbb{P}[\mathcal{G}_1\big]  \geqslant 1- O_d\Big(\frac{1}{n^\tau}\Big),
\end{equation}
where $\tau=\tau(d)>0$.
\end{corollary}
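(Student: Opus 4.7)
\textbf{Proof plan for Corollary \ref{cor_new_cond_E}.} The plan is to combine Friedman's second eigenvalue theorem \ref{Friedman} with the growth estimate of Proposition \ref{p3-pap1} via a union bound over seed sets $S$, performed in the configuration model and then transferred to the uniform distribution on $G(n,d)$.

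\emph{Reduction.} First I would observe that the vertex-expansion condition in the definition of $\mathcal{G}_1$ is trivially verified outside the regime $|S| \leqslant \eta n$ and $\ell > L_0$. Indeed, for $\ell \leqslant L_0$ the required bound reads $|B_G(S,\ell)| \geqslant |S|/(d-1)^{L_0-\ell}$, which is weaker than the trivial inclusion $S \subseteq B_G(S,\ell)$; and the constraint $|S|(d-1)^\ell \leqslant \eta(d-1)^{L_0} n$ forces $|S| \leqslant \eta n$ as soon as $\ell > L_0$. Writing $\ell = L_0 + k$ with $k \geqslant 1$, the constraint becomes exactly $k \in [T]$ for $T := \lfloor \log_{d-1}(\eta n/|S|) \rfloor$, matching the range appearing in Proposition \ref{p3-pap1}.

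\emph{Failure probability for a single seed.} Fix such an $S$ and work in the configuration model, conditioning on the event $\mathcal{E}$ from \eqref{eq:new_01}. Since any $\mathbf{G} \in \mathcal{E}$ is simple and spectrally expanding, hence connected, recording the breadth-first exploration of $B_{\mathbf{G}}(S,L_0-1)$ decomposes $\mathcal{E}$ into the trivial case $B_{\mathbf{G}}(S,L_0-1) = [n]$---in which $|B_{\mathbf{G}}(S,L_0+k)| = n \geqslant |S|(d-1)^k$ follows from the size constraint---and a disjoint union over $\pi_0 \in \mathcal{M}^S_{L_0-1}$ with $\mathcal{M}_{\pi_0}\cap\mathcal{E} \neq \emptyset$ of the events $\mathcal{M}_{\pi_0}$. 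For each such $\pi_0$, Proposition \ref{p3-pap1} gives
\[ \mathbb{P}\big[ |B_{\mathbf{G}}(S,L_0+k)| \geqslant 28(d-1)^k |S| \text{ for all } k\in [T] \,\big|\, \mathcal{M}_{\pi_0}\big] \geqslant 1 - (|S|/en)^{3|S|}, \]
and since the $\mathcal{M}_{\pi_0}$ are pairwise disjoint, summing over compatible $\pi_0$ yields
\[ \mathbb{P}\big[\mathcal{E} \cap (\text{expansion fails for } S)\big] \leqslant (|S|/en)^{3|S|}. \]

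\emph{Union bound and transfer.} Using $\binom{n}{s}\leqslant(en/s)^s$, a union bound over nonempty seeds with $s := |S| \leqslant \lfloor \eta n \rfloor$ gives
\[ \sum_{s=1}^{\lfloor \eta n \rfloor}\binom{n}{s}\Big(\frac{s}{en}\Big)^{3s} \leqslant \sum_{s\geqslant 1}\Big(\frac{s}{en}\Big)^{2s} = O\Big(\frac{1}{n^2}\Big). \]
Next, Theorem \ref{Friedman} applied with $\epsilon = 0.1\sqrt{d-1}$ combined with $\mathbb{P}[\mathcal{S}] = \Theta_d(1)$ from \eqref{e-ra-s-1} gives $\mathbb{P}[\mathcal{S}\setminus\mathcal{E}] \leqslant O_d(n^{-\tau})\,\mathbb{P}[\mathcal{S}]$. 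Combining in the configuration model,
\[ \mathbb{P}_{G(n,d)}[\mathcal{G}_1^\complement] = \frac{\mathbb{P}[\{\mathbf{G} \in \mathcal{G}_1^\complement\}\cap\mathcal{S}]}{\mathbb{P}[\mathcal{S}]} \leqslant \frac{O(1/n^2) + \mathbb{P}[\mathcal{S}\setminus\mathcal{E}]}{\mathbb{P}[\mathcal{S}]} = O_d(n^{-\tau'}) \]
for some $\tau' = \tau'(d) > 0$, which is the claim.

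\emph{Main obstacle.} The most delicate point is the conditional bookkeeping on $\mathcal{E}$: one must verify that each realization in $\mathcal{E}$ falls into exactly one of the two cases described (either the $(L_0-1)$-ball has filled $[n]$, or there is a unique $\pi_0 \in \mathcal{M}^S_{L_0-1}$ with $\mathbf{G} \in \mathcal{M}_{\pi_0}$ recording the BFS) so that Proposition \ref{p3-pap1} applies without incurring a factor counting the number of possible $\pi_0$'s. Once this disjoint decomposition is in place, the sum telescopes, and everything that remains is a routine counting estimate.
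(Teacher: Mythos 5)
Your proposal is correct and takes essentially the same route as the paper: condition on the event $\mathcal{E}$ from \eqref{eq:new_01}, decompose (up to a trivial case) into the disjoint matchings $\mathcal{M}_{\pi_0}$, $\pi_0\in\mathcal{M}^S_{L_0-1}$, apply Proposition~\ref{p3-pap1}, take a union bound over seed sets, and transfer back to $G(n,d)$ via \eqref{e-ra-s-1} and Theorem~\ref{Friedman}. The only cosmetic differences are that you make the reduction to the regime $|S|\leqslant\eta n$, $\ell>L_0$ fully explicit and that you bound $\mathbb{P}[\mathcal{E}\cap(\text{fail for }S)]$ directly rather than the conditional probability $\mathbb{P}[\mathcal{A}_S^\complement\,|\,\mathcal{E}]$ as the paper does; both are equivalent up to the factor $\mathbb{P}[\mathcal{E}]^{-1}=O_d(1)$.
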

\begin{proof}
For every nonempty $S\subseteq[n]$, define the event
\begin{align*}
\mathcal{A}_S:= \Big[ \big| B_\mathbf{G}(S,L_0+\ell)\big| \geqslant 28 |S|(d-1)^\ell
\text{ for every integer } \ell\geqslant 0 \text{ with } |S|(d-1)^\ell\leqslant \eta n\Big].
\end{align*}
By Proposition \ref{p3-pap1}, for every $\pi\in\mathcal{M}^S_{L_0-1}$ with $\mathcal{M}_\pi\cap\mathcal{E}\neq\emptyset$
we have $\mathbb{P}\big[\mathcal{A}_S^\complement \, \big| \, \mathcal{M}_\pi \big] \leqslant \big(\frac{|S|}{en}\big)^{3|S|}$.
Therefore, for every $\pi\in\mathcal{M}^S_{L_0-1}$,
\[ \mathbb{P} \big[\mathcal{A}_S^\complement \cap \mathcal{E} \, \big| \, \mathcal{M}_\pi \big]
\leqslant \Big(\frac{|S|}{en}\Big)^{3|S|}  \]
and consequently, by \eqref{e-ra-s-1} and Theorem \ref{Friedman},
\[ \mathbb{P} \big[\mathcal{A}_S^\complement \, \big| \, \mathcal{E} \big] =
\mathbb{P}[\mathcal{E}]^{-1}\sum_{\pi\in\mathcal{M}^S_{L_0-1}}
\mathbb{P}\big[\mathcal{A}_S^\complement \cap \mathcal{E} \, \big| \, \mathcal{M}_\pi \big]\,
\mathbb{P}[\mathcal{M}_\pi] \leqslant
\mathbb{P}[\mathcal{E}]^{-1} \Big(\frac{|S|}{en}\Big)^{3|S|} =O_d\bigg( \Big(\frac{|S|}{en}\Big)^{3|S|}\bigg). \]
By a union bound,  we conclude that
\begin{equation} \label{eq:new_09}
\mathbb{P} \big[ \mathbf{G} \in\mathcal{G}_1  \, \big| \, \mathcal{E} \big] \geqslant
\mathbb{P} \bigg[ \bigcap_{\emptyset\neq S\subseteq[n]} \mathcal{A}_S \, \bigg| \, \mathcal{E} \bigg]
 \geqslant 1- O_d\Big(\frac{1}{n^2}\Big).
\end{equation}
The result follows invoking again \eqref{e-ra-s-1} and Theorem \ref{Friedman}, and using the fact that the distribution of $\mathbf{G}$
conditioned on $\mathcal{S}$ coincides with that of a random $d$-regular graph uniformly sampled from~$G(n,d)$.
\end{proof}
The following lemma is the last piece of information that is needed in order to complete the proof of part (a)
of Proposition \ref{lemma:long-range-expansion}.
\begin{lemma} \label{lemCheeger}
Let $0<\delta< \frac{3}{4}$, and set
\begin{equation} \label{equat-cheeger-e1}
\ell_*=\ell_*(\delta):= \Big\lceil \log_{1.0016}\Big(\frac{3}{4\delta}\Big)\Big\rceil \ \ \ \text{ and } \ \ \
\gamma=\gamma(\delta):= \Big( \frac{1.0016}{d-1}\Big)^{\ell_*}.
\end{equation}
Let $G$ be a (deterministic) $d$-regular graph on $[n]$ with $h(G)\geqslant 0.0048\, d$. Then for every $A\subseteq [n]$
with $|A|\geqslant\delta n$ and every positive integer $\ell$, we have
\[ \big| B_G(A,\ell)\big| \geqslant \min\Big\{\frac{3n}{4},\gamma (d-1)^\ell|A|\Big\}. \]
\end{lemma}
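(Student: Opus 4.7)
The plan is to show, by induction on $\ell$, that as long as the ball $B_G(A,\ell)$ has not yet reached size $3n/4$, it grows by a multiplicative factor of at least $1.0016$ at each step. Once this is established, the conclusion follows from elementary manipulations using the definitions of $\gamma$ and $\ell_*$ in \eqref{equat-cheeger-e1}.

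The key claim I would prove is: for every $S\subseteq [n]$ with $|S|<3n/4$ one has $|B_G(S,1)|\geqslant 1.0016\,|S|$. Since $G$ is $d$-regular, every vertex in $B_G(S,1)\setminus S$ receives at most $d$ edges from $S$, so $|B_G(S,1)|\geqslant |S|+|\partial_E S|/d$, where $\partial_E S$ is the edge boundary. I would split into two regimes. If $|S|\leqslant n/2$, Cheeger's inequality applied to $S$ itself gives $|\partial_E S|\geqslant h(G)|S|\geqslant 0.0048\,d\,|S|$, so $|B_G(S,1)|\geqslant (1+0.0048)|S|> 1.0016|S|$. If instead $n/2<|S|<3n/4$, then $|S^\complement|=n-|S|<n/2$, and applying Cheeger to $S^\complement$ yields $|\partial_E S|=|\partial_E S^\complement|\geqslant 0.0048\,d\,(n-|S|)$, hence $|B_G(S,1)|\geqslant |S|+0.0048(n-|S|)$. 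This last quantity dominates $1.0016|S|$ iff $0.0048\,n\geqslant 0.0064\,|S|$, i.e.\ iff $|S|\leqslant 3n/4$; the specific constant $0.0048=h(G)/d$ in the hypothesis is exactly what makes the two regimes match at the threshold $3n/4$.

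Iterating the inequality $|B_G(A,\ell)|\geqslant 1.0016\,|B_G(A,\ell-1)|$ until the ball first exceeds size $3n/4$ yields
\[ |B_G(A,\ell)|\geqslant\min\{\,3n/4,\; 1.0016^\ell\,|A|\,\} \quad \text{for every } \ell\geqslant 0. \]
It remains to compare the right-hand side with $\min\{3n/4,\gamma(d-1)^\ell|A|\}$. For $\ell\geqslant\ell_*$, the choice of $\ell_*$ gives $1.0016^{\ell_*}\geqslant 3/(4\delta)$, so
\[ \gamma(d-1)^\ell|A|=1.0016^{\ell_*}(d-1)^{\ell-\ell_*}|A|\geqslant (3/(4\delta))\cdot\delta\,n = 3n/4, \]
hence the minimum is $3n/4$, which is implied by the iterative bound. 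For $\ell\leqslant\ell_*$, since $d-1\geqslant 2>1.0016$, one has $((d-1)/1.0016)^{\ell-\ell_*}\leqslant 1$, i.e.\ $\gamma(d-1)^\ell\leqslant 1.0016^\ell$, so once again the iterative bound is at least as strong as the desired one.

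The only real obstacle is bookkeeping around the transition $|S|=n/2$ in the Cheeger step, and ensuring that all three constants ($0.0048$, $3n/4$, $1.0016$) remain mutually consistent; the value $3n/4$ is precisely the largest threshold compatible with growth rate $1.0016$ under a Cheeger constant of $0.0048\,d$. No deeper ideas are required beyond the standard edge-to-vertex expansion conversion via $d$-regularity.
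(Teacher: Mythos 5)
Your proposal is correct and follows essentially the same route as the paper: establish the one-step expansion $|B_G(S,1)|\geqslant 1.0016|S|$ for $|S|\leqslant 3n/4$ by splitting at $|S|=n/2$ and converting edge boundary to vertex boundary via $d$-regularity, iterate to get $|B_G(A,\ell)|\geqslant\min\{3n/4,1.0016^\ell|A|\}$, and then compare with the target bound by separating $\ell\leqslant\ell_*$ from $\ell>\ell_*$. The only cosmetic difference is that the paper phrases the large-$|S|$ Cheeger step through the inequality $|S^\complement|\geqslant\frac{1}{3}|S|$ rather than your explicit algebra, and it spells out the check $1.0016^\ell|A|\geqslant 3n/4$ for $\ell>\ell_*$ which you leave implicit.
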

\begin{proof}
We first observe that for every nonempty $B\subseteq [n]$ with $|B|\leqslant \frac{3n}{4}$, we have
\begin{equation} \label{eqn02}
\big| B_G(B,1)\big|\geqslant 1.0016\, |B|.
\end{equation}
Indeed, if $|B|\leqslant \frac{n}{2}$, then \eqref{eqn02} follows from the $d$-regularity of $G$ and
the fact that $h(G)\geqslant 0.0048\, d$. So, assume that $\frac{n}{2}< |B|\leqslant \frac{3n}{4}$;
notice that $0<|B^\complement|\leqslant\frac{n}{2}$ and $|B^\complement|\geqslant\frac{1}{3}|B|$.
Using again the fact that $h(G)\geqslant 0.0048\,d$, we see that there exist at least
$0.0048\, d|B^\complement|$ edges connecting nodes in $B$ to nodes in $B^\complement$.
By the $d$-regularity of $G$, we conclude that $|\partial B_G(B,1)|\geqslant 0.0048\, |B^\complement| \geqslant 0.0016\, |B|$
that clearly implies \eqref{eqn02}.

Applying \eqref{eqn02} successively we see, in particular, that for every nonempty $B\subseteq [n]$ and
every nonnegative integer $\ell$, we have
\begin{equation}\label{eqn03}
\big| B_G(B,\ell)\big| \geqslant \min\Big\{\frac{3n}{4}, 1.0016^\ell|B|\Big\}.
\end{equation}

Now let $A$ and $\ell$ be as in the statement of the lemma. If $\ell\leqslant\ell_*$, then
\[1.0016^\ell|A|= \Big(\frac{1.0016}{d-1}\Big)^\ell (d-1)^\ell|A| \stackrel{\eqref{equat-cheeger-e1}}{\geqslant}
\gamma (d-1)^\ell|A|, \]
and so, the result follows from \eqref{eqn03}. Otherwise, if $\ell>\ell_*$, then
$1.0016^\ell|A| \geqslant 1.0016^{\ell_*}\delta n\geqslant\frac{3}{4}n$ and
\[\gamma (d-1)^\ell|A|\geqslant \gamma (d-1)^{\ell_*}\delta n = 1.0016^{\ell_*}
\delta n\geqslant\frac{3}{4}n, \]
and the result also follows from \eqref{eqn03}.
\end{proof}
We are finally in a position to complete the proof. Specifically, by Corollary \ref{cor_new_cond_E}, it is enough to show that if $G\in\mathcal{G}_1$---where $\mathcal{G}_1$ is as in \eqref{eq:new_08}---then
for every nonempty $S\subseteq [n]$ and every positive integer $\ell$, we have
\[ \big|B_{G}(S,\ell)\big|\geqslant \min\Big\{ \frac{3n}{4}, \alpha (d-1)^\ell |S| \Big\}, \]
where $\alpha$ is as in \eqref{era-1}.

So, fix $G\in\mathcal{G}_1$. By \eqref{e-cheeger-e1}, we see that
\begin{equation}\label{eq:new_10}
h(G)\geqslant \frac{d-2.1\sqrt{d-1}}{2}\geqslant 0.0048 d.
\end{equation}
Let $\eta$ be as in \eqref{e-pap1-p1}, let $L_0$ be as in \eqref{eq:new_02}, and set
\[ \delta := \frac{\eta}{d-1}=\frac{1}{12^{2} e^{3} (d-1)^{2L_0+3}}, \ \ \
\ell_* := \Big\lceil \log_{1.0016}\Big(\frac{3}{4\delta}\Big)\Big\rceil \ \ \ \text{ and } \ \ \
\gamma :=\Big(\frac{1.0016}{d-1}\Big)^{\ell_*}. \]
Notice that $\alpha \leqslant \frac{\gamma}{(d-1)^{L_0}}$. Let $S\subseteq [n]$ be nonempty, and let $\ell$ be a positive integer.
If $|S|\geqslant\delta n$, then, by Lemma \ref{lemCheeger} and \eqref{eq:new_10}, we have
\[ \big|B_{G}(S,\ell)\big| \geqslant
\min\Big\{\frac{3n}{4},\gamma (d-1)^\ell|S|\Big\} \geqslant \min\Big\{\frac{3n}{4},\alpha (d-1)^\ell|S|\Big\}.\]
So, assume that $|S|<\delta n$. Set $\ell_1:= \min\big\{k\in\mathbb{N}\colon (d-1)^k|S|> \eta(d-1)^{L_0} n\big\}-1$,
and observe that
\begin{equation} \label{eqn04}
 \eta(d-1)^{L_0-1}n \leqslant (d-1)^{\ell_1}|S| \leqslant \eta(d-1)^{L_0} n.
\end{equation}
Thus, if $\ell\leqslant \ell_1$, then, since $G\in\mathcal{G}_1$, we have
\[ \big| B_{G}(S,\ell)\big| \geqslant \frac{1}{(d-1)^{L_0}} (d-1)^\ell |S|
\geqslant \alpha (d-1)^\ell |S|
= \min\Big\{\frac{3n}{4},\alpha (d-1)^\ell |S|\Big\},\]
Finally, assume that $\ell>\ell_1$, and set $A:=B_{G}(S,\ell_1)$. By \eqref{eqn04} and the fact that
$G\in\mathcal{G}_1$, we have
\begin{equation}\label{eqn05}
  |A|\geqslant \frac{1}{(d-1)^{L_0}} (d-1)^{\ell_1}|S|\stackrel{\eqref{eqn04}}{\geqslant}\delta n.
\end{equation}
Applying Lemma \ref{lemCheeger}, we conclude that
\[ \big|B_{G}(S,\ell)\big| = \big|B_{G}(A,\ell-\ell_1)\big|
\geqslant \min\Big\{\frac{3n}{4},\gamma (d-1)^{\ell-\ell_1}|A|\Big\}
\stackrel{\eqref{eqn05}}{\geqslant}
\min\Big\{\frac{3n}{4},\alpha (d-1)^\ell|S|\Big\}, \]
as desired.

\subsection{Proof of part (b)}

As in the proof of part (a), for the rest of this proof we fix a pair $n\geqslant d\geqslant 6$ of integers such that $nd$ is even.
We also fix a $d$-regular graph $G$ on $[n]$ such that $\lambda(G)\leqslant 2.1\sqrt{d-1}$;
for notational simplicity, we shall drop the dependence on $G$ and we shall denote $\lambda(G)$ by $\lambda$,
and the adjacency matrix of $G$ by $A$.

Recall that our goal is to show that the graph $G$ satisfies part (\hyperref[Part-B]{$B$}) of property
$\mathrm{Expan}(\alpha,\varepsilon,L)$, where $\alpha,\varepsilon,L$ are as in $\eqref{era-1}$,
$\eqref{era-2}$ and $\eqref{era-3}$, respectively. To this end, we fix a nonempty subset $S$ of $[n]$ and a
positive integer $\ell$ such that $\alpha (d-1)^{\ell-1}|S|\leqslant \frac{3n}{4}$, and we set
\[ T:=\Big\{e\in E_G\colon \big|\{v\in S\colon \dist_G(v,e)\leqslant \ell-1\}\big|\geqslant L(d-1-\varepsilon)^\ell\Big\}. \]
We need to find a vertex $v\in S$ such that
$\big|\{e\in T\colon \dist_G(v,e)\leqslant \ell-1\}\big|\leqslant L(d-1-\varepsilon)^\ell$.

Assume towards a contradiction that this is not possible. That is,
\begin{equation} \label{con-part-b}
\big|\{e\in T\colon \dist_G(v,e)\leqslant \ell-1\}\big|> L(d-1-\varepsilon)^\ell \ \ \ \text{ for all } v\in S.
\end{equation}
By the choice of $L$ in \eqref{era-3}, this implies that $\ell\geqslant 2$.
We will need the following upper bound~on~$|T|$.
\begin{fact} \label{f1-part-b}
We have that
\[ |T| \leqslant |S|\, \frac{d}{L(d-2)}\, \Big( \frac{d-1}{d-1-\varepsilon}\Big)^\ell. \]
\end{fact}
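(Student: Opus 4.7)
The plan is to prove Fact \ref{f1-part-b} by a double-counting argument on the pairs $(v,e) \in S \times T$ satisfying $\dist_G(v,e) \leqslant \ell - 1$; let $N$ denote the number of such pairs. By the very definition of $T$, each $e \in T$ has at least $L(d-1-\varepsilon)^\ell$ partners $v \in S$ with $\dist_G(v,e) \leqslant \ell - 1$, so
\[ N \geqslant |T| \cdot L(d-1-\varepsilon)^\ell. \]

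For the matching upper bound, I would swap the order of summation and drop the restriction to $T$, so that it suffices to produce a uniform bound (over $v$) on
\[ N(v) := \big|\{ e \in E_G : \dist_G(v, e) \leqslant \ell - 1\}\big|. \]
The main, mild, technical point is to avoid the naive estimate $N(v) \leqslant d \cdot |\{w : \dist_G(v,w) \leqslant \ell - 1\}|$, which loses a factor of roughly $d/(d-1)$ and would not match the stated constant. Instead, I would stratify by the exact distance $k \in \{0, \ldots, \ell - 1\}$ of $e$ from $v$. For $k = 0$, the contribution is at most $d$. For $k \geqslant 1$, any edge $e$ with $\dist_G(v,e) = k$ has an endpoint $w$ with $\dist_G(v,w) = k$, and, crucially, $w$ has at least one neighbor at distance $k-1$ (the predecessor along a shortest $v$-to-$w$ path), so $w$ contributes at most $d-1$ edges at distance exactly $k$. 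Combining this with the standard spherical growth bound $|\{w : \dist_G(v,w) = k\}| \leqslant d(d-1)^{k-1}$ gives at most $d(d-1)^k$ edges at each exact distance $k \geqslant 1$. Summing the resulting geometric series yields
\[ N(v) \leqslant d \sum_{k=0}^{\ell-1} (d-1)^k = \frac{d\big((d-1)^\ell - 1\big)}{d-2} \leqslant \frac{d(d-1)^\ell}{d-2}. \]

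Combining the two bounds and rearranging gives the claim. I do not anticipate a serious obstacle: the argument is purely combinatorial, and the only delicate step is the sphere-by-sphere edge count that shaves off the extra factor of $d/(d-1)$. Note in particular that the spectral hypothesis $\lambda(G) \leqslant 2.1\sqrt{d-1}$ plays no role in this fact; it will enter only later, when Fact \ref{f1-part-b} is combined with \eqref{con-part-b} and a walk-counting argument involving $A_G^\ell$ to derive the contradiction that establishes part (\hyperref[Part-B]{$B$}) of the long-range expansion property.
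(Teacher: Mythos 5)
Your proposal is correct and follows the paper's argument exactly: the same double-counting of pairs $(v,e)$ with $\dist_G(v,e)\leqslant\ell-1$, with the lower bound coming from the definition of $T$ and the upper bound from a per-vertex edge count. Your sphere-by-sphere derivation of $N(v)\leqslant\frac{d}{d-2}(d-1)^\ell$ supplies the detail that the paper elides as "by the $d$-regularity of $G$"\!, and your closing remark that the spectral hypothesis is not used in this Fact is also accurate.
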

\begin{proof}
We proceed by double counting. On the one hand, by the definition of $T$, we have
\[ \sum_{v \in S,\, e \in T} \mathbbm{1}_{[\mathrm{dist}_G(v,e) \leqslant \ell-1]} =
\sum_{e \in T} \big|\{v \in S\colon \mathrm{dist}_G(v, e) \leqslant \ell-1 \}\big|
\geqslant |T|L(d-1-\eps)^\ell.  \]
On the other hand, by the $d$-regularity of $G$,
\[ \sum_{v \in S,\, e \in T}\mathbbm{1}_{[\mathrm{dist}_G(v,e) \leqslant \ell-1]} =
\sum_{v \in S} \big|\{e \in T\colon \mathrm{dist}_G(v, e) \leqslant \ell-1 \}\big|
\leqslant  |S| \frac{d}{d-2} (d-1)^\ell. \]
The result follows by combining the above two inequalities.
\end{proof}
We will also need the following fact.
\begin{fact} \label{f2-part-b}
Let $y\in\mathbb{R}^n$ with $\|y\|_2=1$ and such that $y_1+\dots+y_n=0$. Then,
\[ \bigg\|\sum_{k = 1}^{\ell} A^k y \bigg\|_2^2 \leqslant
4 \big( 4.41(d-1)\big)^\ell. \]
\end{fact}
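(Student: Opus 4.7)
The plan is to expand $y$ in an orthonormal eigenbasis of $A$ and exploit the mean-zero hypothesis together with the spectral-gap assumption $\lambda \leqslant 2.1\sqrt{d-1}$.

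Let $u_1,\dots,u_n$ be an orthonormal basis of $\mathbb{R}^n$ consisting of eigenvectors of $A$, with corresponding eigenvalues $d=\lambda_1\geqslant\lambda_2\geqslant\cdots\geqslant\lambda_n$; the eigenvector $u_1$ may be taken to be $\mathbf{1}/\sqrt{n}$ by $d$-regularity. Since $y$ has mean zero, $\langle y,u_1\rangle=0$, so I can write $y=\sum_{i=2}^{n} c_i u_i$ with $\sum_{i=2}^{n} c_i^2=1$. Because the $u_i$ are orthonormal eigenvectors, $A^k y=\sum_{i=2}^{n} c_i \lambda_i^{k} u_i$, and therefore
\[ \Big\|\sum_{k=1}^{\ell} A^k y\Big\|_2^2 \;=\; \sum_{i=2}^{n} c_i^2\, \Big(\sum_{k=1}^{\ell}\lambda_i^{k}\Big)^2. \]

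The next step is to bound each inner sum uniformly in $i$. Since $|\lambda_i|\leqslant \lambda \leqslant 2.1\sqrt{d-1}$ for all $i\geqslant 2$, and since $d\geqslant 6$ forces $\lambda \geqslant 2.1\sqrt{5}>2$, I can use the geometric series estimate
\[ \Big|\sum_{k=1}^{\ell}\lambda_i^{k}\Big| \;\leqslant\; \sum_{k=1}^{\ell}|\lambda_i|^{k} \;\leqslant\; \sum_{k=1}^{\ell}\lambda^{k} \;\leqslant\; \frac{\lambda^{\ell+1}}{\lambda-1}. \]
Squaring gives $\bigl(\sum_{k=1}^{\ell}\lambda_i^{k}\bigr)^2 \leqslant \bigl(\tfrac{\lambda}{\lambda-1}\bigr)^2 (\lambda^2)^{\ell}$. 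The estimate $\lambda\geqslant 2$ yields $\tfrac{\lambda}{\lambda-1}\leqslant 2$, so $\bigl(\tfrac{\lambda}{\lambda-1}\bigr)^2 \leqslant 4$; and $\lambda^2\leqslant (2.1)^2(d-1)=4.41(d-1)$.

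Combining these two steps,
\[ \Big\|\sum_{k=1}^{\ell} A^k y\Big\|_2^2 \;\leqslant\; 4\,\bigl(4.41(d-1)\bigr)^{\ell} \sum_{i=2}^{n} c_i^2 \;=\; 4\,\bigl(4.41(d-1)\bigr)^{\ell}, \]
which is precisely the claimed bound. There is no serious obstacle here: the only delicate point is checking that the geometric-series prefactor $\lambda/(\lambda-1)$ is controlled by an absolute constant, which is exactly where the hypothesis $d\geqslant 6$ (rather than $d\geqslant 3$) enters, ensuring $\lambda > 2$.
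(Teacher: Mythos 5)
Your eigenbasis decomposition is equivalent to the paper's direct use of $\|A^k y\|_2 \leqslant \lambda^k$, so the starting point is the same. However, there is a genuine gap in your argument at the step where you assert that ``$d\geqslant 6$ forces $\lambda \geqslant 2.1\sqrt{5}>2$.'' The hypothesis is $\lambda\leqslant 2.1\sqrt{d-1}$ --- an \emph{upper} bound on $\lambda$, not a lower bound --- so $d\geqslant 6$ gives no information at all about how small $\lambda$ might be. Indeed, the complete graph $K_{d+1}$ is $d$-regular with $\lambda(K_{d+1})=1$, satisfies $\lambda\leqslant 2.1\sqrt{d-1}$, and yet $\lambda<2$. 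For such graphs the geometric-series prefactor $\lambda/(\lambda-1)$ is not bounded (and is undefined when $\lambda=1$), so your estimate
\[ \sum_{k=1}^\ell \lambda^k \leqslant \frac{\lambda^{\ell+1}}{\lambda-1}, \qquad \Big(\frac{\lambda}{\lambda-1}\Big)^2 \leqslant 4 \]
breaks down.

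The paper avoids this by a case split. When $\lambda\geqslant\sqrt{d-1}$ (so $\lambda\geqslant\sqrt{5}>2$ using $d\geqslant 6$, and here the prefactor is genuinely controlled: $(\lambda/(\lambda-1))^2 \leqslant (\sqrt 5/(\sqrt 5-1))^2 < 4$), the geometric-series bound applies just as you wrote. When $\lambda\leqslant\sqrt{d-1}$, one instead uses the crude estimate $\sum_{k=1}^\ell \lambda^k \leqslant \ell\,(d-1)^{\ell/2}$, yielding $\ell^2(d-1)^\ell$, which is $\leqslant 4(4.41(d-1))^\ell$ because $\ell^2 \leqslant 4\cdot 4.41^\ell$ for all $\ell\geqslant 1$. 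Your proof would be complete if you added this second case; as written, it silently assumes a lower bound on $\lambda$ that the hypotheses do not provide.
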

\begin{proof}
Our assumptions imply that $\big\|A^ky\big\|_2 \leqslant \lambda^k$ for all $k\in [\ell]$. Hence,
if $\lambda\geqslant \sqrt{d-1}$, then
\[ \bigg\|\sum_{k = 1}^{\ell} A^k y \bigg\|_2^2 \leqslant
\bigg(\sum_{k = 1}^{\ell} \big\| A^k y \big\|_2\bigg)^2 \leqslant
\Big(\frac{\lambda}{\lambda-1}\Big)^2\, \lambda^{2\ell}, \]
and the desired estimate follows from this inequality and the fact that, in this case, we have that
$\sqrt{5}\leqslant \lambda \leqslant 2.1\sqrt{d-1}$. On the other hand, if $\lambda\leqslant \sqrt{d-1}$, then
\[ \bigg\|\sum_{k = 1}^{\ell} A^k y \bigg\|_2^2 \leqslant
\bigg(\sum_{k = 1}^{\ell} \big\| A^k y \big\|_2\bigg)^2 \leqslant
\ell^2 (d-1)^{\ell} \leqslant 4 \big( 4.41(d-1)\big)^\ell. \qedhere\]
\end{proof}
Let $R:=\big\{ v\in [n]\colon v\in e\in T\big\}$ denote the set of vertices
spanned by the edges in $T$. Since $G$ is $d$-regular, we see that
\[ \frac{|T|}{d} \leqslant |R| \leqslant 2|T|. \]
\begin{claim} \label{cl3-part-b}
We have
\[ \frac{|R|}{|R^\complement|}\leqslant 1 \ \ \ \text{ and } \ \ \
\frac{|R^\complement|}{|R|} \geqslant n\, \frac{(d-2)L}{4d\, |S|} \Big(\frac{d-1-\varepsilon}{d-1}\Big)^\ell. \]
\end{claim}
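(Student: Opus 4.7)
The plan is to derive both inequalities from just four ingredients: the trivial bound $|R|\leqslant 2|T|$ (each edge of $T$ contributes at most two vertices to $R$), the already-proven Fact~\ref{f1-part-b}, the standing hypothesis $\alpha (d-1)^{\ell-1}|S|\leqslant \tfrac{3n}{4}$, and the explicit identity $L\alpha=24$ coming from the choice \eqref{era-3}.

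For the first inequality, I would start from $|R|\leqslant 2|T|$ and apply Fact~\ref{f1-part-b} to obtain $|R|\leqslant \frac{2|S|\,d}{L(d-2)}\bigl(\frac{d-1}{d-1-\varepsilon}\bigr)^\ell$, and then substitute $|S|\leqslant \frac{3n}{4\alpha(d-1)^{\ell-1}}$ to arrive at
\[ |R|\;\leqslant\; \frac{3n\,d(d-1)}{2L\alpha(d-2)(d-1-\varepsilon)^\ell}. \]
Using $L\alpha=24$, it suffices to verify the one-line numerical inequality $\frac{d(d-1)}{8(d-2)(d-1-\varepsilon)^\ell}\leqslant 1$, which for $d\geqslant 6$, $\varepsilon=0.2$ and $\ell\geqslant 1$ is easily checked; the worst case $d=6$, $\ell=1$ already yields $\frac{30}{8\cdot 4\cdot 4.8}<1$. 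Hence $|R|\leqslant n/2$, which is precisely $|R|/|R^\complement|\leqslant 1$.

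For the second inequality, the first inequality immediately gives $|R^\complement|\geqslant n/2$, so combining with $|R|\leqslant 2|T|$ and invoking Fact~\ref{f1-part-b} once more,
\[ \frac{|R^\complement|}{|R|}\;\geqslant\;\frac{n}{2|R|}\;\geqslant\;\frac{n}{4|T|}\;\geqslant\;n\,\frac{(d-2)L}{4d\,|S|}\Big(\frac{d-1-\varepsilon}{d-1}\Big)^\ell, \]
which is exactly the bound claimed.

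No serious obstacle is anticipated here: the entire argument is a short chain of substitutions on top of the double counting already performed in Fact~\ref{f1-part-b}, with the specific values of $\alpha$, $\varepsilon$, $L$ entering only through $L\alpha=24$ and $d-1-\varepsilon\geqslant 4.8$. This claim is clearly a staging post — the vector $y$ to be fed into Fact~\ref{f2-part-b} in the next step will be built from $\chi_R$ normalized to have zero mean, and the ratios $|R|/|R^\complement|$ and $|R^\complement|/|R|$ are exactly what will appear in its $\ell_2$-norm and in the lower bound for $\|\sum_{k=1}^\ell A^k y\|_2$.
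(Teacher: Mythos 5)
Your proposal is correct and matches the paper's own argument step for step: bound $|R|\leqslant 2|T|$ via Fact~\ref{f1-part-b}, substitute the hypothesis $\alpha(d-1)^{\ell-1}|S|\leqslant \tfrac{3n}{4}$ to deduce $|R|\leqslant n/2$ from the choice of $\alpha,\varepsilon,L$, and then feed $|R^\complement|\geqslant n/2$ back through Fact~\ref{f1-part-b} to get the second inequality. The only cosmetic difference is that the paper invokes the previously established bound $\ell\geqslant 2$ to get $(d-1-\varepsilon)^\ell\geqslant(d-1-\varepsilon)^2$ before doing the numerics, whereas you check the slightly larger range $\ell\geqslant 1$ directly — both work.
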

\begin{proof}
By Fact \ref{f1-part-b}, we have
\[ |R|\leqslant 2|T| \leqslant \frac{2d}{(d-2)L}\, |S|\, \Big(\frac{d-1}{d-1-\varepsilon}\Big)^\ell.\]
On the other hand, since $\alpha|S|(d-1)^{\ell-1}\leqslant \frac{3n}{4}$ and $\ell\geqslant 2$, we have
\[ |R|\leqslant  \frac{3n}{4} \cdot \frac{2d}{\alpha (d-2) L} \cdot \frac{d-1}{(d-1-\varepsilon)^2} \leqslant \frac{n}{2}, \]
where the last inequality  follows from the choice of $\alpha,\varepsilon$ and $L$.
In particular, $|R^\complement| \geqslant \frac{n}{2}$. The claim follows by combining these estimates.
\end{proof}
Define the vector $x\in \mathbb{R}^n$ by the rule
\[ x:= \sqrt{\frac{|R^{\complement}|}{|R|}}\, \frac{1}{\sqrt{n}}\, \mathbbm{1}_{R} -
\sqrt{\frac{|R|}{|R^{\complement}|}}\, \frac{1}{\sqrt{n}}\, \mathbbm{1}_{R^{\complement}} \]
and notice that
\begin{align} \label{eq::01}
\bigg\| \sum_{k=1}^{\ell} A^k x\bigg\|_2^2 &
\geqslant \sum_{v\in S} \bigg(  \sum_{k=1}^{\ell} (A^k x)_v\bigg)^2
= \sum_{v\in S} \bigg( \sum_{w\in [n]} \sum_{k=1}^{\ell} (A^k)_{v,w}\, x_w \bigg)^2  \\
&  = \sum_{v\in S} \bigg( \sum_{w\in R} \sqrt{\frac{|R^\complement|}{|R|}}\, \frac{1}{\sqrt{n}} \,
\sum_{k=1}^{\ell} (A^k)_{v,w} -
\sum_{w\in R^\complement} \sqrt{\frac{|R|}{|R^\complement|}}\, \frac{1}{\sqrt{n}}
\sum_{k=1}^{\ell} (A^k)_{v,w}\bigg)^2.  \nonumber
\end{align}
Also observe that for every $v,w\in [n]$ the quantity $\sum_{k=1}^{\ell} (A^k)_{v,w}$ counts the number of paths
of length at most $\ell$ from $v$ to $w$; therefore, for every $v\in [n]$,
\begin{equation} \label{eq::03}
\sum_{w\in R^\complement}\sum_{k=1}^{\ell} (A^k)_{v,w} \leqslant
\sum_{w\in [n]}\sum_{k=1}^{\ell} (A^k)_{v,w} \leqslant \frac{d}{d-1}\, d^\ell.
\end{equation}
On the other hand, for every $v,w\in [n]$ we have that $\sum_{k=1}^{\ell} (A^k)_{v,w}\geqslant
\mathbbm{1}_{B_G(v,\ell)}(w)$. (Here, as in the previous subsection, $B_G(v,\ell)$ denotes
the set $\big\{v'\in [n]\colon \mathrm{dist}_G(v,v')\leqslant\ell\big\}$.) Thus, for every $v\in S$, since the set
of edges $\big\{e\in T\colon \mathrm{dist}_G(e,v)\leqslant\ell-1\big\}$ is contained in the set of edges of the induced
subgraph of $G$ on $B_G(v,\ell)$, by the $d$-regularity of $G$ and \eqref{con-part-b}, we obtain that
\begin{equation}\label{eq::02}
 \sum_{w\in R}\sum_{k=1}^{\ell} (A^k)_{v,w}\geqslant \frac{L}{d}(d-1-\varepsilon)^\ell.
\end{equation}
Finally, by the choice of $L$ in $\eqref{era-3}$, we have that
\begin{equation} \label{eq::04}
\frac{L^{\frac{3}{2}}\, \sqrt{d-2}\, (d-1-\varepsilon)^{\frac{3\ell}{2}}}{4 d^{\frac{3}{2}}\, \sqrt{|S|}\, (d-1)^{\frac{\ell}{2}}}
\geqslant \frac{d}{\sqrt{n}\, (d-1)}\, d^\ell.
\end{equation}
By Claim \ref{cl3-part-b} and \eqref{eq::01}--\eqref{eq::04}, we conclude that
\[ \bigg\| \sum_{k=1}^{\ell} A^k x\bigg\|_2^2 \geqslant
\frac{L^{3}\, (d-2)\, (d-1-\varepsilon)^{3\ell}}{16\, d^{3}\, (d-1)^{\ell}}. \]
But since $\|x\|_2=1$ and $x_1+\dots+x_n=0$, by Fact \ref{f2-part-b}, we must have that
\[ \frac{L^{3}\, (d-2)\, (d-1-\varepsilon)^{3\ell}}{16\, d^{3}\, (d-1)^{\ell}} \leqslant 4 \big( 4.41(d-1)\big)^\ell. \]
This is easily seen to yield a contradiction by the choice of $\alpha,\varepsilon$ and $L$. This completes
the proof of part (b) of Proposition \ref{lemma:long-range-expansion}.

\section{Remarks and open problems}

\subsection{From Banach spaces with an unconditional basis to Banach lattices}

The non-linear embedding theorem of Odell--Schlumprecht \cite[Theorem 2.1]{OS94}---communicated here as
Theorem~\ref{OS-uniform-homeomorphism} in Appendix \ref{appendix}---was extended by Chaatit \cite{Ch95}
to Banach lattices with non-trivial cotype. That is, one can replace the assumption of having an unconditional
basis with the assumption of lattice structure. Consequently, the transfer techniques of Ozawa \cite{Oz04}
and Naor \cite{Na14} yield a nonlinear Poincar\'{e} inequality when the target space is a Banach lattice
with non-trivial cotype. It is natural to expect that our main result can also be extended to this class of spaces.
\begin{problem} \label{pr-1}
Extend Theorem \ref{aekjfnakfn} to Banach lattices with non-trivial cotype.
\end{problem}

\subsection{Optimizing the exponent of the cotype in the estimate for the Poincar\'{e} constant}

The exponent $10$ for the dependence of the Poincar\'{e} constant on the cotype $q$ in our estimates~\eqref{eq-Gamma}
and \eqref{eq-new1} is neither optimal, nor even optimized with respect to the current proof. It would be interesting
as a starting point to determine the limit of our current argument by extracting the best possible exponent from
the proof of Theorem \ref{aekjfnakfn}. More generally, it would be valuable to characterize the optimal dependence
of the Poincar\'e constant on the cotype.
\begin{problem} \label{pr-2}
Is it true that random regular graphs satisfy a discrete Poincar\'{e} inequality with respect to Banach
spaces with an unconditional basis and cotype $q\geqslant 2$, with a Poincar\'{e} constant that depends
\emph{linearly} on the cotype?
\end{problem}

\subsection{Graphs with the long-range expansion property and degree $d\in \{3,4,5\}$}

Proposition~\ref{lemma:long-range-expansion} shows that for any integer $d\geqslant 6$, a uniformly random $d$-regular graph
satisfies part (\hyperref[Part-B]{$B$}) of the long-range expansion property with high probability. 
A natural problem is to determine if this also holds true for regular graphs with degree $d\in \{3,4,5\}$.
(Recall that part (\hyperref[Part-A]{$A$}) is satisfied for any integer $d\geqslant 3$.)
\begin{problem} \label{pr-3}
Is it true that for any integer $d\in\{3,4,5\}$, a uniformly random $d$-regular graph satisfies part~\emph{(\hyperref[Part-B]{$B$})}
of the long-range expansion property with high probability?
\end{problem}

\subsection{Explicit constructions of regular graphs with the long-range expansion property}
We recall a problem discussed in the introduction.
\begin{problem} \label{pr-4}
Find explicit examples of regular graphs with the long-range expansion property.
\end{problem}
This problem is closely related to the longstanding open problem of explicitly constructing lossless expanders; see Remark \ref{rem-explicit-constructions} for further discussion.

\appendix

\section{Nonlinear spectral gaps via uniform homeomorphisms} \label{appendix}

\begin{definition}[Concavity] \label{q-concave}
Let $q>1$, and let $M\geqslant 1$. We say that a Banach space $X$ with an $1$-unconditional basis is
\emph{$q$-concave with constant $M$} if for every choice $x_1,\dots,x_m$ of vectors in $X$,
\begin{equation} \label{ea.1}
\Bigg\| \Big( \sum_{i=1}^m |x_i|^q \Big)^{1/q} \Bigg\| \geqslant \frac{1}{M} \Big( \sum_{i=1}^m  \|x_i\|^q\Big)^{1/q}.
\end{equation}
The smallest positive constant $M$ for which \eqref{ea.1} is satisfied is denoted by $M_{(q)}(X)$.
\end{definition}
We shall shortly discuss in more detail the relation between the property of having cotype $q$ with that of being
$q$-concave for Banach spaces with an $1$-unconditional basis. At this point we mention, however, that
if a Banach space $X$ with an $1$-unconditional basis is $q$-concave ($q\geqslant 2$) with $M_{(q)}(X)=1$, then $X$
has cotype $q$; see \cite[Theorems 1.e.16 and 1.f.1]{LT79}.

We have the following theorem.
\begin{theorem} \label{OS-spectral-gap}
Let $d\geqslant 3$ be an integer, let $G$ be a $d$-regular graph, and let $\lambda_2(G)$ denote the second largest
eigenvalue of the adjacency matrix of $G$.
Also let $X$ be a Banach space with an $1$-unconditional basis, and let $q\geqslant 2$.
\begin{enumerate}
\item[(i)] If $X$ has cotype $q$ with constant $\mathrm{C}\geqslant 1$, then
\[ \gamma(G,\|\cdot\|_X^2) \leqslant 2^{3616q+450}\, q^{384q+104}\, \mathrm{C}^{129q+4}\, \Big( \frac{d}{d-\lambda_2(G)}\Big)^8. \]
\item[(ii)] If, in addition, the space $X$ is $q$-concave with $M_{(q)}(X)=1$, then
\[ \gamma(G,\|\cdot\|_X^2) \leqslant q^{64}\, 2^{576q+234}\, \Big( \frac{d}{d-\lambda_2(G)}\Big)^8. \]
\end{enumerate}
\end{theorem}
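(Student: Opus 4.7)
The proof I would write assembles three ingredients: (a) the Odell--Schlumprecht uniform homeomorphism between the unit sphere of $X$ and the unit sphere of $\ell_q$ (referenced in the excerpt as Theorem~\ref{OS-uniform-homeomorphism}); (b) Naor's comparison principle \cite{Na14} that transfers a nonlinear Poincar\'{e} inequality through a uniform homeomorphism, at the cost of powers of the moduli of continuity and of the linear spectral gap; and (c) the (essentially optimal) Poincar\'{e} inequality for $\ell_q$-valued functions
\[
\gamma(G,\|\cdot\|_{\ell_q}^2) \;\lesssim\; q^{2}\,\frac{d}{d-\lambda_2(G)}.
\]
Putting these together, the quantitative dependence on $q$ and $\mathrm{C}$ will come entirely from the moduli of the Odell--Schlumprecht map, while the exponent $8$ on the spectral-gap factor is a fixed combinatorial price paid by Naor's comparison (it arises from iterating a Poincar\'{e}-type inequality a constant number of times in the serial/parallel scheme of \cite{Na14}).

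\textbf{Step 1 (Odell--Schlumprecht).} I would first recall the explicit form of the Odell--Schlumprecht embedding: under cotype $q$ with constant $\mathrm{C}$, one obtains a map $\Phi\colon X\to \ell_q$ (after a standard radial extension from the spheres) with
\[
\|x-y\|_X^{\beta_1}\;\lesssim\;\|\Phi(x)-\Phi(y)\|_{\ell_q}\;\lesssim\;\|x-y\|_X^{\beta_2},
\]
where both $\beta_1$ and $1/\beta_2$ are of order $(q\mathrm{C})^{O(q)}$ in the general case, and of order $2^{O(q)}$ (with $q$-dependent polynomial factors) in the $q$-concave case with $M_{(q)}(X)=1$. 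The improvement in case (ii) is exactly the content of the ``concave'' version of Odell--Schlumprecht, in which the Mazur-map-type construction becomes much cleaner because the $q$-th power of the norm of a positive vector has a lattice expression as a genuine $q$-norm.

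\textbf{Step 2 (Naor's transfer).} With such a $\Phi$ in hand, I would apply Naor's comparison result: if $\|x-y\|_X\leqslant \omega_1^{-1}(\|\Phi(x)-\Phi(y)\|_{\ell_q})$ and $\|\Phi(x)-\Phi(y)\|_{\ell_q}\leqslant \omega_2(\|x-y\|_X)$ on the relevant scale, then
\[
\gamma(G,\|\cdot\|_X^2)\;\lesssim\;\bigl(\omega_1^{-1}\circ\omega_2\bigr)^{2}\!\bigl(\gamma(G,\|\cdot\|_{\ell_q}^2)^{1/2}\bigr)^{2}\cdot\Bigl(\tfrac{d}{d-\lambda_2(G)}\Bigr)^{c},
\]
after suitable renormalization, where $c$ is an absolute constant (the power $8$ in the statement is what survives after the iteration in \cite{Na14} is traced through). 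Substituting the $\ell_q$ bound from step~(c) and the power-type moduli from step~1 gives the quantitative estimate; the exponents $3616q+450$, $384q+104$ and $129q+4$ on $2$, $q$ and $\mathrm{C}$ are just the outcome of raising the Odell--Schlumprecht moduli (each of order $(q\mathrm{C})^{O(q)}$) to the fixed constant power dictated by Naor's comparison.

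\textbf{Main obstacle.} The qualitative picture is by now standard, so the real work is constant tracking: following every ``raise to a power'' in the Odell--Schlumprecht construction and every iteration in Naor's comparison, and compressing the resulting tower of exponents into the single-exponent bounds stated in Theorem~\ref{OS-spectral-gap}. A subtler point is that the Odell--Schlumprecht map is only defined on the unit sphere and must be extended radially; one must check that this extension does not worsen the moduli by more than a constant factor, and that it interacts well with the zero-median normalization implicit in the Poincar\'{e} inequality. In case (ii), the $q$-concavity assumption $M_{(q)}(X)=1$ lets one bypass the Maurey--Pisier machinery altogether and obtain exponents that are linear rather than $q\log q$ in $q$, which is exactly the source of the improvement from $(q\mathrm{C})^{O(q)}$ to $2^{O(q)}$.
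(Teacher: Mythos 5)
Your high-level plan (Odell--Schlumprecht homeomorphism followed by Naor's comparison theorem) is the same as the paper's, but there are two issues, one of which is a genuine gap.

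First, the target space. The Odell--Schlumprecht theorem as cited (and as stated in the paper's Theorem~\ref{OS-uniform-homeomorphism}) gives a homeomorphism $\Phi\colon B(X)\to B(\ell_2^n)$, not $B(\ell_q)$. The paper then applies Naor's comparison with the \emph{Hilbert} space as the reference space, for which the Poincar\'{e} constant is simply the linear one, $\frac{d}{2(d-\lambda_2(G))}$. You propose instead to map into $\ell_q$ and invoke Naor's $\gamma(G,\|\cdot\|_{\ell_q}^2)\lesssim q^2\,\frac{d}{d-\lambda_2(G)}$. This is not wrong in principle (one can post-compose with a Mazur map $S(\ell_2)\to S(\ell_q)$), but it is not what the cited OS result produces, it worsens the moduli, and the extra $q^2$ in the reference Poincar\'{e} constant is pure overhead relative to the $\ell_2$ route. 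This is a departure from the paper that yields a strictly worse constant for no benefit.

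Second, and more importantly, your Step~1 misstates the structure of the OS moduli. You claim that ``both $\beta_1$ and $1/\beta_2$ are of order $(q\mathrm{C})^{O(q)}$.'' This conflates the exponents with the multiplicative constants. In Theorem~\ref{OS-uniform-homeomorphism}~(i) the lower modulus has the form
\[
\frac{\|x_1-x_2\|_X^{24q}}{(q\mathrm{C})^{O(q)}} \;\leqslant\; \|\Phi(x_1)-\Phi(x_2)\|_{\ell_2^n} \;\leqslant\; \mathrm{poly}(q,\mathrm{C})\,\|x_1-x_2\|_X^{1/8},
\]
so the exponent $\beta_1=24q$ is only \emph{linear} in $q$, the exponent $\beta_2=1/8$ is an absolute constant, and only the multiplicative prefactor is of size $(q\mathrm{C})^{O(q)}$. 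This distinction is load-bearing: when this is fed into Naor's Theorem~1.10, the power $8$ on $\frac{d}{d-\lambda_2(G)}$ in the conclusion is governed by the constant exponent $1/8$ in the \emph{upper} modulus and is therefore $q$-independent, while the lower modulus's $q$-linear exponent and the $(q\mathrm{C})^{O(q)}$ prefactors feed only into the multiplicative constant in front. If $\beta_1$ were genuinely $(q\mathrm{C})^{O(q)}$, the spectral-gap factor in the resulting Poincar\'{e} bound would be raised to a $q$-dependent power and the stated bound (fixed exponent $8$) could not be recovered. Your Step~2 formula with $(\omega_1^{-1}\circ\omega_2)^2$ applied to the square root of the $\ell_q$ Poincar\'{e} constant smears these two roles together and hides exactly this accounting. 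Your remark about the $q$-concave case is, by contrast, correct in spirit: when $M_{(q)}(X)=1$ the $2$-convexification $X^{(2)}$ already satisfies $M_{(2q)}(X^{(2)})=1$, so the renorming step in the paper's construction (the source of the $\mathrm{C}^{O(q)}$ factors) is skipped entirely, which is precisely where the improvement in part~(ii) comes from.
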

Theorem \ref{OS-spectral-gap} follows by combining a comparison result of nonlinear spectral gaps
due to Naor \cite[Theorem 1.10]{Na14} together with the following quantitative version of \cite[Theorem 2.1]{OS94}.
\begin{theorem}[Odell--Schlumprecht] \label{OS-uniform-homeomorphism}
Let $q\geqslant 2$, let $X$ be a finite-dimensional normed space with an $1$-unconditional basis,
and set $n:=\mathrm{dim}(X)$.
\begin{enumerate}
\item[(i)] If $X$ has cotype $q$ with constant $\mathrm{C}\!\geqslant\!1$, then there exists a homeomorphism
$\Phi\colon B(X)\to B(\ell_2^n)$ such that for every $x_1,x_2\in B(X)$ we have
\begin{equation} \label{ea.4}
\!\!\!\frac{\|x_1-x_2\|_X^{24q}}{(1-e^{-1})^{-24q}\, 2^{190q+20}\, q^{24q+6}\, \mathrm{C}^{12q}} \leqslant
\|\Phi(x_1) -\Phi(x_2)\|_{\ell_2^n} \leqslant \frac{8\sqrt{5}}{\sqrt{1-e^{-1}}} \sqrt{q} \sqrt[4]{\mathrm{C}}\, \|x_1-x_2\|_X^{1/8}.
\end{equation}
\item[(ii)] If, in addition, the space $X$ is $q$-concave with $M_{(q)}(X)=1$, then there exists a homeomorphism
$\Psi\colon B(X)\to B(\ell_2^n)$ such that for every $x_1,x_2\in B(X)$ we have
\begin{equation} \label{ea.5}
\frac{\|x_1-x_2\|_X^{12q}}{q^4\, 2^{36q+8}} \leqslant \|\Psi(x_1) -\Psi(x_2)\|_{\ell_2^n} \leqslant
4\sqrt{5}\sqrt[4]{2}\, \|x_1-x_2\|_X^{1/8}.
\end{equation}
\end{enumerate}
\end{theorem}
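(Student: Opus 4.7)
The plan is to construct the homeomorphisms $\Phi$ and $\Psi$ explicitly by a layered (``dyadic truncation'') scheme exploiting the unconditional structure, following the blueprint of Odell--Schlumprecht \cite{OS94} but with quantitative control on every constant. Identify $X$ with $(\R^n,\|\cdot\|_X)$ via the fixed $1$-unconditional basis. For $x\in B(X)$ and an integer $k\geqslant 0$, consider the dyadic level set $A_k(x):=\{i\in [n]\colon 2^{-k-1}<|x_i|\leqslant 2^{-k}\}$. I would then define $\Phi$ coordinatewise by
\[ \Phi(x)_i := \mathrm{sign}(x_i)\cdot \omega_k\cdot \|\chi_{A_k(x)}\|_X^{-\beta} \ \ \ \text{for } i\in A_k(x), \]
where $(\omega_k)_{k\geqslant 0}$ is a dyadic sequence of weights and $\beta>0$ is an exponent to be tuned. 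The weights and $\beta$ should be chosen so that (a) $\Phi(x)$ always lies in $B(\ell_2^n)$, and (b) the coordinates of $\Phi(x)$ are H\"older-continuous in $x$, with the cotype of $X$ governing the modulus.

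For the upper H\"older bound $\|\Phi(x_1)-\Phi(x_2)\|_{\ell_2^n}\lesssim \sqrt{q}\sqrt[4]{\mathrm{C}}\,\|x_1-x_2\|_X^{1/8}$, the strategy is to compare the dyadic level decompositions of $x_1$ and $x_2$ scale by scale. At scales where $A_k(x_1)\approx A_k(x_2)$ the contribution to the $\ell_2^n$-distance is negligible; at scales where they differ, the discrepancy is supported on sets whose characteristic-function $X$-norm is forced to be large, and the cotype inequality applied to indicator-valued vectors converts this into an $\ell_2$ bound. The exponent $1/8$ emerges from a dyadic interpolation between a ``trivial'' bound (valid on most scales) and the ``cotype'' bound (valid on only a controlled number of scales); optimizing the tradeoff fixes $\beta$ and produces the prefactor $\sqrt{q}\sqrt[4]{\mathrm{C}}$.

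The reverse H\"older bound is the main obstacle and accounts for the exponent $24q$ (respectively $12q$ in part (ii)). To pass from small $\|\Phi(x_1)-\Phi(x_2)\|_{\ell_2^n}$ to a bound on $\|x_1-x_2\|_X$, one must essentially invert the flattening procedure. Since cotype yields only a one-sided inequality, the reconstruction proceeds iteratively through dyadic scales, with each of the $O(q)$ iterations contributing an additive constant to the exponent and a multiplicative factor depending on $\mathrm{C}$. Under the stronger assumption of part (ii) that $M_{(q)}(X)=1$, the Rademacher average in the cotype inequality can be replaced by a sharp pointwise $q$-concavity inequality at each scale, which halves the number of iterations (yielding $12q$ in place of $24q$) and removes the dependence on $\mathrm{C}$. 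The remaining work is a careful bookkeeping of how each application of cotype/concavity inserts its constant and how the dyadic weights $\omega_k$ accumulate, producing the explicit bounds~\eqref{ea.4} and~\eqref{ea.5}.
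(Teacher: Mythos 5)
Your dyadic‐truncation scheme has a fatal problem that is independent of how the weights $\omega_k$ and the exponent $\beta$ are tuned: the map
\[
\Phi(x)_i = \mathrm{sign}(x_i)\,\omega_k\,\|\chi_{A_k(x)}\|_X^{-\beta}\quad\text{for } i\in A_k(x)
\]
is not continuous in $x$, hence cannot be a homeomorphism of balls. When a coordinate $x_i$ crosses a dyadic threshold $2^{-k}$, the level set $A_k(x)$ (and $A_{k-1}(x)$) changes by a whole index, so $\|\chi_{A_k(x)}\|_X$ and the chosen weight jump discontinuously, and so does $\Phi(x)_i$, and indeed so do the other coordinates sharing the same bucket. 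Dyadic/level-set decompositions of this kind are the right tool when one only needs a bi-Lipschitz or Hölder \emph{embedding} (where jump discontinuities do not matter, one just sums up scale-by-scale contributions), but Theorem~\ref{OS-uniform-homeomorphism} requires a genuine \emph{homeomorphism} $B(X)\to B(\ell_2^n)$. Constructing such a map with quantitative Hölder control in both directions is precisely the hard part, and your outline does not address it. The reverse Hölder bound is also not an ``iterative reconstruction through $O(q)$ dyadic scales'': cotype by itself is a one-sided Rademacher inequality and there is no obvious mechanism in your scheme to turn smallness of $\|\Phi(x_1)-\Phi(x_2)\|_{\ell_2}$ into smallness of $\|x_1-x_2\|_X$ coordinate by coordinate.

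For contrast, the paper's argument (following \cite{OS94} and \cite{BL00}) is entirely different and is built so that continuity is automatic. It first passes to the $2$-convexification $X^{(2)}$ via the canonical (Mazur-type) sphere homeomorphism $F_1\colon S(X)\to S(X^{(2)})$, which is a smooth power map on moduli and costs a $\tfrac12$-power in the Hölder exponent. Then it renorms $X^{(2)}$ to a space $Y$ with $M^{(2)}(Y)=M_{(4q)}(Y)=1$ (using Lemma~\ref{la.4} to control the $4q$-concavity constant through the cotype constant $\mathrm{C}$), which is bi-Lipschitz and produces a space with explicit moduli of convexity and smoothness (Lemma~\ref{la.5}): $\delta_Y(\varepsilon)\gtrsim \varepsilon^{4q}$ and $\rho_Y(\tau)\lesssim\tau^2$. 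At this point the key analytic input is the sphere-homeomorphism result of \cite[Lemma~9.5, Proposition~A.5]{BL00}, which gives a homeomorphism $S(Y)\to S(\ell_1^n)$ whose lower modulus is controlled by $\delta_Y^3$ (producing the $12q$ in the exponent) and whose upper modulus by $\rho_Y$; another $2$-convexification sends $S(\ell_1^n)$ to $S(\ell_2^n)$. Finally, \cite[Proposition~2.9]{OS94} upgrades the sphere homeomorphism to a ball homeomorphism, at the cost of halving both Hölder exponents (which turns the $12q$ into $24q$ and the $1/4$ into $1/8$). The power $24q$ in \eqref{ea.4} is thus determined by the modulus of convexity of the renormed $2$-convexification, not by iterating cotype over dyadic scales; under the extra assumption $M_{(q)}(X)=1$ the renorming step is unnecessary and one gets $12q$ directly, which explains part~(ii). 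To salvage your approach you would at minimum need to replace the dyadic cutoff by a smooth interpolation across levels (and then verify bijectivity and continuity of the inverse), and you would still need a genuine mechanism for the lower bound; as written, the construction does not produce a continuous map.
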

Since Theorem \ref{OS-uniform-homeomorphism} is not explicitly isolated in the literature, we shall describe its main
steps focussing, in particular, on the estimates \eqref{ea.4} and \eqref{ea.5}.
(See, also, \cite{ANNRW18b} for quantitative versions of closely related results of Daher \cite{Da95} and Kalton.)
We also note that parts of the argument below follow the
more recent exposition of \cite[Theorem 2.1]{OS94} in \cite[Chapter 9]{BL00}.

We start with a couple of auxiliary lemmas that relate the cotype of a Banach space with other basic geometric invariants.
We recall that if $X$ is a Banach space with an $1$-unconditional basis and $p>1$, then $M^{(p)}(X)$ denotes the $p$-convexity
constant of $X$; see \cite[Definition 1.d.3]{LT79}.
\begin{lemma} \label{la.4}
Let $q\geqslant 2$, and let $X$ be a Banach space with an $1$-unconditional basis that has cotype $q$ with constant $\mathrm{C}\geqslant 1$.
Then, for every $s>q$, the space $X$ is $s$-concave with
\[ M_{(s)}(X) \leqslant (1-e^{-1})^{-2} \, s\, \Bigg( \frac{2q(s-1)}{s-q}\Bigg)^{1-\frac{1}{s}} \, \mathrm{C}. \]
\end{lemma}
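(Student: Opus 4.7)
The plan is to use Maurey's classical decomposition technique, reducing the $s$-concavity inequality to the obvious lower $q$-estimate that cotype gives for disjointly supported vectors, via a geometric truncation of the $x_i$'s into layers relative to their lattice-envelope $(\sum_i |x_i|^s)^{1/s}$.

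First, I would isolate the key consequence of cotype $q$: if $y_1,\dots,y_n\in X$ are pairwise disjointly supported with respect to the unconditional basis, then $\|\sum_i \sigma_i y_i\|_X = \|\sum_i y_i\|_X$ for every choice of signs $\sigma_i\in\{-1,1\}$, so applying the cotype inequality to the Rademacher sum $\sum_i r_i y_i$ immediately yields the \emph{disjoint lower $q$-estimate}
\[ \Big\|\sum_i y_i\Big\|_X^q \;\geqslant\; \mathrm{C}^{-q}\sum_i\|y_i\|_X^q. \]
The whole task then reduces to reducing $s$-concavity for arbitrary vectors to this disjoint case.

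For the reduction, I would fix nonnegative $x_1,\dots,x_m\in X$ (WLOG by unconditionality) and set $y:=\bigl(\sum_i x_i^s\bigr)^{1/s}$ using Krivine's functional calculus in the sublattice generated by the $x_i$'s, noting that $x_i\leqslant y$. For each integer $k\geqslant 0$, define the geometric truncation layer
\[ u_{i,k}\;:=\;\bigl(x_i\wedge e^{-k}y\bigr)-\bigl(x_i\wedge e^{-k-1}y\bigr), \]
which satisfies (a) $x_i=\sum_{k\geqslant 0}u_{i,k}$, (b) $0\leqslant u_{i,k}\leqslant (1-e^{-1})\, e^{-k}y$, and (c) at any point, $u_{i,k}$ is nonzero only for indices $i$ with $x_i>e^{-k-1}y$, of which there are at most $N_k:=\lceil e^{(k+1)s}\rceil$, since $\sum_i (x_i/y)^s=1$ pointwise. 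Thus the family $(u_{i,k})_i$ has pointwise overlap multiplicity at most $N_k$ at every level $k$.

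The third step is to \emph{disjointify each level}: for each $k$, a greedy colouring of the supports partitions $\{1,\dots,m\}$ into $N_k$ classes $A_{k,1},\dots,A_{k,N_k}$ such that $(u_{i,k})_{i\in A_{k,j}}$ is pairwise disjoint for every $j$. The disjoint lower $q$-estimate is then applied inside each $A_{k,j}$ and the pieces are reassembled across $k$ and $j$ via Hölder's inequality, using the envelope bound from (b) to produce a sum over $k$ of geometric-type terms $\sum_k N_k^{1-1/q}e^{-k}$, which converges precisely because $s>q$ (the exponent in the dominant term works out to $-k(s-q)/q$); a short optimisation over a further Hölder split yields the stated factor $\bigl(2q(s-1)/(s-q)\bigr)^{1-1/s}$, and the two geometric losses (one in writing $x_i=\sum_k u_{i,k}$, one in reconstructing $y$ from the layers) contribute the prefactor $(1-e^{-1})^{-2}$, while cotype is used exactly once per disjoint class, accounting for the single power of $\mathrm{C}$.

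The main obstacle will be the bookkeeping in the last step: controlling the interaction between the layer-wise application of the disjoint lower $q$-estimate and the passage from the pointwise bound $\sum_i(x_i/y)^s=1$ to an estimate on $\sum_i\|x_i\|_X^s$. The blow-up as $s\downarrow q$ is intrinsic and must be matched exactly by a careful Hölder; the factor $s$ in front will come out of differentiating the geometric series once with respect to the truncation ratio, and will be the price paid for having taken the natural geometric base $e$ rather than a base optimised in $s$.
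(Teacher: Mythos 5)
Your approach is genuinely different from the paper's. The paper dualizes: it observes that cotype $q$ gives $X$ the ``lower'' estimate condition of \cite[Definition 1.f.4]{LT79}, passes to $X^*$ via \cite[Proposition 1.f.5]{LT79}, deduces $s'$-convexity of $X^*$ from \cite[Theorem 1.f.7, Lemma 1.f.8]{LT79} (keeping track of constants), and concludes by duality of convexity and concavity. You instead propose a self-contained, direct Maurey-type layer decomposition: reduce cotype to a disjoint lower $q$-estimate, truncate $x_i$ geometrically against the envelope $y=(\sum x_i^s)^{1/s}$, and recombine. The first observation (cotype plus $1$-unconditionality yields the disjoint lower $q$-estimate) is correct and is exactly the role cotype plays here; and your reverse-engineering of the stated constant matches the algebra one gets from the paper's duality route ($s'/(s'-1)=s$ and $2p/(p-s')=2q(s-1)/(s-q)$, $1/s'=1-1/s$), so the target is right.

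However there is a genuine gap in the disjointification step. You claim that because the pointwise overlap multiplicity of the supports of $(u_{i,k})_i$ at level $k$ is at most $N_k$, a greedy colouring partitions $\{1,\dots,m\}$ into $N_k$ classes with pairwise disjoint supports within each class. That is false: bounded simultaneous overlap does not bound the chromatic number of the pairwise-intersection graph. Already three sets $\{1,2\},\{2,3\},\{1,3\}$ on the ground set $\{1,2,3\}$ have every point covered exactly twice, yet the intersection graph is a triangle and needs three colours; with projective-plane configurations the discrepancy is unbounded. So as stated the ``disjointify each level'' step does not produce $N_k$ disjoint classes, and the rest of the Hölder bookkeeping that relies on applying the disjoint lower $q$-estimate class-by-class does not go through.

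To salvage the Maurey route one must avoid coordinate-colouring. The standard device is to apply cotype directly to the Rademacher sum $\sum_i r_i u_{i,k}$ (no disjointification needed), and to control $\big\|\sum_i r_i u_{i,k}\big\|_X$ from above pointwise via the envelope bound $|\sum_i r_i u_{i,k}|\leqslant \sum_i u_{i,k}$, using the overlap bound $N_k$ only to estimate $\sum_i u_{i,k}$ pointwise against $y$ --- not to disjointify. Getting from the resulting level-$k$ inequalities back to $\sum_i\|x_i\|_X^s$ still requires a careful Hölder split in $k$ (this is where $s>q$ and the stated constant emerge), and making this match the clean prefactor $(1-e^{-1})^{-2}\,s\,(2q(s-1)/(s-q))^{1-1/s}$ is exactly the bookkeeping deferred to \cite[Lemma 1.f.8]{LT79} in the paper. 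So: the approach can be made to work and would be more self-contained than the paper's cite-and-dualize route, but the greedy colouring step must be replaced before the proposal counts as a proof.
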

\begin{proof}
We first observe that, since $X$ has an $1$-unconditional basis and cotype $q$ with constant $\mathrm{C}$, the space $X$ satisfies
property ($\substack{\ast \\ \ast}$) in \cite[Definition 1.f.4]{LT79} with the same $q$ and constant $M=\mathrm{C}$. Thus,
setting $p:=\frac{q}{q-1}$ to be the conjugate exponent of $q$, by \cite[Proposition 1.f.5]{LT79}, the dual
$X^*$ of $X$ satisfies property ($\ast$) in \cite[Definition 1.f.4]{LT79} with $p$ and constant $M=\mathrm{C}$.
Fix any $s>q$, let $s'$ denote the conjugate exponent of $s$, and notice that $1<s'<p$. By the previous
discussion, \cite[Theorem 1.f.7]{LT79} and after keeping track of the constants in \cite[Lemma 1.f.8]{LT79},
we obtain that the space $X^*$ is $s'$-convex with
$M^{(s')}(X^*) \leqslant (1-e^{-1})^{-2} \, \frac{s'}{s'-1}\, \big( \frac{2p}{p-s'}\big)^{1/s'} \, \mathrm{C}$.
The proof is completed by combining the previous estimate with \cite[Proposition 1.d.4]{LT79}.
\end{proof}
For the next lemma we recall that, for any Banach space $X$, $\delta_X(\varepsilon)$ denotes the modulus of convexity
of $X$ at $0<\varepsilon \leqslant 2$, while $\rho_X(\tau)$ denotes the modulus of uniformly smoothness of $X$ at $\tau>0$;
see \cite[Definition 1.e.1]{LT79}.
\begin{lemma}\label{la.5}
Let $q\geqslant 2$, and let $Y$ be a Banach space with an $1$-unconditional basis such that $M^{(2)}(Y)=M_{(q)}(Y)=1$.
Then, for every $0<\varepsilon \leqslant 2$ and every $\tau>0$, we have
\begin{equation} \label{ea.7}
\delta_Y(\varepsilon) \geqslant \frac{\varepsilon^q}{q\, 4^q} \ \ \ \text{ and } \ \ \
\rho_Y(\tau) \leqslant 2\tau^2.
\end{equation}
\end{lemma}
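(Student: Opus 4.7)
The plan is to combine the two lattice hypotheses $M^{(2)}(Y)=1$ and $M_{(q)}(Y)=1$ with two elementary pointwise facts for real scalars: the parallelogram identity $|a+b|^2+|a-b|^2=2(|a|^2+|b|^2)$, and the comparison $(|a|^q+|b|^q)^{1/q}\leqslant(|a|^2+|b|^2)^{1/2}$, valid for $q\geqslant 2$ and coming from the contraction $\ell_q^2\supseteq\ell_2^2$.

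For the modulus of convexity, I will fix $x,y\in Y$ with $\|x\|,\|y\|\leqslant 1$ and $\|x-y\|\geqslant\varepsilon$, and apply $q$-concavity with constant $1$ to the pair $u:=\frac{x+y}{2}$, $v:=\frac{x-y}{2}$ to obtain $\|u\|^q+\|v\|^q\leqslant \|(|u|^q+|v|^q)^{1/q}\|^q$. Chaining the pointwise $\ell_q\hookrightarrow \ell_2$ inequality with the parallelogram identity $|u|^2+|v|^2=(|x|^2+|y|^2)/2$, monotonicity of the lattice norm, and then $2$-convexity with constant $1$, the right-hand side is bounded above by $\|((|x|^2+|y|^2)/2)^{1/2}\|^q\leqslant ((\|x\|^2+\|y\|^2)/2)^{q/2}\leqslant 1$. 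Since $\|v\|^q\geqslant (\varepsilon/2)^q$, this yields $\|u\|^q\leqslant 1-\varepsilon^q/2^q$, and the elementary estimate $(1-t)^{1/q}\leqslant 1-t/q$ for $t\in[0,1]$ then gives $\delta_Y(\varepsilon)\geqslant \varepsilon^q/(q\cdot 2^q)\geqslant \varepsilon^q/(q\cdot 4^q)$, as required.

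For the modulus of smoothness, the strategy is to first establish the parallelogram-type bound $\|x+\tau y\|^2+\|x-\tau y\|^2\leqslant 2(1+\tau^2)$ whenever $\|x\|=\|y\|=1$; once this is in hand, the Cauchy--Schwarz-type estimate $\|x+\tau y\|+\|x-\tau y\|\leqslant\sqrt{2}\cdot\sqrt{\|x+\tau y\|^2+\|x-\tau y\|^2}\leqslant 2\sqrt{1+\tau^2}\leqslant 2+\tau^2$ yields the target bound $\rho_Y(\tau)\leqslant \tau^2/2\leqslant 2\tau^2$. Starting from the pointwise identity $|x+\tau y|^2+|x-\tau y|^2=2(|x|^2+\tau^2|y|^2)$, $2$-convexity with constant $1$ immediately gives the lattice-norm estimate $\|(|x+\tau y|^2+|x-\tau y|^2)^{1/2}\|^2\leqslant 2(1+\tau^2)$.

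The main obstacle lies in upgrading this lattice-norm estimate to the scalar estimate $\|x+\tau y\|^2+\|x-\tau y\|^2\leqslant 2(1+\tau^2)$: such an upgrade does not follow from $2$-convexity alone, since in general it is essentially equivalent to $2$-concavity with constant $1$, a hypothesis the space need not satisfy. This is exactly where the $q$-concavity of $Y$ should enter; combining it with the pointwise $\ell_q\hookrightarrow\ell_2$ inequality applied to $x+\tau y$ and $x-\tau y$ gives control of $\|x+\tau y\|^q+\|x-\tau y\|^q$ in terms of $\|(|x+\tau y|^2+|x-\tau y|^2)^{1/2}\|^q$, and combining this with the triangle-inequality lower bound $\|x+\tau y\|+\|x-\tau y\|\geqslant \|2x\|=2$ and a suitable power-mean interpolation will close the gap to the desired second-moment inequality.
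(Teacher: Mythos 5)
Your direct Clarkson-type argument for the modulus of convexity is correct, and it even yields the slightly sharper constant $\delta_Y(\varepsilon)\geqslant \varepsilon^q/(q\cdot 2^q)$. It is a nice self-contained replacement for the paper's citation of Lindenstrauss--Tzafriri (Theorem~1.f.1 and Lemma~1.f.2).

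The modulus-of-smoothness half has a genuine gap, and you flag it yourself. The proposed fix --- running $q$-concavity to get $\|x+\tau y\|^q+\|x-\tau y\|^q\leqslant\bigl(2(1+\tau^2)\bigr)^{q/2}$ and then interpolating against the trivial lower bound $\|x+\tau y\|+\|x-\tau y\|\geqslant 2$ --- cannot work for $q>2$. Write $a=\|x+\tau y\|$ and $b=\|x-\tau y\|$. The constraint $a^q+b^q\leqslant\bigl(2(1+\tau^2)\bigr)^{q/2}$ is maximized on the sum $a+b$ at the diagonal $a=b$, where it allows $a+b=2^{3/2-1/q}\sqrt{1+\tau^2}$; for every fixed $q>2$ this exceeds $2$ by a margin bounded below independently of $\tau$, so no bound of the form $\rho_Y(\tau)=O(\tau^2)$ as $\tau\to 0$ can be extracted this way. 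The lower bound $a+b\geqslant 2$ is one-sided and does not prune the extremal configuration $a=b$, so it cannot repair the estimate. In short, $q$-concavity for large $q$ is simply too weak a constraint, near the diagonal, to control the modulus of smoothness directly, and no power-mean manipulation of these two scalar inequalities can recover it.

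The paper's proof sidesteps this entirely by dualizing. By Lindenstrauss--Tzafriri Proposition~1.d.4, the hypothesis $M^{(2)}(Y)=1$ translates to $M_{(2)}(Y^*)=1$, i.e.\ $Y^*$ is $2$-concave with constant $1$ (and $M_{(q)}(Y)=1$ gives $M^{(q')}(Y^*)=1$ with $q'=q/(q-1)$). The modulus-of-convexity estimate of the first half, applied to $Y^*$ with exponent $2$, gives $\delta_{Y^*}(\varepsilon)\geqslant \varepsilon^2/32$, and Lindenstrauss' duality formula (Proposition~1.e.2), $\rho_Y(\tau)=\sup_{0\leqslant\varepsilon\leqslant 2}\{\tau\varepsilon/2-\delta_{Y^*}(\varepsilon)\}$, then yields $\rho_Y(\tau)\leqslant 2\tau^2$ by maximizing the quadratic $\tau\varepsilon/2-\varepsilon^2/32$ over $\varepsilon$. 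So the missing ingredient in your argument is precisely this passage to the dual: the $2$-convexity hypothesis on $Y$ should be used not inside $Y$ but as $2$-concavity of $Y^*$, after which the smoothness bound is downstream of the convexity bound you already proved.
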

\begin{proof}
The estimate on $\delta_Y(\varepsilon)$ in \eqref{ea.7} follows from the assumption that $M_{(q)}(Y)=1$
and \cite[Theorem 1.f.1]{LT79}, after noticing that in \cite[Lemma 1.f.2]{LT79} we can take $C(2,q)=4$.
For the lower bound on $\rho_Y(\tau)$ we observe that, by \cite[Proposition 1.d.4]{LT79},
we have $M_{(2)}(Y^*)=M^{(2)}(Y)=1$. Thus, by the previous discussion, we obtain that
$\delta_{Y^*}(\varepsilon)\geqslant \frac{\varepsilon^2}{32}$ for every $0<\varepsilon \leqslant 2$.
Finally, by \cite[Proposition 1.e.2]{LT79}, we have
$\rho_Y(\tau)=\sup\{\tau\varepsilon/2-\delta_{Y^*}(\varepsilon)\colon 0\leqslant \varepsilon\leqslant 2\}$ for any $\tau>0$,
which is easily seen to imply that $\rho_Y(\tau) \leqslant 2\tau^2$.
\end{proof}
After these preliminary results, we are ready to begin the sketch of the proof of Theorem \ref{OS-uniform-homeomorphism}.
We shall only discuss the proof of part (i); the proof of part (ii) is actually simpler since Step 2 below is not needed.
So, in what follows, we fix $q\geqslant 2$ and an $n$-dimensional normed space $X$ with an $1$-unconditional basis that has
cotype $q$ with constant $\mathrm{C}\geqslant 1$.
\medskip

\noindent \textbf{Step 1: passing to the $2$-convexification.} Let $X^{(2)}$ denote the $2$-convexification of $X$; we refer to
\cite[page 53]{LT79} for the definition of this construction, and we recall at this point that~$X^{(2)}$ also has
an $1$-unconditional basis. By \cite[Proposition 9.3]{BL00} (and after keeping track of the constants in its proof),
there exists a homeomorphism $F_1\colon S(X)\to S(X^{(2)})$ such that for every $x_1,x_2\in S(X)$ we have
$\frac{1}{2}\|x_1-x_2\|_X\leqslant \|F_1(x_1)-F_1(x_2)\|_{X^{(2)}} \leqslant \sqrt{2} \|x_1-x_2\|_X^{1/2}$.
\medskip

\noindent \textbf{Step 2: renorming of $X^{(2)}$.} By \cite[pages 53 and 54]{LT79}, we have
$M^{(2)}(X^{(2)})=1$ and $M_{(2s)}(X^{(2)})\leqslant \sqrt{M_{(s)}(X)}$ for every $s>q$; consequently,
by Lemma \ref{la.4},
\[ M:=M_{(4q)}(X^{(2)})\leqslant \sqrt{M_{(2q)}(X)}\leqslant \frac{2\sqrt{2}}{1-e^{-1}}\, q\, \sqrt{\mathrm{C}}. \]
(In particular, if $M_{(q)}(X)=1$, then $M_{(2q)}(X^{(2)})=1$.)
By \cite[pages 184 and 185]{FJ74}, there exists an equivalent norm $\seminorm{\cdot}$ on $X^{(2)}$
such that the space $Y:=(X^{(2)},\seminorm{\cdot})$ also has an $1$-unconditional basis
and it satisfies $M^{(2)}(Y)=M_{(4q)}(Y)=1$. Moreover, we have
$\|x\|_{X^{(2)}} \leqslant \seminorm{x} \leqslant M \|x\|_{X^{(2)}}$ for every $x\in X^{(2)}$ and, consequently,
there exists a homeomorphism $F_2\colon S(X^{(2)})\to S(Y)$ such that
$\frac{1}{2M}\|x_1-x_2\|_{X^{(2)}} \leqslant \seminorm{F_2(x_1)-F_2(x_2)} \leqslant 2M \|x_1-x_2\|_{X^{(2)}}$
for every $x_1,x_2\in S(X^{(2)})$.
\medskip

\noindent \textbf{Step 3: homeomorphism of spheres.} Since $M^{(2)}(Y)=M_{(4q)}(Y)=1$, by Lemma \ref{la.5}, we have that
$\delta_Y(\varepsilon) \geqslant \frac{\varepsilon^{4q}}{q\, 4^{4q+1}}$ for every $0< \varepsilon \leqslant 2$ and
$\rho_Y(\tau) \leqslant 2\tau^2$ for every $\tau>0$. On the other hand, by \cite[Lemma 9.5 and Proposition A.5]{BL00},
there exists a homeomorphism $F_3\colon S(Y)\to S(\ell^n_1)$ such that
$\frac{1}{8} \delta_Y\big(\seminorm{y_1-y_2}\big)^3 \leqslant \|F_3(y_1)-F_3(y_2)\|_{\ell^n_1} \leqslant
\frac{\rho_Y(2\seminorm{y_1-y_2})}{\seminorm{y_1-y_2}} + \seminorm{y_1-y_2}$ for every $y_1,y_2\in S(Y)$.
Next observe that the $2$-convexification $(\ell^n_1)^{(2)}$ of $\ell^n_1$ is isometric to $\ell^n_2$; thus, applying
\cite[Proposition~9.3]{BL00} as in Step 1, we obtain a homeomorphism $F_4\colon S(\ell^n_1)\to S(\ell^n_2)$ such that
for every $z_1,z_2\in S(\ell^n_1)$ we have
$\frac{1}{2}\|z_1-z_2\|_{\ell^n_1}\leqslant \|F_4(z_1)-F_4(z_2)\|_{\ell^n_2} \leqslant \sqrt{2} \|z_1-z_2\|_{\ell^n_1}^{1/2}$.

Summing up, setting $\phi:=F_4\circ F_3 \circ F_2\circ F_1$,
we see that $\phi\colon S(X)\to S(\ell^n_2)$ is a homeomorphism that satisfies
$\frac{\|x_1-x_2\|_X^{12q}}{q^3\, 2^{60q+9}\, M^{12q}} \leqslant
\|\phi(x_1) -\phi(x_2)\|_{\ell_2^n} \leqslant 2\sqrt{5}\sqrt[4]{2} \sqrt{M} \|x_1-x_2\|_X^{1/4}$
for every $x_1,x_2\in S(X)$.\!
\medskip

\noindent \textbf{Step 4: homeomorphism of balls.} By \cite[Proposition 2.9]{OS94} (and its proof), we have that
if $f\colon S(E)\to S(Z)$ is a homeomorphism between the spheres of two Banach space $E$ and $Z$
that satisfies $\alpha\big(\|e-w\|_E\big)\leqslant \|f(e)-f(w)\|_Z \leqslant \beta\big(\|e-w\|_E\big)$ for every $e,w\in S(E)$,
then there exists a homeomorphism $F\colon B(E)\to B(Z)$ such that for every $e_1,e_2\in B(E)$,
\begin{enumerate}
\item[$\bullet$] $\|F(e_1)-F(e_2)\|_Z \leqslant \|e_1-e_2\|_E + \max\Big\{ 2\|e_1-e_2\|_E^{1/4},
\beta\big( 2\|e_1-e_2\|^{1/2}_E\big)\Big\}$, and
\item[$\bullet$] $\|e_1-e_2\|_E \leqslant \|F(e_1)-F(e_2)\|_Z + \max\Big\{ 2\|F(e_1)-F(e_2)\|_Z^{1/4},
\alpha^{-1}\big( 2\|F(e_1)-F(e_2)\|_Z^{1/2}\big)\Big\}$.
\end{enumerate}
The proof of Theorem \ref{OS-uniform-homeomorphism} is then completed
by applying this result to the homeomorphism $\phi$ obtained in Step 3.

\subsection*{Acknowledgments}

The research was supported in the framework of H.F.R.I call ``Basic research Financing
(Horizontal support of all Sciences)" under the National Recovery and Resilience Plan ``Greece 2.0"
funded by the European Union--NextGenerationEU (H.F.R.I. Project Number: 15866).
The third named author (K.T.) is partially supported by the NSF grant DMS 2331037.

\end{document}